\documentclass[10pt,preprint,english]{elsarticle}

\usepackage[english]{babel}
\usepackage[T1]{fontenc}
\usepackage[utf8]{inputenc}

\addtolength{\oddsidemargin}{-.875in}
	\addtolength{\evensidemargin}{-.875in}
	\addtolength{\textwidth}{1.45in}

	\addtolength{\topmargin}{-.175in}
	\addtolength{\textheight}{.2 in}

\usepackage[left=2cm,right=2cm,top=2cm,bottom=2cm]{geometry}

\usepackage{lipsum}
\usepackage{setspace}

\usepackage{amsfonts}
\usepackage{amsthm}
\usepackage{latexsym}
\usepackage{hyperref}
\usepackage{amssymb, amsmath}
\usepackage{enumitem}

\usepackage{pdfsync}
\usepackage{epstopdf}
\usepackage{hyperref}

\numberwithin{equation}{section}

\usepackage{xcolor}
\usepackage{color}

\makeatletter

\makeatother
\def\R{\mathbb{R}}

\def\R{{\mathbb R}}


\newtheorem{defi}{Definition}[section]
\newtheorem{theorem}{Theorem}[section]
\newtheorem{prop}{Proposition}[section]

\newtheorem{lemma}{Lemma}[section]
\newtheorem{remark}{Remark}[section]


\newcommand{\red}{\textcolor{black}}

\newcommand{\QEDA}{\null\nobreak\hfill\ensuremath{\blacksquare}}%
\newcommand{\QEDB}{\null\nobreak\hfill\ensuremath{\blacksquare}}%

\setlength{\parindent}{-.5pt}

\begin{document}

\begin{frontmatter}

\title{Multiplicity results for a subcritical Hamiltonian system with concave-convex nonlinearities}

\author[KMA]{Oscar Agudelo\corref{cor1}}
\ead{oiagudel@kma.zcu.cz}
\cortext[cor1]{Corresponding author}

\author[UNIMI]{Bernhard Ruf}
\ead{bernhard.ruf@unimi.it}

\author[UNAL]{Carlos V\'{e}lez}
\ead{cauvelez@unal.edu.co}

\address[KMA]{Department of Mathematics and NTIS, Faculty of Applied Sciences, University of West Bohemia in Pilsen
\\
Univerzitn\'{i} 22, 306 14 Plze\v{n}, Czech Republic.}

\address[UNIMI]{Dipartimento di Matematica, Universit\'{a} di Milano
via C. Saldini 50, Milano 20133, Italy.}

\address[UNAL]{Departamento de Matem\'{a}ticas, Universidad Nacional de Colombia Sede Medell\'{i}n, Medell\'{i}n, Colombia.}

\numberwithin{equation}{section}

\begin{abstract}
We study  the {\it Hamiltonian elliptic system}  
\begin{eqnarray}\label{HS1-abstract} 
\left\{
\begin{aligned}
-\Delta u  & = \lambda |v|^{r-1}v +|v|^{p-1}v \qquad &\hbox{in} \ \ \Omega ,\\
-\Delta v   & = \mu |u|^{s-1}u +|u|^{q-1}u \qquad &\hbox{in} \ \ \Omega ,\\
u &>0, \ v>0 \qquad \, &\hbox{in} \ \ \Omega ,\\
u &=v = 0 \qquad \quad &\hbox{on} \quad \partial \Omega,
\end{aligned}
\right.
\end{eqnarray}
where $\Omega \subset \mathbb {R}^N$ is a smooth bounded domain,  $\lambda$ and $ \mu $ are nonnegative parameters and $r,s,p,q>0$.
Our study includes the case in which the nonlinearities in \eqref{HS1-abstract} are concave near the origin and convex near infinity, and we focus on the region of non-negative {\it pairs of parameters} \red{$(\lambda,\mu)$} that guarantee exis\-tence and multiplicity of solutions of \eqref{HS1-abstract}. \red{In particular, we show the existence of a strictly decreasing curve $\lambda_*(\mu)$ on an interval $[0, \mu]$ with $\lambda_*(0)> 0, \lambda_*(\mu) = 0$ and such that the system has two solutions for $(\lambda,\mu)$ below the curve, one solution for $(\lambda, \mu)$ on the curve and no solution for $(\lambda, \mu)$ above the curve. A similar statement holds reversing $\lambda$ and $\mu$.} This work is motivated by some of the results from \cite{AMBROSSETIBREZISCERAMI1994},
 \cite{dosSantos2009-2} and \cite{AGUDELORUFVELEZ2023}. 
\end{abstract}

\medskip

\begin{keyword}
Hamiltonian system \sep Multiplicity of solutions \sep Concave-convex nonlinearities \sep Minimization methods \sep Mountain pass \sep regularity of solutions \sep Ambrosetti-Brezis-Cerami type problems. 
\end{keyword}


\end{frontmatter}

\section{Introduction}

Let us consider the elliptic system 
\begin{eqnarray}\label{HS1} 
\left\{
\begin{aligned}
-\Delta u  & = \lambda |v|^{r-1}v +|v|^{p-1}v \qquad &\hbox{in} \ \ \Omega ,\\
-\Delta v   & = \mu |u|^{s-1}u +|u|^{q-1}u \qquad &\hbox{in} \ \ \Omega ,\\
u &>0, \ v>0 \qquad \, &\hbox{in} \ \ \Omega ,\\
u &=v = 0 \qquad \quad &\hbox{on} \quad \partial \Omega,
\end{aligned}
\right.
\end{eqnarray}
{where} $\Omega \subset \mathbb {R}^N$ is a smooth bounded domain,  $N \geq 2$, $\Delta$ is the Laplace operator,  $\lambda$ and $ \mu $ are nonnegative parameters and $r,s,p,q\in(0,\infty)$.

\vskip 6pt
A particular case of our result concerns the situation where both nonlinearities
are concave near zero and convex for large values, that is
\begin{equation}\tag{A0}\label{hyp:concconv}
0 < s, r < 1 \quad \hbox{and}\quad 1 < p, q\quad \hbox{with}\quad  \frac{1}{p + 1} + \frac{1}{
q + 1} > 1 -\frac{2}{N}.    
\end{equation}

The second condition says that the asymptotic growth of the nonlinearities
is superlinear and subcritical (with respect to the {\it ”critical hyperbola”}). If
these exponents are fixed, then the solvability of \eqref{HS1} depends on the parameters $\lambda$ and $\mu$. Indeed, we show

\begin{theorem}\label{theo:Esp-caseMainTheo}
Assume \eqref{hyp:concconv} holds true and that $qr<1$ and $ps<1$. {There exists a continuous and strictly decreasing function $\lambda_*: [0, \bar{\mu}] \to [0, a]$ with
$\lambda_*(0)= a > 0$, and $\lambda_*(\bar{\mu})=0$} such that:
\begin{itemize}
    \item if $\lambda\in (0, \lambda_*(\mu))$, then the system \eqref{HS1}has at least two positive solutions;
    \item if $\lambda=\lambda_*(\mu)$, then the system has at least one solution;
\item if $\lambda>\lambda_*(\mu)$, then the system has no solution.
\end{itemize}

A similar statement holds reversing $\lambda$ and $\mu$.
\end{theorem}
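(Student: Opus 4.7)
I would begin by introducing the set
$$\Sigma \;=\; \bigl\{(\lambda,\mu)\in[0,\infty)^{2} \;:\; \eqref{HS1} \text{ admits a positive solution}\bigr\},$$
and collect three basic facts about it. First, $\Sigma$ contains a neighbourhood of the origin: for small parameters, the standard local-minimization argument for concave-convex problems (which is presumably the content of earlier sections, building on \cite{AMBROSSETIBREZISCERAMI1994}) produces a positive solution that is a local minimum of the associated dual/reduced variational functional in the subcritical Hamiltonian framework determined by $\frac{1}{p+1}+\frac{1}{q+1}>1-\frac{2}{N}$. Second, $\Sigma$ is bounded: testing both equations against the first eigenfunction $\varphi_1$ of $-\Delta$ on $\Omega$ and combining the two resulting inequalities (using $p,q>1$ together with $qr<1$ and $ps<1$ to dominate the concave contribution by the convex one) produces a non-existence argument of Pohozaev-type for large $(\lambda,\mu)$. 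Third, and most importantly, I would prove a monotonicity property: if $(\lambda_0,\mu_0)\in\Sigma$ with solution $(u_0,v_0)$ and $0\le\lambda\le\lambda_0,\ 0\le\mu\le\mu_0$, then $(\lambda,\mu)\in\Sigma$. This uses $(u_0,v_0)$ as a super-solution together with a small sub-solution constructed from the local minimizer at small parameters, combined with the monotone iteration scheme adapted to the Hamiltonian (cooperative) structure.

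\textbf{The curve $\lambda_*(\mu)$ and its properties.} Given the monotonicity of $\Sigma$, I define
$$\lambda_{*}(\mu)\;=\;\sup\{\lambda\ge 0 \;:\; (\lambda,\mu)\in\Sigma\}, \qquad \bar{\mu}\;=\;\sup\{\mu\ge 0 \;:\; (0,\mu)\in\Sigma\},$$
so that $\lambda_*:[0,\bar\mu]\to[0,a]$ with $a:=\lambda_*(0)>0$ and $\lambda_*(\bar\mu)=0$ by construction. Monotonicity of $\Sigma$ gives that $\lambda_*$ is non-increasing. Strict decrease follows from a strict super-solution argument: if $(\lambda_0,\mu_0)$ is in the interior of $\Sigma$ with solution $(u_0,v_0)$, the strong maximum principle shows $(u_0,v_0)$ is a \emph{strict} super-solution for any $(\lambda,\mu)$ with $\lambda\le\lambda_0,\ \mu\le\mu_0$ and at least one inequality strict, so one can trade a small increase in $\mu$ for a small increase in $\lambda$ and stay in $\Sigma$. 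Continuity is shown from both sides: left-continuity follows from the openness of $\Sigma^{\circ}$ obtained via the implicit function theorem / sub-super-solution openness, while right-continuity follows by taking $\mu_n\downarrow\mu$ with $\lambda_n$ close to $\lambda_*(\mu_n)$, extracting a convergent subsequence of solutions, and verifying the limit solves the system at the limit parameters.

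\textbf{Existence on and multiplicity below the curve.} For existence at $\lambda=\lambda_*(\mu)$, I would take a sequence $\lambda_n\uparrow\lambda_*(\mu)$ with minimal (local-minimum) solutions $(u_n,v_n)$, establish uniform bounds in the correct fractional Sobolev norms (which is possible because the local-minimum energy stays bounded as $\lambda_n\uparrow\lambda_*(\mu)$), and pass to the limit using the subcritical Sobolev embeddings provided by the hyperbola condition. Strict positivity of the limit is secured by the concave terms $\lambda v^{r}$ and $\mu u^{s}$ via the strong maximum principle, preventing the limit from being trivial. For the second solution when $\lambda\in(0,\lambda_*(\mu))$, I would apply a mountain-pass argument around the local minimum: the superlinear growth with $p,q>1$ yields the mountain-pass geometry, and the Palais–Smale / Cerami condition has to be verified for the Hamiltonian functional at the mountain-pass level.

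\textbf{Main obstacle.} The most delicate step is the compactness analysis for the mountain-pass scheme in the Hamiltonian setting and the uniform a priori estimates required to pass to the limit on the curve. Unlike the scalar Ambrosetti–Brezis–Cerami problem, here the natural functional lives in an asymmetric product space, and the subcritical condition $\frac{1}{p+1}+\frac{1}{q+1}>1-\frac{2}{N}$ only just guarantees the embeddings needed for compactness; one must rule out loss of mass via careful Brezis–Lieb type decompositions together with the technical conditions $qr<1$ and $ps<1$, which are precisely what is needed to absorb the concave terms into the superlinear ones in the energy estimates. Once that compactness is secured, the structure theorem follows by combining it with the monotonicity and continuity arguments above.
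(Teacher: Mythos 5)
Your outline follows the same broad strategy as the paper (the solution region $S$, its monotonicity via sub/supersolutions, the extremal curve $\lambda_*(\mu)=\sup\{\lambda:(\lambda,\mu)\in S\}$, minimal solutions plus a second critical point), but it is missing the ingredient that makes the multiplicity part work for \emph{every} $\lambda\in(0,\lambda_*(\mu))$, namely a concrete variational framework in which ``local minimum $+$ mountain pass'' is even meaningful. The naive Hamiltonian functional on a product space is strongly indefinite and has no local minima; the paper instead performs the reduction by inversion to the fourth–order quasilinear problem \eqref{4thorderBVP}--\eqref{eqn:navier_bd_cond} and works with $J_{\mu,\lambda}$ on $W$, and then the local minimum below the curve is produced by truncating $f_+$ between an explicit subsolution (scaled solutions of the auxiliary sublinear problem \eqref{4thorderBVP_sublinear}) and the supersolution coming from a larger admissible pair, followed by the Brezis--Nirenberg type Theorem \ref{theo:local_min} ($C^2_0$ versus $W$ local minimizers), which in turn rests on the regularity Theorem \ref{theo:Theorem_1}; the second solution is then obtained from the Pucci--Serrin/mountain–pass alternative for the truncated-from-below functional. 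Your proposal only produces two solutions for small parameters and never explains where a local minimum comes from for general $(\lambda,\mu)$ below the curve. Also, the compactness difficulty you single out (Brezis--Lieb mass loss) is off target: the problem is strictly subcritical, so the embeddings are compact; the actual work in the $(PS)$ condition is the strong monotonicity of $\psi(\mu,\cdot)$ (Lemma \ref{technicalPSineq}), which uses $r<1/q$, available here since $qr<1$.

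Two further steps fail as written. First, your continuity argument has the two one–sided proofs swapped: extracting limits of solutions along $\mu_n\downarrow\mu$ only shows $(\lim\lambda_*(\mu_n),\mu)\in S$, i.e.\ $\lambda_*(\mu)\geq\lim\lambda_*(\mu_n)$, which already follows from monotonicity, so it cannot give right–continuity. In the paper, right–continuity (flagged as the ``less obvious'' side) is proved with exactly the supersolution–perturbation device you invoke for strict decrease (from a solution at $(\lambda_*(\mu),\mu)$ one builds a supersolution for $(\lambda_*(\mu)-\epsilon,\mu+\delta)$, using the inequality $|x^m-y^m|\leq C_m|x-y|^m$ and Hopf's lemma), whereas the compactness/limit argument is what gives left–continuity and existence on the curve (as in Lemma \ref{lemma:theo1.4_VII}, where the uniform energy bounds come from comparison with the fixed subsolution). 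Second, nontriviality of the limit on the curve is not ``secured by the strong maximum principle'': the limit could a priori be identically zero, and the paper excludes this by the uniform lower barrier $w\geq\widehat{\lambda}^{\,q/(1-qr)}\omega>0$ inherited from the ordered subsolution family (Lemma \ref{lemma:theo1.4_I}). Finally, the eigenfunction/Poho\v{z}aev-type test only rules out pairs with $\mu^{\frac{q-1}{q-s}}\lambda^{\frac{p-1}{p-r}}$ large and says nothing on the axis $\lambda=0$, so it does not show $\bar{\mu}<\infty$; the paper imports that from the nonexistence results of \cite{AGUDELORUFVELEZ2023} and then proves Theorem \ref{theo:Esp-caseMainTheo} by a symmetry argument: since $ps<1$ as well, one defines $\mu_*(\lambda)$ by exchanging the roles of the two equations, shows $\mu_*$ is the inverse of $\lambda_*$, and deduces $\lambda_*(\mu)\to 0$ as $\mu\to\bar{\mu}$ from the fact that neither extremal function can be constant on an open interval.
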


The approach to this problem is as follows: we invert the (odd and monotone increasing) nonlinearity in the second equation, and obtain $u$ in terms of $\Delta v$,
that is $-\psi(\mu,\Delta v)$. Inserting this into the first equation in \eqref{HS1} we obtain a fourth order quasilinear operator, which behaves as a superlinear $\frac{1}{s}-$bi-Laplacian
near zero and like a sublinear $\frac{1}{q}$-bi-Laplacian near infinity  (see \eqref{4thorderBVP} on page \pageref{pgref:psi}). The Dirichlet boundary conditions in \eqref{HS1} are transformed into Navier boundary conditions for $v$ and hence we consider this
operator in a closed subspace of $W^{2, \frac{q+1}{q}}(\Omega)$. The equation we then study is
$$
\Delta \psi (\mu,\Delta v)=\lambda |v|^{r-1}v + |v|^{p-1}v.
$$ 

We show various properties and regularity results for such equations, which will be of interest also to other problems with such a setting. The existence and multiplicity results are obtained via variational methods for the energy functional associated to this fourth order quasilinear equation. We now give a detailed review of some of the known results for such systems, and state our result in more generality (see Theorem \ref{theo:bifur_diagram} below).

\vskip 6pt

 If $\mu=\lambda$, $r=s$ and $p=q$, one may look for solutions $(u,v)$ of \eqref{HS1} with $u=v$ in $\Omega$. In this case, system \eqref{HS1} reduces to the scalar boundary value problem ({BVP}) \begin{equation}\label{eq:Ambr-Brezis-Cerami}
-\Delta v = \lambda |v|^{r-1}v + |v|^{p-1}v, \quad  v>0 \quad \hbox{in} \quad \Omega, \qquad v=0 \quad \hbox{on} \quad \partial \Omega. 
\end{equation} 

 In \cite{BREZISNIRENBERG1983}, Brezis and Nirenberg  studied the threshold values of $\lambda$ that separate exis\-tence and non-existence of solutions of the BVP \eqref{eq:Ambr-Brezis-Cerami} in the case $r=1$, $N\geq 3$ and $p=\frac{N+2}{N-2}$.   Ambrosetti, Brezis and Cerami in \cite{AMBROSSETIBREZISCERAMI1994} did the corres\-ponding analysis for \eqref{eq:Ambr-Brezis-Cerami} in the case $r\in(0,1)$, $N\geq 3$ and $1<p \leq \frac{N+2}{N-2}$, that is, when the nonlinearity is concave near the origin and convex for large values of $v$. 

\vskip 6pt

 In the case $r=s=1$ and $p\neq q$, Van der Vorst in \cite{Vandervorst1992} and Mitidieri in \cite{Mitidieri1993} used generalized versions of the Poho\-\v{z}aev identity (see also \cite{Pohozaev1965}) to prove that in star-shaped domains existence of solutions of \eqref{HS1} is only possible if
\begin{equation}\label{cond:crit_hyperbola}
\frac{1}{p+1}+\frac{1}{q+1}\geq 1 -\frac{2}{N}.
\end{equation}  

\vskip 6pt

  In what follows $\lambda_1>0$ denotes the principal eigenvalue of $-\Delta$ in $H_0^1(\Omega)$. 
Assuming $r=s=1$, 
\begin{equation}\label{eq:subcrit-hyperb}
\frac{1}{p+1} + \frac{1}{q+1} > 1 - \frac{2}{N},
\end{equation}
 and that $\mu \lambda <\lambda_1^2$, the authors in \cite{HulshofVanderVorst1993} proved existence of at least one smooth solution for a slightly more general version of system \eqref{HS1}. In \cite{Vandervorst1992}, the author shows non-existence of non-negative solutions of \eqref{HS1} when $\mu \lambda \geq \lambda_1^2$. 

\vskip 6pt

 Using a dual formulation due to Clarke and Ekeland (see \cite{ClarkeEkeland1980}) and still assuming $r=s=1$, the authors in \cite{HulshofMitidieriVanderVorst1998} study the case when equality in \eqref{cond:crit_hyperbola} {holds}. In \cite{HulshofMitidieriVanderVorst1998}, precise asymptotic information about the {\it ground state solutions} of \eqref{HS1} when $\Omega=\R^N$, $\mu=\lambda=0$ is required and taken from the analysis made by Hulshof and Van der Vorst in \cite{HulshofVanderVorst1996}.\\ 

Inspired by these results, recently Guimar\~{a}es and Moreira dos Santos, in \cite{Guimaraes-dosSantos2023}, study system \eqref{HS1} also in the critical case (i.e.{,} assuming equality holds in \eqref{cond:crit_hyperbola}) when $rs\geq 1$, $0<r<p$ and $0<s<q$. The authors raise interesting questions, \red{especially} with regard to the case $N=3$, and prove that if either a) $N\geq 4$ or b) $N= 3$ and $p\leq 7/2$ or $p\geq 8$, then 
\begin{itemize}
    \item there exists  a positive classical solution for arbitrary pairs of parameters $\lambda>0$ and $\mu>0$ when $rs >1$,
    \item there exists a positive classical solution when $\lambda \mu ^r$ is suitable small and $rs =1$.
\end{itemize}

\vskip 6pt

We refer the reader to \cite{dosSantos2009} for a generalization of some the aforementioned results to the case of cooperative systems.

\vskip 3pt
Moreira dos Santos, in {\cite{dosSantos2009-2}}, considers systems of the type \eqref{HS1} when $\mu=0$ and $p>\frac{1}{q}>r>0$ (according to our notations). Explicit values of $\lambda$ for existence and non-existence of nonnegative solutions of the system \eqref{HS1} are studied assuming, in addition, inequality \eqref{cond:crit_hyperbola}. The case of equality in \eqref{cond:crit_hyperbola} is treated with an additional restriction on the dimension $N$ (see Theorem 1.3 and Theorem 1.4 in \cite{dosSantos2009-2}).  We observe that when $q>1$, the  condition $p>\frac{1}{q}>r>0$ implies $0<r<1$. So, the results in \cite{dosSantos2009-2} extend to one-parameter Hamiltonian elliptic systems the results from \cite{AMBROSSETIBREZISCERAMI1994}. It is these two references that motivated of our interest in (and inspired our study of) \eqref{HS1}.

\vskip 3pt

Assuming that $0<r,s<1$ and $p,q>1$, but not \eqref{cond:crit_hyperbola}, the authors in \cite{AGUDELORUFVELEZ2023} proved the existence of one smooth solution to \eqref{HS1} when $\lambda, \mu\geq 0$ are sufficiently small. It is also proven in Theorem 1.2 in \cite{AGUDELORUFVELEZ2023} that smooth solutions to \eqref{HS1} do not exist provided $\mu,\lambda\geq 0$ satisfy the inequality
\begin{equation}\label{ineq:mulambdahyperb}
\mu^{\frac{q-1}{q-s}}\lambda^{\frac{p-1}{p-r}}> C_{rspq}\lambda_1^2,
\end{equation}
where $C_{rspq}:= \left(C_{rp}C_{sq}\right)^{-1}$ with 
\begin{small}
$$
\begin{aligned}
C_{rp}:=&\Big[\Big(\frac{1-r}{p-1}\Big)^{-\frac{1-r}{p-r}} + \Big(\frac{1-r}{p-1}\Big)^{\frac{p-1}{p-r}}\Big],\\
C_{sq}:=&\Big[\Big(\frac{1-s}{q-1}\Big)^{-\frac{1-s}{q-s}} + \Big(\frac{1-s}{q-1}\Big)^{\frac{q-1}{q-s}}\Big].
\end{aligned}
$$
\end{small}

{We remark that 
$$
\lim \limits_{r\to 1^-}C_{rp}=1 \quad \hbox{and} \quad \lim \limits_{s\to 1^-}C_{sq}=1
$$ and hence when taking the limit as $r,s\to 1^-$ in \eqref{ineq:mulambdahyperb}, we obtain $\mu\lambda \geq \lambda_1^2$, which is the sufficient condition given in \cite{Vandervorst1992} for non-existence of nonnegative solutions to \eqref{HS1} for the case $r=s=1$. In this sense, Theorem 1.2 in \cite{AGUDELORUFVELEZ2023} extends and complements the aforementioned result in \cite{Vandervorst1992}.}

\vskip 6pt


In this work we continue the study started in \cite{AGUDELORUFVELEZ2023} and  we focus on  the {\it region of pairs} of non-negative parameters $(\mu,\lambda)$ that guarantee exis\-tence and multiplicity of solutions \eqref{HS1} when $0<r,s<1$. Consider the following hypotheses:

\begin{itemize}
\item[{\rm (A1)}] (subcriticality)
\begin{equation}\label{eq:crit-hyperb}
\frac{1}{p+1} + \frac{1}{q+1} > 1 - \frac{2}{N}.
\end{equation}

\item[{\rm (A2)}] $0<r<\min (1,p)$ and $s{<}\min(1,q)$;

\item[{\rm (A3)}] $r\in (0, \frac{1}{q})$.

\end{itemize}

In what follows, solutions of \eqref{HS1} are understood to be $C^2(\Omega)\cap C(\overline{\Omega})$-classical solutions unless otherwise stated. Our main result reads as follows.

\begin{theorem}
\label{theo:bifur_diagram}
Assume {\rm (A1)}, {\rm (A2)} and {\rm (A3)} and $qp>1$. Then, the region 
$$
S:=\Big\{(\lambda,\mu)\in (0,\infty)\times [0,\infty)\,:\, \eqref{HS1}\quad \hbox{has a solution} \Big\}
$$
is non-empty and path connected. Even more, there exists a {continuous non-increasing} function $\lambda_*:[0,\infty)\to [0,\infty]$ such that for any $\mu \geq 0$,
\begin{itemize}
\item[i.] if $\lambda_*(\mu)>0$ and $\lambda\in (0,\lambda_*(\mu))$, the system \eqref{HS1} has at least two solutions for the pair $(\lambda,\mu)$; 

\item[ii.] if 
  $ 0<\lambda_*(\mu)<\infty$, for $\lambda=\lambda_*(\mu)$ the system \eqref{HS1} has at least one solution for the pair $(\lambda,\mu)$;

\item[iii.] if $\lambda_*(\mu)<\infty$ and $\lambda \in (\lambda_*(\mu),\infty)$, the system \eqref{HS1} has no solution for the pair $(\lambda,\mu)$; 

{\item[iv.] $\lambda_*$ is strictly decreasing in the set $\{\mu \in (0,\infty)\,/\, \lambda_*(\mu)\in (0,\infty)\}$;}

\item[{v.}] if $p,q>1$, for each $\mu\geq 0$, $\lambda_*(\mu)<\infty$ and $\mu^{\frac{q-1}{q-s}}\lambda_*(\mu)^{\frac{p-1}{p-r}}\leq {C_{rspq}}\lambda_1^2 $.

\end{itemize}
\end{theorem}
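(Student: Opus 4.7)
The plan is to work entirely on the reduced single-equation reformulation indicated in the excerpt, namely
$$
\Delta \psi(\mu,\Delta v) = \lambda |v|^{r-1}v + |v|^{p-1}v \quad \text{in } \Omega,
$$
with appropriate Navier-type boundary conditions on $v$, posed in a closed subspace of $W^{2,(q+1)/q}(\Omega)$, and to study its associated energy functional $J_{\lambda,\mu}$. The set $S$ will be parametrized by $\lambda_*(\mu):=\sup\{\lambda>0:(\lambda,\mu)\in S\}$ and the conclusions i--v will be proved in the following order.

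First, I would prove a \emph{monotonicity/sub-super solution principle}: if $(\lambda_0,\mu_0)\in S$, then $(\lambda,\mu)\in S$ for every $0<\lambda\le \lambda_0$ and $0\le\mu\le\mu_0$. Indeed, given a classical solution $(u_0,v_0)$, the pair itself serves as a supersolution for the smaller parameters thanks to the positivity and monotonicity of the nonlinearities and the strict maximum principle in the quasilinear framework. Coupled with a small subsolution constructed from the principal eigenfunction $\varphi_1$ (or, when $\mu=0$, from a rescaling of the solution already known to exist by Theorem~1.1 in \cite{AGUDELORUFVELEZ2023}), an iteration scheme yields a classical solution for $(\lambda,\mu)$. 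This immediately makes $S$ a ``lower set'' in $(0,\infty)\times[0,\infty)$ and guarantees that $\lambda_*$ is well defined and \emph{non-increasing}, which in particular gives the existence of the function of item~(v) via the non-existence criterion \eqref{ineq:mulambdahyperb} (when $p,q>1$, this criterion from \cite{AGUDELORUFVELEZ2023} forces $\lambda_*(\mu)^{(p-1)/(p-r)}\mu^{(q-1)/(q-s)}\le C_{rspq}\lambda_1^2$, hence finiteness).

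Next comes the \emph{multiplicity} for $\lambda<\lambda_*(\mu)$. Pick $\lambda<\lambda'<\lambda_*(\mu)$ for which a solution $v'$ exists. Sub-super solutions furnish a first solution $v_1$ that is a \emph{local minimizer} of $J_{\lambda,\mu}$, with $J_{\lambda,\mu}(v_1)<0$; this uses that near the origin the leading term behaves as a \emph{sublinear} $\tfrac1s$--bi-Laplacian perturbed by the concave term $\lambda|v|^{r-1}v$, so the functional is bounded below on the order interval and admits a negative-energy minimizer. For the \emph{second} solution I would verify the mountain pass geometry around $v_1$: the superlinearity of $|v|^{p-1}v$ together with the growth $\psi(\mu,\cdot)\sim |\cdot|^{1/q}$ at infinity forces $J_{\lambda,\mu}\to-\infty$ along certain rays, while the subcriticality~(A1) on the critical hyperbola guarantees the compact embedding ensuring the Palais--Smale condition. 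A standard argument then produces a second critical point $v_2\neq v_1$; regularity (Theorem earlier in the paper) yields a classical solution pair $(u_2,v_2)$.

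Third, the boundary case $\lambda=\lambda_*(\mu)$ when $0<\lambda_*(\mu)<\infty$ is dealt with by taking a sequence $\lambda_n\uparrow\lambda_*(\mu)$ with corresponding first (minimum-energy) solutions $v_n$. Using that $v_n$ is a minimizer on an order interval and the a priori upper bound given by any fixed supersolution at parameter $\lambda_*(\mu)+\varepsilon$ (which might not exist --- so instead I would use directly the energy bound $J_{\lambda_n,\mu}(v_n)\le 0$ together with the coercivity on the ``low'' branch provided by (A2)--(A3), exactly as in Ambrosetti--Brezis--Cerami), one extracts a weakly convergent subsequence whose limit $v_*\not\equiv 0$ is a solution at $(\lambda_*(\mu),\mu)$. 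The condition (A3), $r<1/q$, is crucial here: it controls the concave term by the bi-Laplacian-type part and prevents degeneration. Strict monotonicity (iv) follows by a perturbation argument: if $\lambda_*(\mu_1)=\lambda_*(\mu_2)$ for $\mu_1<\mu_2$, then the solution at $(\lambda_*(\mu_2),\mu_2)$ would serve as a strict supersolution at $(\lambda_*(\mu_1),\mu_1)$, so one could build a solution for slightly larger $\lambda$, contradicting the definition of $\lambda_*(\mu_1)$. Continuity follows from monotonicity plus the existence of solutions at the endpoints: the only possible discontinuities of a monotone function are jumps, and a jump would contradict the solvability at $\lambda_*(\mu)$ on one side together with non-solvability just past it combined with the path-connectedness of $S$ obtained by joining $(\lambda,\mu)$ to $(0,0)$ through the monotone rectangle.

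The main obstacle I anticipate is the verification of the Palais--Smale condition for the mountain pass level in the \emph{quasilinear} fourth-order setting: the operator $\Delta\psi(\mu,\Delta\cdot)$ is not a gradient of a quadratic form, so the usual Brezis--Lieb/splitting techniques must be carefully adapted to the Orlicz-type space in which $J_{\lambda,\mu}$ is naturally defined, and compactness of the relevant Sobolev embeddings must be leveraged through the strict inequality in (A1). A secondary difficulty is to obtain the second-solution energy bound that survives the passage $\lambda_n\uparrow\lambda_*(\mu)$ in order to also assert two solutions arbitrarily close to (but below) the curve; for the theorem as stated only one solution on the curve is needed, which relaxes this difficulty.
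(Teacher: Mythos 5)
Your overall strategy coincides with the paper's: reduce to the quasilinear fourth--order problem, get two solutions below the curve by combining an order--interval (truncation) minimizer with a mountain--pass argument, define $\lambda_*(\mu)=\sup\{\lambda:(\lambda,\mu)\in S\}$, obtain a solution on the curve by passing to the limit along $\lambda_n\uparrow\lambda_*(\mu)$, and get item (v) from the non-existence inequality \eqref{ineq:mulambdahyperb}. However, two of the statements you treat as immediate are precisely where the paper has to work hardest, and as written they are gaps. First, strict monotonicity (iv): you argue that if $\lambda_*(\mu_1)=\lambda_*(\mu_2)$ with $\mu_1<\mu_2$, the solution at $(\lambda_*(\mu_2),\mu_2)$ is a ``strict supersolution'' at $(\lambda_*(\mu_1),\mu_1)$ and therefore one can solve for slightly larger $\lambda$. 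But the strictness you gain from $\mu_1<\mu_2$ sits only in the \emph{second} equation; the first equation $-\Delta u_*=\lambda_* v_*^r+v_*^p$ holds with equality and has no slack in $\lambda$, so the pair $(u_*,v_*)$ is \emph{not} a supersolution for any $\lambda>\lambda_*$. One must transfer the slack in $\mu$ into the first equation, which is what the paper does by the quantitative construction $u=u_*+\epsilon\varphi$ with $-\Delta\varphi=v_*^r$, $v=v_*$, verified to be a supersolution for $(\lambda_*+\epsilon,\mu-\delta)$ only under the constraint \eqref{ineq:supersln_u} (roughly $\epsilon/\delta^{1/s}$ small, via Hopf's lemma and the elementary inequality $|x^m-y^m|\le C_m|x-y|^m$). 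The same objection applies to your continuity argument: monotonicity plus solvability on the curve does \emph{not} exclude a downward jump of $\lambda_*$ as $\mu$ increases through a point (right-continuity); path-connectedness of $S$ is automatic from the rectangle property and gives no upward information in $\mu$. The paper proves right-continuity by the symmetric perturbation $v=v_*+\delta\psi$, $-\Delta\psi=u_*^s$, producing $(\lambda_*-\epsilon,\mu+\delta)\in S$, and proves left-continuity by a compactness argument along the lines of its Lemma on increasing sequences $\widehat\lambda_k\uparrow\widehat\lambda$ — not by monotonicity alone.

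Two further points need attention. To run the mountain-pass step you need the order-interval minimizer $v_1$ to be a local minimizer of $J_{\mu,\lambda}$ \emph{in the topology of} $W$, not merely within the order interval or in $C^2_0(\overline\Omega)$; this is exactly the role of the paper's Theorem \ref{theo:local_min} (the Brezis--Nirenberg-type ``$C^2_0$ versus $W$'' result), which your sketch never invokes, and without which the passage from the truncated functional to a genuine local minimum of $J_{\mu,\lambda}$ in $W$ is unjustified (the paper also handles the possibly non-strict minimum via a Pucci--Serrin/mountain-pass variant on a one-sided truncation $\hat J_{\mu,\lambda}$). Finally, your subsolution built from the principal eigenfunction fails in the second equation when $\mu=0$ and $q>1$ (near $\partial\Omega$, $\epsilon^q\varphi_1^q\ll\lambda_1\epsilon\varphi_1$); the paper instead scales the solution $\omega$ of the auxiliary sublinear problem \eqref{4thorderBVP_sublinear}, and this specific choice also yields the comparison lemma (every solution of \eqref{HS1} dominates the scaled pair), which is what makes the truncation between $\underline v$ and $\overline v$, the nontriviality of the limit at $\lambda=\lambda_*(\mu)$, and the uniform lower bounds in the limiting argument work. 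These gaps are fillable, but they constitute the substantive content of parts (iv) and of the continuity claim, and of the multiplicity mechanism, rather than routine details.
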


\begin{remark}
Existence of solutions to \eqref{HS1} in the case $\mu=0$ and $\lambda=0$ was already proven by Hulshof and Van der Vorst in Theorem 1 from \cite{HulshofVanderVorst1993}. We remark that Theorem 1 in \cite{HulshofVanderVorst1993} can not be summoned to obtain solutions of \eqref{HS1} in the case when either $\mu>0$ or  $\lambda>0$. 
\end{remark}

\begin{remark}
As mentioned above, the existence of solutions to \eqref{HS1} is studied in
\cite{dosSantos2009-2}  when $\mu =0$ and $p>\frac{1}{q}>r>0$ (according to our notations). By switching the roles of $\mu$ and $\lambda$, $p$ and $q$, $r$ and $s$, respectively, the results in \cite{dosSantos2009-2} also apply to problem \eqref{HS1} when $\lambda =0$ and  $pq>1>sp>0$. This leaves open the question of existence when \underline{both} parameters $\lambda$ and $\mu$ are positive, which is the main theme in Theorem \ref{theo:bifur_diagram}. In this direction, Theorem \ref{theo:bifur_diagram} generalizes the results from \cite{dosSantos2009-2}.
\end{remark}

\begin{remark}
{One essential step in the proof of part {\it i.} in Theorem \ref{theo:bifur_diagram} consists in proving existence of at least two nontrivial solutions to \eqref{HS1+} when $\mu$ and $\lambda$ are both positive and small (see Propositions \ref{slnminproblemJmulambda} and \ref{2ndnonnegativesln} below).} 
    
\end{remark}

\vskip 6pt

We study problem \eqref{HS1} through the so-called {\it reduction by inversion} (see \cite{Melo-dosSantos2015}, \cite{Guimaraes-dosSantos2023},  
 and references therein). For the sake of completeness, we present here some ideas from these references.

\vskip 3pt
First observe that if the pair $(u,v)$ is a smooth solution of the differential equations in \eqref{HS1} {with} $u\geq 0$ (or with $v\geq 0$) in $\Omega$, by virtue of the maximum principle, $v\geq 0$ (respectively $u\geq 0$). In view of this comment, we consider the modified elliptic system
\begin{eqnarray}\label{HS1+} 
\left\{
\begin{aligned}
-\Delta u  & = \lambda v_+^{r} +v_+^{p} \qquad &\hbox{in} \ \ \Omega ,\\
-\Delta v   & = \mu |u|^{s-1}u +|u|^{q-1}u \qquad &\hbox{in} \ \ \Omega ,\\
u &=v = 0 \qquad \quad &\hbox{on} \quad \partial \Omega,
\end{aligned}
\right.
\end{eqnarray}
where $\lambda,\mu \geq 0$ and $\zeta_+=\max\{0,\zeta\}$ for $\zeta\in \R$.

\vskip 3pt
Define $f_+:[0,\infty)\times\R\to\R$ by 
\begin{equation*}\label{def:f+}
f_+(\lambda,\zeta):=\lambda \zeta_+^{r} + \zeta_+^p \qquad \hbox{for} \quad \lambda \in [0,\infty) \quad \hbox{and}\quad \zeta \in \R
\end{equation*}
and denote $F_+(\lambda,\zeta):=\int_0^{\zeta}f_+(\lambda,\hat{\zeta})d\hat{\zeta}$ for $\zeta \in \R$.

\vskip 3pt

\vskip 3pt
Let $\mu\geq 0$ arbitrary, but fixed. Consider $\psi(\mu,\cdot):\R\to\R$ the inverse\label{pgref:psi} mapping of the function $\zeta\mapsto \mu |\zeta|^{s-1}\zeta + |\zeta|^{q-1}\zeta$. Applying $\psi(\mu,\cdot)$ on both sides of the second equation in \eqref{HS1+} we obtain the fourth order equation
\begin{equation}\label{4thorderBVP}
\Delta \big(\psi(\mu, \Delta v)\big) = {f_+}(\lambda,v) \quad \hbox{in} \quad \Omega.
\end{equation}

\vskip 4pt

The boundary conditions in \eqref{HS1+} read now as the {\it Navier boundary conditions}
\begin{equation}\label{eqn:navier_bd_cond}
v=\Delta v =0 \quad \hbox{on} \quad \partial \Omega.
\end{equation}


\vskip 3pt
We then study problem \eqref{HS1} via problem \eqref{4thorderBVP}-\eqref{eqn:navier_bd_cond}. With this in mind, we introduce the spaces
$$
\widetilde{W}:=W^{2,\frac{p+1}{p}}(\Omega)\cap W_0^{1,\frac{p+1}{p}}(\Omega) \quad \hbox{with} \quad \|u\|_{\widetilde{W}}:=\|\Delta u\|_{L^{\frac{p+1}{p}}(\Omega)}
$$
and 
$$
W:=W^{2,\frac{q+1}{q}}(\Omega)\cap W_0^{1,\frac{q+1}{q}}(\Omega) \quad \hbox{with} \quad \|v\|_{W}:=\|\Delta v\|_{L^{\frac{q+1}{q}}(\Omega)}.$$

Theorem 9.17 in \cite{GilbargTrudinger} yields the equivalence of the norms $\|\cdot\|_{\widetilde{W}}$ and $\|\cdot\|_{W^{2,\frac{p+1}{p}}(\Omega)}$ in $\widetilde{W}$.  Similarly, the norms $\|\cdot\|_{W}$ and $\|\cdot\|_{W^{2,\frac{q+1}{q}}(\Omega)}$ are equivalent in $W$. Also, it is readily verified that $\widetilde{W},W$ are reflexive and separable Banach spaces. 

\vskip 3pt

By virtue of the classical {\it Sobolev embedding Theorem} for any $m >0$ with
\begin{equation}\label{Sobolevsubcriticalexp_p}
\frac{1}{m+1} + 
\frac{1}{p+1} \geq 1 - \frac{2}{N},
\end{equation}
the constant $S_{p,m}$ defined by
\begin{equation}\label{SobolevEmbedding_p}
S_{p,m}^{-\frac{1}{m+1}}:=\inf \limits_{\substack{\tilde{w}\in \widetilde{W}\\ \tilde{w} \neq 0}} \frac{\|\tilde{w}\|_{\widetilde{W}}}{\|\tilde{w}\|_{L^{m+1}(\Omega)}}
\end{equation}
is finite and positive. Besides, the embedding $\widetilde{W}\hookrightarrow L^{m+1}(\Omega)$ is compact whenever the inequality \eqref{Sobolevsubcriticalexp_p} is strict. Similarly for $W$, for any $m> 0$ with \begin{equation}\label{Sobolevsubcriticalexp_q}
\frac{1}{m+1} + 
\frac{1}{q+1} \geq 1 - \frac{2}{N},
\end{equation}
 the constant $S_{q,m}$ defined by
\begin{equation}\label{SobolevEmbedding_q}
S_{q,m}^{-\frac{1}{m+1}}:=\inf \limits_{\substack{w\in W\\ w \neq 0}} \frac{\|w\|_{W}}{\|w\|_{L^{m+1}(\Omega)}}
\end{equation}
is finite and positive and the embedding $W\hookrightarrow L^{m+1}(\Omega)$ is compact whenever the inequality \eqref{Sobolevsubcriticalexp_q} is strict.

\vskip 4pt



{In what follows, in this section, we assume \eqref{cond:crit_hyperbola}.} The concept of weak solution of \eqref{4thorderBVP}-\eqref{eqn:navier_bd_cond} we consider is the following.

\begin{defi}
A function $w\in W$ is a weak solution of \eqref{4thorderBVP}-\eqref{eqn:navier_bd_cond} if for every $\varphi \in W$, 
\begin{equation}\label{def:weaksln4thordereqn}
\int_{\Omega}\psi(\mu,\Delta w)\Delta \varphi dx = \int_{\Omega}{f_+(\lambda,w)\varphi} dx.
\end{equation}
\end{defi}

{Using\eqref{cond:crit_hyperbola},} it is readily checked that for $w,\varphi \in W$, the integrands in \eqref{def:weaksln4thordereqn} belong to $L^1(\Omega)$ (see Section \ref{sect:Technical_Rslts}, p.\pageref{pg:finite_int}). \\

\vskip 3pt

In order to prove Theorem \ref{theo:bifur_diagram}, we first demonstrate two key results. The first of them states that under hypotheses {\rm (A1)} and {\rm (A2)}, to every weak solution of the BVP \eqref{4thorderBVP}-\eqref{eqn:navier_bd_cond} there corresponds exactly one classical solution of the system \eqref{HS1}. It also states that the reciprocal holds true. Thus, one can deal with problem \eqref{HS1} by studying \eqref{4thorderBVP}-\eqref{eqn:navier_bd_cond}. The precise statement is as follows. \\

\begin{theorem}\label{theo:regularity_4thorder}
Assume {\rm (A2)}.
\begin{itemize}
\item[i.] Let $u,v\in C^2(\Omega)\cap C(\overline{\Omega})$ be such that $(u,v)$ is a classical solution of the system \eqref{HS1}. Then, 
$$
u\in \widetilde{W}, \quad  v\in W,\quad  u=-\psi(\mu,\Delta v) \quad \hbox{in} \quad \Omega
$$ 
and \eqref{4thorderBVP}-\eqref{eqn:navier_bd_cond} are satisfied pointwise.

\item[ii.] Conversely, let $v\in W$ be a weak solution of \eqref{4thorderBVP}-\eqref{eqn:navier_bd_cond} and set $u=-\psi(\mu,\Delta v)$ a.e. in $\Omega$. Then $u\in \widetilde{W}$, $v\in W$ and the pair $(u,v)$ solves the differential equations in \eqref{HS1} in pointwise sense a.e. in $\Omega$.

\vskip 3pt
\item[iii.] Let $v\in W$ be a weak solution of \eqref{4thorderBVP}-\eqref{eqn:navier_bd_cond} and set $u=-\psi(\mu,\Delta v)$ a.e. in $\Omega$. If in addition we assume {\rm (A1)}, $u,v\in C^2(\overline{\Omega})$ and the pair $(u,v)$ is a classical solution of \eqref{HS1}.
\end{itemize}
\end{theorem}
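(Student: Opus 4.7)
The plan is to treat the three parts sequentially; part (ii) carries the bulk of the work, while (i) is a matter of reading the equations pointwise and (iii) is a bootstrap.

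For part (i), a classical solution $(u,v) \in (C^2(\Omega)\cap C(\overline{\Omega}))^2$ of \eqref{HS1} satisfies $v, u \geq 0$ on $\overline{\Omega}$, so both right-hand sides $\lambda v^r+v^p$ and $\mu u^s + u^q$ are bounded and continuous on $\overline{\Omega}$. Combined with the Dirichlet condition, $L^p$ elliptic theory (Theorem 9.15 of Gilbarg--Trudinger) places $u\in\widetilde{W}$ and $v\in W$. Since $\zeta\mapsto\mu|\zeta|^{s-1}\zeta+|\zeta|^{q-1}\zeta$ is an odd, strictly increasing homeomorphism of $\R$, the second equation can be rewritten as $u=-\psi(\mu,\Delta v)$ pointwise; substituting into the first yields exactly \eqref{4thorderBVP}, and $\Delta v|_{\partial\Omega}=0$ follows from $u|_{\partial\Omega}=0$.

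For part (ii) I would identify $u := -\psi(\mu,\Delta v)$, defined a.e.\ from $v\in W$, with the unique strong solution $\tilde u\in\widetilde{W}$ of the linear Dirichlet problem
\[
-\Delta\tilde u = f_+(\lambda,v) \ \text{in}\ \Omega, \qquad \tilde u = 0 \ \text{on}\ \partial\Omega.
\]
The existence of $\tilde u$ requires $f_+(\lambda,v)\in L^{(p+1)/p}(\Omega)$, which follows from (A2) together with the subcritical embedding $W\hookrightarrow L^{p+1}(\Omega)$ provided by \eqref{cond:crit_hyperbola}; standard $L^p$ theory then delivers $\tilde u$. On the other hand, the elementary bound $|\psi(\mu,z)|\leq|z|^{1/q}$ (since for $z\geq 0$, $y=\psi(\mu,z)$ satisfies $y^q\leq\mu y^s+y^q=z$, and $\psi$ is odd) gives $u\in L^{q+1}(\Omega)$. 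A twofold integration by parts against any $\varphi\in W$ (legitimate because $\tilde u$ and $\varphi$ vanish on $\partial\Omega$) yields
\[
\int_{\Omega}\tilde u\,\Delta\varphi\,dx = -\int_{\Omega}f_+(\lambda,v)\,\varphi\,dx,
\]
while substituting $\psi(\mu,\Delta v)=-u$ into the weak formulation \eqref{def:weaksln4thordereqn} gives exactly the same right-hand side for $\int_\Omega u\,\Delta\varphi\,dx$. Subtracting,
\[
\int_{\Omega}(\tilde u-u)\,\Delta\varphi\,dx=0 \quad \text{for every}\ \varphi\in W.
\]
Since $\Delta:W\to L^{(q+1)/q}(\Omega)$ is surjective (again by $L^p$ theory for the Laplacian with Dirichlet data), $\tilde u-u$ annihilates $L^{(q+1)/q}(\Omega)$; the strict form of (A1) yields $\widetilde{W}\hookrightarrow L^{q+1}(\Omega)$ so $\tilde u-u\in L^{q+1}(\Omega)$, and duality forces $u=\tilde u$ a.e. Hence $u\in\widetilde{W}$, $-\Delta u=f_+(\lambda,v)$ a.e., and applying the inverse of $\psi(\mu,\cdot)$ to the identity $u=-\psi(\mu,\Delta v)$ recovers $-\Delta v=\mu|u|^{s-1}u+|u|^{q-1}u$ a.e.

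For part (iii) I would run a Moser/Brezis--Kato bootstrap on the system $-\Delta u=f_+(\lambda,v)$, $-\Delta v=\mu|u|^{s-1}u+|u|^{q-1}u$: strict subcriticality of (A1) makes each combined application of $L^p$ elliptic regularity plus Sobolev embedding strictly improve the integrability of $u$ and $v$, so after finitely many steps both lie in $L^\infty(\Omega)$. With bounded right-hand sides one obtains $u,v\in W^{2,t}(\Omega)\hookrightarrow C^{1,\alpha}(\overline{\Omega})$ for every finite $t$, and since $r,s\in(0,1)$ the composition $t\mapsto t^r$ preserves Hölder regularity on $[0,\infty)$, so Schauder theory lifts $u,v$ to $C^2(\overline{\Omega})$; positivity $u,v>0$ in $\Omega$ then follows from the strong maximum principle applied to each linear equation, provided the weak solution is nontrivial. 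The principal obstacle throughout is the identification step in part (ii): the a.e.-defined composition $-\psi(\mu,\Delta v)$ carries no a priori trace information on $\partial\Omega$, and only the duality argument exploiting surjectivity of $\Delta:W\to L^{(q+1)/q}(\Omega)$ forces it to coincide with the strong Dirichlet solution $\tilde u$ of the linear auxiliary problem.
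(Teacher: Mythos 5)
Your proposal is correct, but in part ii.\ it takes a genuinely different route from the paper at the key identification step. The paper solves \emph{two} auxiliary linear Dirichlet problems: first $\widetilde u\in\widetilde W$ with $-\Delta\widetilde u=h_v$, then $\widetilde v\in W$ with $-\Delta\widetilde v=g(\mu,\widetilde u)$, i.e. $\widetilde u=-\psi(\mu,\Delta\widetilde v)$; it then tests the weak formulation \eqref{def:weaksln4thordereqn} with $\varphi=v-\widetilde v$ and uses the \emph{strict monotonicity} of $\psi(\mu,\cdot)$ to force $\Delta v=\Delta\widetilde v$ a.e., and finally the uniqueness in Theorem 9.15 of \cite{GilbargTrudinger} to get $v=\widetilde v$. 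You solve only the first auxiliary problem and identify $u=-\psi(\mu,\Delta v)$ with $\widetilde u$ directly, from $\int_\Omega(\widetilde u-u)\Delta\varphi\,dx=0$ for all $\varphi\in W$, the surjectivity of $\Delta:W\to L^{\frac{q+1}{q}}(\Omega)$, and $L^{q+1}$--$L^{\frac{q+1}{q}}$ duality (your elementary bound $|\psi(\mu,\theta)|\le|\theta|^{1/q}$ supplying $u\in L^{q+1}(\Omega)$). This is a purely linear argument that dispenses with the monotonicity trick and the second linear solve; what it costs is the explicit use of surjectivity of the Laplacian on $W$ and the $L^{q+1}$ bound on $u$, while the paper's monotonicity argument is the standard device for such quasilinear identifications (and the same trick reappears in its Palais--Smale analysis). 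Both routes rest on the same Green identity between $\widetilde W$ and $W$ and on the embeddings furnished by \eqref{cond:crit_hyperbola}, so neither needs stronger hypotheses; also note that, consistently with \eqref{def:weaksln4thordereqn}, what one obtains pointwise is $-\Delta u=f_+(\lambda,v)$, exactly as in the paper. For part iii.\ the paper simply invokes Theorem \ref{theo:Theorem_1} (with $f(x,u,v)=f_+(\lambda,v)$, $g(x,u,v)=g(\mu,u)$, for which hypothesis (B) holds), whose proof is precisely the bootstrap you sketch; re-running a Moser/Brezis--Kato iteration by hand duplicates that work, so citing Theorem \ref{theo:Theorem_1} is the economical choice. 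Your observation that positivity ($u,v>0$ in $\Omega$, needed to speak of a classical solution of \eqref{HS1} rather than of \eqref{HS1+}) requires the maximum principle and nontriviality is a point the paper leaves implicit at this stage and handles only when the theorem is applied, so it is worth retaining.
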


Our approach to \eqref{4thorderBVP}-\eqref{eqn:navier_bd_cond}, in turn, is variational, and in this direction we introduce the energy $J_{\mu, \lambda}:W \to \R$ defined by
\begin{equation}\label{eq:energy_4thBVP}
J_{\mu,\lambda}(w):=\int_{\Omega}\Big(\Psi(\mu,\Delta w) - F_+(\lambda,w)\Big) dx \quad \hbox{for} \quad w\in W, 
\end{equation}
where $\Psi(\mu,\theta):=\int_{0}^{\theta}\psi(\mu,\hat{\theta})d \hat{\theta}$ for $\theta\in \R$. We notice that $J_{\mu,\lambda}\in C^1(W)$ (see Section \ref{sect:Technical_Rslts} below) with 
\begin{equation}\label{eq:energy_4thBVP_der}
DJ_{\mu,\lambda}(w)\varphi = \int_{\Omega}\psi(\mu,\Delta w)\Delta \varphi
dx - \int_{\Omega}{f_+(\lambda, w) \varphi} dx \quad \hbox{for} \quad w,\varphi \in W.
\end{equation}
Thus,  weak solutions of \eqref{4thorderBVP}-\eqref{eqn:navier_bd_cond} in  $W$ are exactly the critical points of $J_{\mu,\lambda}$ in $W$.

\vskip 3pt

The second major result we establish to prove Theorem \ref{theo:bifur_diagram} is motivated by the celebrated result due to Brezis and Nirenberg in \cite{BREZISNIRENBERG1993}, and it states that local minima of the functional $J_{\mu,\lambda}$, in the class of smooth functions, are also local minima in $W$. Consider the space 
$$
C^2_0(\overline{\Omega}):=\{w\in C^2(\overline{\Omega})\,:\, w=0 \quad \hbox{on} \quad \partial \Omega\}
$$
endowed with the usual norm in $C^2(\overline{\Omega})$. \\

\begin{theorem}\label{theo:local_min}
Let $\lambda,\mu\geq 0$ be fixed. Assume {\rm (A1)} and {\rm (A2)}. Let $w_0\in C^2_0(\overline{\Omega})$ be a local minimum of $J_{\mu,\lambda}$ in $C^2_0(\overline{\Omega})$, i.e., there exists $\delta>0$ such that for any $\varphi \in C^2_0(\overline{\Omega})$,
$$
\|w_0-\varphi\|_{C^2(\overline{\Omega})} \leq \delta \quad \Rightarrow\quad J_{\mu,\lambda}(\varphi) \geq J_{\mu,\lambda}(w_0).
$$

Then, $w_0$ is a local minimum of $J_{\mu,\lambda}$ in $W$, i.e., there exists $\eta>0$ such that for any $w \in W$, 
$$
\|w_0-w\|_{W} \leq \eta \quad \Rightarrow\quad J_{\mu,\lambda}(w) \geq J_{\mu,\lambda}(w_0).
$$
\end{theorem}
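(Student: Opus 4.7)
I argue by contradiction, in the spirit of Brezis--Nirenberg \cite{BREZISNIRENBERG1993} but adapted to the fourth-order formulation via Theorem \ref{theo:regularity_4thorder}. Suppose $w_0$ is not a local minimum of $J_{\mu,\lambda}$ in $W$; then there exist $w_n\in W$ with $\varepsilon_n:=\|w_n-w_0\|_W\to 0$ and $J_{\mu,\lambda}(w_n)<J_{\mu,\lambda}(w_0)$. The plan is to replace each $w_n$ by a constrained minimizer $\tilde w_n$ of $J_{\mu,\lambda}$ on the $W$-ball $\{w\in W:\|w-w_0\|_W\le\varepsilon_n\}$, exploit the system reduction to run an elliptic regularity bootstrap, show $\tilde w_n\to w_0$ in $C^2_0(\overline{\Omega})$, and then contradict the hypothesis since $J_{\mu,\lambda}(\tilde w_n)<J_{\mu,\lambda}(w_0)$.

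\textbf{Existence of constrained minimizers.} The convexity of $\Psi(\mu,\cdot)$ makes $w\mapsto\int_\Omega\Psi(\mu,\Delta w)\,dx$ weakly lower semicontinuous on $W$, and by {\rm (A1)} the embeddings \eqref{SobolevEmbedding_q} with $m=r$ and $m=p$ are compact, so $w\mapsto\int_\Omega F_+(\lambda,w)\,dx$ is weakly sequentially continuous. The constraint set is weakly compact by reflexivity of $W$, and so a minimizer $\tilde w_n$ exists, with $J_{\mu,\lambda}(\tilde w_n)\le J_{\mu,\lambda}(w_n)<J_{\mu,\lambda}(w_0)$ and $\tilde w_n\to w_0$ strongly in $W$.

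\textbf{Euler equation with multiplier.} If $\|\tilde w_n-w_0\|_W<\varepsilon_n$, then $DJ_{\mu,\lambda}(\tilde w_n)=0$. Otherwise, since $\tilde w_n\neq w_0$ and the $L^{(q+1)/q}$-norm is Fr\'echet differentiable at non-zero points (as $1<(q+1)/q<\infty$), the Lagrange multiplier rule yields $t_n\ge 0$ and $h_n\in L^{q+1}(\Omega)$ with $\|h_n\|_{L^{q+1}}\le 1$ such that
$$
\int_\Omega\bigl[\psi(\mu,\Delta\tilde w_n)-t_n h_n\bigr]\Delta\varphi\,dx=\int_\Omega f_+(\lambda,\tilde w_n)\varphi\,dx,\qquad\varphi\in W.
$$
Setting $\tilde u_n:=t_n h_n-\psi(\mu,\Delta\tilde w_n)$ and invoking the natural analogue of Theorem \ref{theo:regularity_4thorder}(ii), the pair $(\tilde u_n,\tilde w_n)$ satisfies pointwise a.e.\ a perturbed version of \eqref{HS1+} in which $\tilde u_n$ is replaced by $\tilde u_n-t_n h_n$ inside the nonlinearity of the second equation.

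\textbf{Bootstrap and conclusion.} This is where I expect the main difficulty to lie. The perturbation $t_n h_n\in L^{q+1}(\Omega)$ has to be carried along in the iteration: one redoes the argument of Theorem \ref{theo:regularity_4thorder}(iii) on the perturbed system, using the strict subcriticality in {\rm (A1)} as the headroom for a Moser/Calder\'on--Zygmund iteration that produces uniform $L^\infty(\Omega)$-bounds on $(\tilde u_n,\tilde w_n)$; the bound $\|h_n\|_{L^{q+1}}\le 1$ keeps the perturbation benign, and an Ekeland-type reformulation of the constrained minimization (replacing $\tilde w_n$ by an almost-critical point at distance $o(\varepsilon_n)$ from $w_0$) can be invoked to ensure $t_n\to 0$. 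Schauder estimates then promote the $L^\infty$-bounds to uniform $C^{2,\alpha}(\overline{\Omega})$-bounds, and Arzel\`a--Ascoli combined with the strong $W$-convergence $\tilde w_n\to w_0$ forces $\tilde w_n\to w_0$ in $C^2_0(\overline{\Omega})$, which contradicts the $C^2_0$-local minimality of $w_0$ in view of $J_{\mu,\lambda}(\tilde w_n)<J_{\mu,\lambda}(w_0)$, and closes the proof.
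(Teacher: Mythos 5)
Your overall strategy---argue by contradiction, replace $w_n$ by a constrained minimizer near $w_0$, write an Euler equation with a Lagrange multiplier, upgrade regularity to uniform $C^{2,\beta}$ bounds, and contradict the $C^2_0$-minimality via Arzel\`a--Ascoli---is exactly the paper's. The gap is your choice of constraint, and it is fatal precisely at the step you yourself flag as the main difficulty. Constraining on the $W$-ball makes the multiplier the derivative of $\|\Delta(\cdot-w_0)\|_{L^{\frac{q+1}{q}}}$, i.e. $h_n=c_n|\Delta(\tilde w_n-w_0)|^{\frac1q-1}\Delta(\tilde w_n-w_0)$, which is merely an $L^{q+1}$-function sitting inside the principal part. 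After the reduction, the second equation of the perturbed system reads $-\Delta \tilde w_n=g\bigl(\mu,\tilde u_n-t_nh_n\bigr)$, whose right-hand side contains $|t_nh_n|^{q}\in L^{\frac{q+1}{q}}(\Omega)$ and nothing better; since $h_n$ gains no integrability along the iteration, the bootstrap of Theorem \ref{theo:Theorem_1} stalls at $\tilde w_n\in W^{2,\frac{q+1}{q}}(\Omega)$ (which you already knew), so no $L^\infty$ bound, no Schauder step, and no uniform $C^{2,\beta}$ bound is available. Without $\tilde w_n\in C^2_0(\overline{\Omega})$ converging in $C^2$, the comparison with the $C^2_0$-local minimality of $w_0$ never takes place. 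The appeal to an ``Ekeland-type reformulation to ensure $t_n\to0$'' does not repair this: no argument for $t_n\to0$ is given, and even granting it, for each fixed $n$ the perturbation is still only $L^{q+1}$, so $\tilde w_n$ need not even be continuous.

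The paper circumvents exactly this obstruction by constraining with zeroth-order, subcritical Lebesgue quantities: $G(w)=\|w-w_0\|_{L^{\sigma+1}(\Omega)}^{\sigma+1}+\|w-w_0\|_{L^{\rho+1}(\Omega)}^{\rho+1}$ with $0<\sigma<\min\{1,\tfrac1q,r\}$ and $p\le\rho$ strictly below the critical exponent for $W$. The multiplier then contributes $\gamma_n\bigl[(\sigma+1)|w-w_0|^{\sigma-1}(w-w_0)+(\rho+1)|w-w_0|^{\rho-1}(w-w_0)\bigr]$ to the right-hand side of the \emph{first} equation, i.e. a H\"older/Carath\'eodory nonlinearity of $w$ alone still satisfying (A1) and (B) (with $\rho$ in place of $p$), so Theorem \ref{theo:Theorem_1} applies verbatim and gives uniform $C^{2,\beta}$ bounds once $\gamma_n$ is bounded. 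The bound (indeed $\gamma_n\to0$) is then proved by testing the Euler equation with $w_n-w_0$, using $DJ_{\mu,\lambda}(w_0)(w_n-w_0)=0$ (from the $C^2_0$-minimality), the H\"older continuity of $f_+(\lambda,\cdot)$ and of $\psi(\mu,\cdot)$, and the choice $\sigma<\min\{\tfrac1q,r\}$ so that $\epsilon_n^{\frac1q-\sigma},\epsilon_n^{r-\sigma}\to0$. If you wish to keep your version, you must either show that a multiplier attached to the $W$-norm can be absorbed into the $\psi$-structure (it cannot, in any straightforward way) or switch to a constraint of the paper's type; note that with that constraint the set $M_n$ is unbounded in $W$, so the existence of the constrained minimizer requires the coercivity of $J_{\mu,\lambda}$ on $M_n$ and weak closedness via the compact embeddings, rather than the weak compactness of a ball that you invoked.
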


{The proof of Theorem \ref{theo:local_min} is carried out for the energy $J_{\mu,\lambda}$ (see Section 4 below). We  point out that it can be readily adapted to consider more general scenarios. The key is to improve the regularity of the critical points of the corresponding energy.} \\



\vskip 4pt

{The proofs of Theorems \ref{theo:regularity_4thorder} and \ref{theo:local_min}, in turn, are based  on the study of regularity  of {\it very weak solutions} (see e.g \cite{GilbargTrudinger}, \cite{BREZISCASENAVEMARTEL1996}, \cite{DiFrattaFiorenza2020}) to {\it elliptic systems} of the form
\begin{equation}\label{S1} 
\left\{
\begin{aligned}
-\Delta u  & = f(x,u,v)
 &\hbox{in}& \quad \Omega,\\
-\Delta v  & = g(x,u,v)
 &\hbox{in}& \quad \Omega,\\
u &=v = 0 &\hbox{on}& \quad \partial \Omega,
\end{aligned}
\right.
\end{equation}
where $f,g\in C^{0,\alpha}_{loc}(\Omega \times \R \times \R)$ for some $\alpha\in (0,1 )$. For the sake of clarity we include here some terminology. Following \cite{DiFrattaFiorenza2020}, a {\it very weak solution} of \eqref{S1} is a pair $(u,v)$ of functions $u$ and $v$ such that $u,v\in L^1_{loc}(\Omega)$, $f(x,u,v)$ and  $g(x,u,v)$ also belong to $L^1_{loc}(\Omega)$ and for any $\varphi, \psi\in C^{\infty}_c(\Omega)$,
\begin{equation*}
-\int_{\Omega}u\Delta \varphi dx = \int_{\Omega}f(x,u,v)\varphi dx
\end{equation*}  
and 
\begin{equation*}
-\int_{\Omega}v\Delta \psi dx = \int_{\Omega}g(x,u,v)\psi dx.
\end{equation*}

\vskip 3pt
To study regularity of very weak solutions of \eqref{S1}, we consider the following hypothesis. Let $p,q>0$ be such that
\begin{itemize}
\item[{\rm (B)}]  for some $c>0$, for any $x\in \Omega$ and any $u,v\in \R$,
\begin{equation}\label{ineq:growth_f}
|f(x,u,v)|\leq c\big(1 + |u|^{\frac{q+1}{p+1}} + |v|\big)^p;
\end{equation}

\begin{equation}\label{ineq:growth_g}
|g(x,u,v)|\leq c\big(1 + |u| + |v|^\frac{p+1}{q+1}\big)^q .
\end{equation}
\end{itemize}

\begin{remark} {Assumption {\rm (B)} is satisfied, for instance, in the case that $f=f(x,v)$ and $g=g(x,u)$ are such that 
$$
|f(x,v)| \leq c\big(1 + |v|^p\big), \quad \hbox{and} \quad |g(x,u)|\leq c\big(1 + |u|^q\big).
$$
In the case $p=q$, {\rm (B)} is also satisfied when $f=f(x,u,v)$ and $g=g(x,u,v)$ are such that 
$$
|f(x,u,v)|+|g(x,u,v)|\leq c\big(1+|u|^p + |v|^p\big).
$$}
\end{remark}

\begin{remark}
    {Hypotheses (A1) and (B) are the basic assumptions for the study of sub-critical Hamiltonian systems. We refer the reader to \cite{felmerdefigueiredo1994}, \cite{DeFigueiredo1996} and \cite{HulshofVanderVorst1993} and references there in.}
\end{remark}

A key result, which we use in the {proofs} of Theorems \ref{theo:regularity_4thorder} and \ref{theo:local_min}, is the following statement about the regularity of very weak solutions to \eqref{S1}.

\vskip 3pt

\begin{theorem} \label{theo:Theorem_1}
Assume {\rm (A1)} and {\rm (B)} and let $(u,v)$ be a very weak solution of \eqref{S1} with $u\in L^{q+1}_{loc}(\Omega)$ and $v\in L^{p+1}_{loc}(\Omega)$. Then 
\begin{itemize}
\item[i.] {$u\in W^{2,\frac{p+1}{p}}_{loc}(\Omega)$, $v\in W^{2,\frac{q+1}{q}}_{loc}(\Omega)$} and the differential equations in \eqref{S1} are satisfied pointwise a.e. in $\Omega$. Even more, whenever $u\in L^{q+1}(\Omega)$ and $v\in L^{p+1}(\Omega)$, then $u\in W^{2,\frac{p+1}{p}}(\Omega)$ and $v \in W^{2,\frac{q+1}{q}}(\Omega)$;

\item[ii.] {if $u\in W^{1,\frac{p+1}{p}}_0(\Omega)$ and  $v\in W^{1,\frac{q+1}{q}}_0(\Omega)$, then} for some $\beta \in (0,1)$, 
$u,v\in C^{2,\beta}(\overline{\Omega})$ and $(u,v)$ is a classical solution of \eqref{S1}, i.e., {all equalities in \eqref{S1}} are satisfied pointwise in $
\overline{\Omega}$. 
\end{itemize}

\vskip 3pt
Moreover, there exists a constant $\mathcal{C}>0$ such that 
\begin{equation}\label{est:main_apriori}
\|u\|_{C^{2,\beta}(\overline{\Omega})}+\|v\|_{C^{2,\beta}(\overline{\Omega})}\leq \mathcal{C},
\end{equation}
and $\beta\in (0,1)$ and $\mathcal{C}$  are functions of $p$, $q$, $\|u\|_{W^{2,\frac{p+1}{p}}(\Omega)}$, $\|v\|_{W^{2,\frac{q+1}{q}}(\Omega)}$ and the constant $c>0$ in \eqref{ineq:growth_f} and \eqref{ineq:growth_g} and, {even more,} $\beta$ and $\mathcal{C}$ map bounded sets onto bounded sets.
\end{theorem}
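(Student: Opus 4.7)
The overall strategy is a two-stage bootstrap. In the first stage I promote the very weak pair $(u,v)$ to a strong $W^{2,\cdot}$ pair; in the second I use the subcriticality (A1) to push the integrability of $(u,v)$ up to $L^\infty$, after which Schauder estimates yield the $C^{2,\beta}$ conclusion. The growth hypothesis (B) is calibrated so that the initial integrability $u\in L^{q+1}_{loc}(\Omega)$, $v\in L^{p+1}_{loc}(\Omega)$ immediately yields $f(\cdot,u,v)\in L^{(p+1)/p}_{loc}(\Omega)$ and $g(\cdot,u,v)\in L^{(q+1)/q}_{loc}(\Omega)$, as one checks pointwise from
$$|f|^{(p+1)/p}\leq C\bigl(1+|u|^{q+1}+|v|^{p+1}\bigr)$$
and its analogue for $g$. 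The classical regularity theorem for very weak solutions of $-\Delta w=h$ (cf.\ the results of Brezis--Cazenave--Martel and Di Fratta--Fiorenza cited in the excerpt), applied to each scalar equation, then gives part (i): $u\in W^{2,(p+1)/p}_{loc}(\Omega)$, $v\in W^{2,(q+1)/q}_{loc}(\Omega)$, with the PDE's satisfied a.e., and the global $W^{2,\cdot}(\Omega)$ statement follows identically from global integrability together with the global Calderon--Zygmund estimate.

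For (ii), assuming the zero-trace hypotheses $u\in W^{1,(p+1)/p}_0(\Omega)$ and $v\in W^{1,(q+1)/q}_0(\Omega)$, I would iterate integrability as follows. Set $\alpha_0:=q+1$, $\beta_0:=p+1$, and suppose inductively that $u\in L^{\alpha_n}(\Omega)$, $v\in L^{\beta_n}(\Omega)$. By (B) and H\"older,
$$f(\cdot,u,v)\in L^{\gamma_n}(\Omega),\qquad g(\cdot,u,v)\in L^{\delta_n}(\Omega),$$
with $\gamma_n:=\min\!\bigl(\tfrac{(p+1)\alpha_n}{p(q+1)},\tfrac{\beta_n}{p}\bigr)$ and a symmetric $\delta_n$. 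The global Calderon--Zygmund estimate combined with zero trace then gives $u\in W^{2,\gamma_n}(\Omega)$, $v\in W^{2,\delta_n}(\Omega)$, and Gagliardo--Nirenberg--Sobolev updates $(\alpha_{n+1},\beta_{n+1})$. The recursion on the pair $(1/\alpha_n,1/\beta_n)$ is linear, and the key point is that (A1), namely $\tfrac{1}{p+1}+\tfrac{1}{q+1}>1-\tfrac{2}{N}$, is precisely the spectral gap that makes this recursion drive the exponents to $+\infty$ in finitely many steps, eventually forcing $\gamma_n,\delta_n>N/2$ and therefore $u,v\in L^\infty(\Omega)$. This subcritical bootstrap, in the spirit of the De Figueiredo--Felmer analysis of Hamiltonian systems, is the main technical obstacle.

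Once $u,v\in L^\infty(\Omega)$, then $f,g\in L^\infty(\Omega)$, so the global $W^{2,r}$ estimate for every $r<\infty$ together with Morrey's embedding give $u,v\in C^{1,\alpha}(\overline{\Omega})$ for some $\alpha\in(0,1)$. Since $f,g\in C^{0,\alpha}_{loc}(\Omega\times\R\times\R)$, the compositions $x\mapsto f(x,u(x),v(x))$ and $x\mapsto g(x,u(x),v(x))$ are H\"older continuous on $\overline{\Omega}$, and Schauder's interior and boundary estimates (using the zero boundary data inherited from the trace hypothesis) upgrade $u$ and $v$ to $C^{2,\beta}(\overline{\Omega})$. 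For the a priori bound \eqref{est:main_apriori}, I would observe that every step in both iterations is quantitative and that the total number of iterations depends only on $p,q,N$; tracking the Calderon--Zygmund, Sobolev and Schauder constants through the bootstrap then yields $\beta\in(0,1)$ and $\mathcal{C}$ as continuous functions of $p$, $q$, the constant $c$ from (B), and the $W^{2,\cdot}$ norms of $u$ and $v$, mapping bounded sets to bounded sets as claimed.
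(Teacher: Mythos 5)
Your overall route is the same as the paper's: part (i) by observing that (B) turns the assumed integrability of $(u,v)$ into $f(\cdot,u,v)\in L^{\frac{p+1}{p}}_{loc}$, $g(\cdot,u,v)\in L^{\frac{q+1}{q}}_{loc}$ and invoking the very-weak regularity result of Di Fratta--Fiorenza plus the global Calder\'on--Zygmund estimate under the zero-trace hypotheses; part (ii) by an integrability bootstrap driven by (A1), finished with Morrey and Schauder. The genuine gap is the step you yourself flag as ``the main technical obstacle'': you assert, but do not prove, that the recursion on $(1/\alpha_n,1/\beta_n)$ pushes $\gamma_n,\delta_n$ past $N/2$ in finitely many steps, and this is precisely where the paper's proof does essentially all of its work (its Step 3). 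Note first that the recursion is not linear: because of the minima in $\gamma_n=\min\bigl(\tfrac{(p+1)\alpha_n}{p(q+1)},\tfrac{\beta_n}{p}\bigr)$ and its analogue for $\delta_n$, it is a coupled, piecewise affine map, and in the relevant branch the map in the variables $1/\alpha_n$, $1/\beta_n$ has slope larger than $1$, so it possesses a finite fixed point (the critical-hyperbola scaling); monotonicity of the exponents alone does not exclude stalling below $N/2$. One must (a) use the \emph{strict} inequality in (A1) to show that the first step produces a multiplicative gain, i.e.\ ratios ${\rm k}_0=\frac{{\rm p}_1+1}{{\rm p}_0+1}>1$ and ${\rm r}_0=\frac{{\rm q}_1+1}{{\rm q}_0+1}>1$; (b) verify that this gain persists (in the paper it even improves, ${\rm k}_1>{\rm k}_0$, ${\rm r}_1>{\rm r}_0$), so that the exponents grow at least geometrically and finitely many iterations suffice; and (c) treat the asymmetric regime in which one of $P_k,Q_k$ crosses $N/2$ before the other (the paper's Step 3.1), where your update rule no longer applies as written because the component already lying in $W^{2,m}$ for $m\geq N/2$ belongs to every $L^{m+1}$ and the iteration for the remaining component takes a different form.

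Two smaller points. When an exponent hits $N/2$ exactly you do not get $L^\infty$ directly; but $W^{2,N/2}(\Omega)$ embeds into every finite $L^{m+1}(\Omega)$, so one more application of (B) and Calder\'on--Zygmund lands strictly above $N/2$ -- this is how the paper's Step 2 handles the case $P_0,Q_0\geq N/2$, and your argument should say so rather than jump to $L^\infty$. Finally, for the quantitative statement \eqref{est:main_apriori} your remark is in the right spirit, but to make it stick one should record, as the paper does, that the number of iterations is determined by $p,q,N$ alone and that each step's constants come from fixed Sobolev, Calder\'on--Zygmund and Schauder estimates, so that $\beta$ and $\mathcal{C}$ depend only on $p$, $q$, the constant $c$ in \eqref{ineq:growth_f}--\eqref{ineq:growth_g} and the norms $\|u\|_{W^{2,\frac{p+1}{p}}(\Omega)}$, $\|v\|_{W^{2,\frac{q+1}{q}}(\Omega)}$, and map bounded sets onto bounded sets.
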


For the sake of completeness, we include (in Section 2 below) a proof of this theorem that refines a regularity result in \cite{HulshofVanderVorst1993}.

\begin{remark}
As we shall see, the proof of Theorem \ref{theo:Theorem_1}  proceeds by a boot-strap argument. This will reveal that part {\it i.} in Theorem \ref{theo:Theorem_1} still holds replacing (A1) by
\begin{equation*}\label{eq:crit-hyperb_1}
\frac{1}{p+1} + \frac{1}{q+1} = 1 - \frac{2}{N}.
\end{equation*}
\end{remark}


\vskip 6pt

The paper is organized as follows. Section \ref{sect:proof_theo_1} presents the detailed proof of Theorem \ref{theo:Theorem_1}. Section \ref{sect:Technical_Rslts} deals with the technical results needed in the variational setting to study \eqref{4thorderBVP}-\eqref{eqn:navier_bd_cond} (some of the details are postponed to the Appendix at the end of this work) and with the proof of Theorem \ref{theo:regularity_4thorder}.  Section \ref{sect:local_minima} presents the detailed proof of Theorem \ref{theo:local_min}. In Section \ref{cond:PS} we verify that the energy $J_{\mu,\lambda}$ satisfies the Palais-Smale condition and Sections \ref{sect:prelim_mult} and \ref{sect:proof_theo_bif} are devoted to the {proofs of Theorems \ref{theo:Esp-caseMainTheo} and \ref{theo:bifur_diagram}.}

\section{Proof of Theorem \ref{theo:Theorem_1}}\label{sect:proof_theo_1}

Assume {\rm (A1)} and {\rm (B)}. In this section we prove Theorem \ref{theo:Theorem_1} using a bootstrap argument. Throughout the proof $\mathcal{K}>0$ stands for a universal constant depending exclusively on $\Omega$, $q$, $p$, $f$ and $g$.

\vskip 3pt
\underline{\it Step 1.} Assume that {$u\in L^{q+1}_{loc}(\Omega)$, $v\in L^{p+1}_{loc}(\Omega)$} and the pair $(u,v )$ is a very weak solution of \eqref{S1}. Then, {$|u|^{\frac{q+1}{p+1}p}, |v|^{p}\in L^{\frac{p+1}{p}}_{loc}(\Omega)$} and from \eqref{ineq:growth_f},
{$|f(x,u,v)|\in L^{\frac{p+1}{p}}_{loc}(\Omega)$}. Similarly, using \eqref{ineq:growth_g}, {$g(x,u,v)\in L^{\frac{q+1}{q}}_{loc}(\Omega)$}.

\vskip 3pt
From Theorem 3 in \cite{DiFrattaFiorenza2020}, $u\in W_{loc}^{2,\frac{p+1}{p}}(\Omega)$ and $u$ solves, pointwise a.e. in $\Omega$, the first equation in \eqref{S1}. Arguing analogously, using the second equation in \eqref{S1}, $v\in W_{loc}^{2,\frac{q+1}{q}}(\Omega)$ and $v$ solves pointwise a.e. in $\Omega$ the second equation in \eqref{S1}. 

\vskip 3pt

{Now assume that $u\in W^{1,\frac{p+1}{p}}_0(\Omega)$ and that $v\in W^{1,\frac{q+1}{q}}_0(\Omega)$. Theorem 9.15 in \cite{GilbargTrudinger} implies that $u\in \widetilde{W}$ and $v \in W$.} 

\vskip 3pt

Summarizing, $u\in \widetilde{W}$, $v\in W$ and $u,v$ solve pointwise a.e. in $\Omega$ the differential equations in \eqref{S1} and this proves part {\it i.}. 

\vskip 3pt
Notice also from Lemma 9.17 in \cite{GilbargTrudinger}, ${\rm (A1)}$, \eqref{ineq:growth_f}, \eqref{ineq:growth_g}, \eqref{SobolevEmbedding_p} and  \eqref{SobolevEmbedding_q} that
\begin{equation}\label{est:apriori_1}
\begin{aligned}
\|u\|^{\frac{p+1}{p}}_{\widetilde{W}} + \|v\|^{\frac{q+1}{q}}_{W}\leq &\mathcal{K}\big(1 + \|u\|^{q+1}_{L^{q+1}(\Omega)} + \|v\|^{p+1}_{L^{p+1}(\Omega)}\big)\\
\leq & \mathcal{K}\big(1 + S_{p,q}\|u\|^{q+1}_{\widetilde{W}} + S_{q,p}\|v\|^{p+1}_{W}\big).
\end{aligned}
\end{equation}

{Observe also that the above  discussion still holds true if instead of {\rm (A1)}, we assume \eqref{cond:crit_hyperbola}.}

\vskip 3pt

\underline{\it Step 2.} Denote
${\rm p}_0:=p$, ${\rm q}_0:=q$ and 
$$
P_0:=\frac{{\rm p}_0+1}{p}, \quad Q_0:=\frac{{\rm q}_0+1}{q}.
$$

From {\it Step 1}, $u\in W^{2,P_0}(\Omega)$ and $v\in W^{2,Q_0}(\Omega)$. 

\vskip 3pt
We prove the conclusions of the theorem by considering several separate cases. Assume first that $Q_0, P_0\geq \frac{N}{2}$.

\vskip 3pt
From Theorem 7.26 (part (i)) in \cite{GilbargTrudinger}, $\widetilde{W}$ and $W$ are simultaneously, continuously embedded in $L^{m+1}(\Omega)$ for any $m\geq 0$. From {\rm (B)}, $f(x,u,v),g(x,u,v)\in L^{m+1}(\Omega)$. Arguing in a similar manner as in {\it Step 1}, $u,v\in W^{2,m+1}(\Omega)\cap W^{1,m+1}_0(\Omega)$.

\vskip 3pt
Besides, Lemma 9.17 in \cite{GilbargTrudinger} and the Sobolev embeddings in  \eqref{SobolevEmbedding_p} and \eqref{SobolevEmbedding_q} yield
 that for $m>0$,
\begin{equation*}\label{est:apriori_2}
\|u\|_{W^{2,m+1}(\Omega)} + \|v\|_{W^{2,m+1}(\Omega)}\leq \mathcal{K}\big(1 + \|u\|^{\sigma}_{\widetilde{W}} + \|v\|^{\rho}_{W}\big),
\end{equation*}
where 
$$
\sigma= \max\Big\{1,\frac{q+1}{p+1}p,q\Big\}\quad \hbox{and} \quad \rho= \max\Big\{1,\frac{p+1}{q+1}q,p\Big\}.
$$

\vskip 3pt
Fix $m>0$ such that $\frac{N}{2}> m> \frac{N}{2}-1$. From Theorem 7.26 ( part (ii)) in \cite{GilbargTrudinger}, $W^{2,m+1}(\Omega)$ is continuously embedded in $C^{0,\tilde{\alpha}}(\overline{\Omega})$ for some $\tilde{\alpha} \in (0,1)$. Thus, $u,v\in C^{0,\tilde{\alpha}}(\overline{\Omega})$ with
\begin{equation*}\label{est:apriori_3}
\|u\|_{C^{0,\tilde{\alpha}}(\Omega)} \leq \mathcal{K}\|u\|_{W^{2,m+1}(\Omega)} \quad \hbox{and} \quad  \|v\|_{C^{0,\tilde{\alpha}}(\Omega)}\leq \mathcal{K}\|v\|_{W^{2,m+1}(\Omega)}.
\end{equation*}

Since $f,g\in C^{0,\alpha}_{loc}(\Omega \times \R \times \R)$, $f(\cdot,u(\cdot),v(\cdot))$ and $g(\cdot,u(\cdot),v(\cdot))$ belong to $C^{0,\beta}(\overline{\Omega})$ for some $\beta\in (0,\tilde{\alpha})$. 

\vskip 3pt
Theorem 6.24 in \cite{GilbargTrudinger} yields that $u,v\in C^{2,\beta}(\overline{\Omega})$. Even more, Theorem B.1 in \cite{StruweBook2008} together with {\rm (B)} imply that 
\begin{equation}\label{est:apriori_4}
\|u\|_{C^{2,\beta}(\overline{\Omega})} + \|v\|_{C^{2,\beta}(\overline{\Omega})}\leq \mathcal{K}\big(1 + \|u\|_{C^{0,\beta}(\overline{\Omega})}^{\sigma}+\|v\|_{C^{0,\beta}(\overline{\Omega})}^{\rho}\big).
\end{equation}

Thus, the conclusion of the theorem is proven in the case $Q_0,P_0\geq \frac{N}{2}$. Even more, tracing down the estimates \eqref{est:apriori_1}-\eqref{est:apriori_4}, we obtain an estimate of the type \eqref{est:main_apriori}.

\vskip 3pt
\underline{\it Step 3.} Assume that $N\geq 3$ and that $P_0,Q_0<\frac{N}{2}$. In this case, we proceed as follows. From \eqref{eq:crit-hyperb}, we observe that
\begin{equation}\label{ineq:subcrit1}
\frac{NP_0}{N-2P_0} >{\rm q}_0+1 \quad \hbox{and} \quad\frac{NQ_0}{N-2Q_0} > {\rm p_0}+1.
\end{equation}

Set 
\begin{equation}\label{eqn:def_p1}
{\rm p}_1:=\min\Big\{\frac{NQ_0}{N-2Q_0},\frac{p+1}{q+1}\frac{NP_0}{N-2P_0}\Big\} - 1 
\end{equation}
and
\begin{equation}\label{eqn:def_q1}
{\rm q}_1:=\min\Big\{\frac{NP_0}{N-2P_0},\frac{q+1}{p+1}\frac{NQ_0}{N-2Q_0}\Big\}-1
\end{equation}
and  $P_1:=\frac{{\rm p}_1+1}{p}$ and $Q_1:=\frac{{\rm q}_1+1}{q}$. From \eqref{ineq:subcrit1}, $
{\rm p}_1>{\rm p}_0$ and ${\rm q}_1>{\rm q}_0$. 

\vskip 3pt
Next, consider ${\rm k}_0, {\rm r}_0>1$ defined by
\begin{equation}\label{eqn:00}
{\rm p}_1+1:={\rm k}_0({\rm p}_0+1)\quad \hbox{and}
\quad {\rm q}_1+1:={\rm r}_0({\rm q}_0+1).
\end{equation}

\underline{\it Step 3.1.} Let now consider the case $P_1\geq \frac{N}{2}$ and $Q_1<\frac{N}{2}$. From the Sobolev embeddings, for any $m\geq 0$, $W^{2,P_1}(\Omega)$ is continuously embedded in $L^{m+1}(\Omega)$ and hence $u\in L^{m+1}(\Omega)$. Also, $W^{2,Q_1}(\Omega)$ is continuously embedded in $L^{\frac{N Q_1}{N-2Q_1}}(\Omega)$, so that $v\in L^{\frac{N Q_1}{N-2Q_1}}(\Omega)$.
 
\vskip 3pt
Set 
$$
{\rm p}_2:=\frac{N{Q}_1}{N-2Q_1} - 1  \quad \hbox{and} \quad {\rm q}_2:= \frac{q+1}{p+1}({\rm p}_2+1)-1
$$
and define
$$
{P}_2:= \frac{{\rm p}_2+1}{p} \quad \hbox{and} \quad Q_2:=\frac{{\rm q}_2+1}{q}. 
$$

Using that $Q_1>Q_0$, ${\rm p}_2>{\rm p}_1$ and ${\rm q}_2>{\rm q}_1$, we write
$$
{\rm p}_2+1={\rm k}_1({\rm p}_1+1) \quad \hbox{and} \quad {\rm q}_2+1={\rm r}_1({\rm q}_1+1),
$$
where ${\rm k}_1,{\rm r}_1>1$.

\vskip 3pt
Since,
$$
|u|^{\frac{q+1}{p+1}p},\,|v|^p\in L^{P_2}(\Omega) \quad \hbox{and} \quad |u|^q, |v|^{\frac{p+1}{q+1}q} \in L^{Q_2}(\Omega),
$$
we may argue as in {\it Step 1} to find that $u\in W^{2,P_2}(\Omega)\cap W^{1,P_2}_0(\Omega)$ and $v\in W^{2,Q_2}(\Omega)\cap W^{1,Q_2}_0(\Omega)$.

\vskip 3pt
Notice that $P_2>P_1\geq \frac{N}{2}$. If $Q_2 \geq \frac{N}{2}$, proceeding as in {\it Step 2}, we obtain $u,v\in C^{2,\beta}(\overline{\Omega})$ for some $\beta\in (0,1)$.

\vskip 3pt
Observe also that, similarly as we did with estimates \eqref{est:apriori_1}-\eqref{est:apriori_4}, the Sobolev embeddings used in this part can be traced down, to yield an estimate of the type \eqref{est:main_apriori}.

\vskip 3pt
Next, if $Q_2<\frac{N}{2}$, we proceed as follows. From the definition of ${\rm p}_1$ and ${\rm q}_1$,
$$
\frac{N Q_2}{N-2Q_2}>{\rm k}_1\frac{N Q_1}{N-2Q_1}.
$$

Set 
$$
{\rm p}_3:=\frac{N{Q}_2}{N-2{Q}_2} - 1  \quad \hbox{and} \quad {\rm q}_3+1 = \frac{{q}+1}{p+1}({\rm p}_3+1)
$$
and define
$$
{P}_3:= \frac{{\rm p}_3+1}{p} \quad \hbox{and} \quad Q_3:=\frac{{\rm q}_3+1}{q}. 
$$

Since ${\rm p}_3+1>{\rm k}_1({\rm p}_2+1)$ and ${\rm q}_3>{\rm q}_2$, we write
$$
{\rm p}_3+1:={\rm k}_2({\rm p}_2+1)\quad \hbox{and}
\quad {\rm q}_3+1:={\rm r}_2({\rm q}_2+1),
$$
where ${\rm k}_2,{\rm r}_2>1$ with ${\rm k}_2>{\rm k}_1$.

\vskip 3pt
Observe also that
$$
|u|^{\frac{q+1}{p+1}p},\,|v|^p\in L^{P_3}(\Omega)\quad \hbox{and} \quad |u|^q, |v|^{\frac{p+1}{q+1}q} \in L^{Q_3}(\Omega),
$$
and hence arguing as in {\it Step 1}, $u\in W^{2,P_3}(\Omega)\cap W^{1,P_3}_0(\Omega)$ and $v\in W^{2,Q_3}(\Omega)\cap W^{1,Q_3}_0(\Omega)$.

\vskip 3pt
Since $P_3>P_2>\frac{N}{2}$, we argue as above. In the case, $Q_3\geq \frac{N}{2}$, proceeding similarly as in {\it Step 2}, $u,v\in C^{2,\beta}(\overline{\Omega})$. 

\vskip 3 pt
In the case $Q_3<\frac{N}{2}$, we may iterate the above procedure. After a finite number of iterations,  we produce {an} increasing finite list of Sobolev exponents $P_k$ and $Q_{k}$ such that $u\in W^{2,P_k}(\Omega)\cap W^{1,P_k}_0(\Omega)$ and $v\in W^{2,Q_k}(\Omega)\cap W^{1,Q_k}_0(\Omega)$, $P_{k+1}>P_k> \frac{N}{2}$ and
$$
\frac{N Q_{k+1}}{N-2Q_{k+1}}>{\rm k}_1\frac{N Q_k}{N-2Q_k}.
$$
 
Thus, for some $k\in \mathbb{N}$ we find that $P_k,Q_k>\frac{N}{2}$ and proceeding as in {\it Step 2}, $u,v\in C^{2,\beta}(\overline{\Omega})$ for some $\beta\in (0,1)$. Also, a tracing down of the Sobolev embeddings used yields an estimate of the type \eqref{est:main_apriori}.

\vskip 3pt
This proves the result in the case $P_1\geq \frac{N}{2}$ and $Q_1<\frac{N}{2}$. A similar and symmetric argument proves the result in the case $P_1< \frac{N}{2}$ and $Q_1\geq \frac{N}{2}$.

\vskip 3pt
\underline{\it Step 3.2.} Assume now that 
$$
\frac{N}{2}> \max\left\{P_1,Q_1\right\}.
$$

A direct calculation {with $k_0,r_0$ defined in \eqref{eqn:00},} shows that 
\begin{equation}\label{ineq:Sobolev_exp_1}
\frac{N P_1}{N-2P_1}>{\rm r}_0\frac{N P_0}{N-2P_0} \quad \hbox{and} \quad  \frac{N Q_1}{N-2Q_1}>{\rm k}_0\frac{N Q_0}{N-2Q_0}.
\end{equation}

Thus,
$$
\frac{NP_1}{N-2P_1} >{\rm q}_1+1 \quad \hbox{and} \quad\frac{NQ_1}{N-2Q_1} > {\rm p_1}+1.
$$

Set 
$$
{\rm p}_2:=\min\Big\{\frac{NQ_1}{N-2Q_1},\frac{p+1}{q+1}\frac{NP_1}{N-2P_1}\Big\} - 1
$$
and
$$
{\rm q}_2:=\min\Big\{\frac{NP_1}{N-2P_1},\frac{q+1}{p+1}\frac{NQ_1}{N-2Q_1}\Big\}-1.
$$

We also set $P_2:=\frac{{\rm p}_2+1}{p}$ and $Q_2:=\frac{{\rm q}_2+1}{q}$. From \eqref{eqn:def_p1}, \eqref{eqn:def_q1} and  \eqref{ineq:Sobolev_exp_1}, $
{\rm p}_2+1 >{\rm p}_1+1$ and ${\rm q}_2+1 >{\rm q}_1+1$.

\vskip 3pt
Thus, we can consider ${\rm k}_1,{r}_1>1$ defined by
$$
{\rm p}_2+1:={\rm k}_1({\rm p}_1+1)\quad \hbox{and}
\quad {\rm q}_2+1:={\rm r}_1({\rm q}_1+1)
$$
with ${\rm k}_1>{\rm k}_0$ and ${\rm r}_1>{\rm r}_0$.

\vskip 3pt
Assuming again that $\frac{N}{2}>\max \{P_2,Q_2\}$ (otherwise we argue as in {\it Step 3.1}) and noticing that  ${\rm p}_2>{\rm k}_0 {\rm p}_1$ and ${\rm q}_2>{\rm r}_0 {\rm q}_1$, 
$$
\frac{N P_2}{N-2P_2}>{\rm k}_0\frac{N P_1}{N-2P_1}\quad \hbox{and} \quad\frac{N Q_2}{N-2Q_2}>{\rm r}_0\frac{N Q_1}{N-2Q_1}.
$$

Next, we iterate the above procedure. This iterative procedure stops after a finite number of iterations yielding Sobolev exponents $P_k$ and $Q_k$ 
so that 
$$
\frac{N P_{k+1}}{N-2P_{k+1}}>{\rm k}_0\frac{N P_k}{N-2P_k}\quad \hbox{and} \quad\frac{N Q_{k+1}}{N-2Q_{k+1}}>{\rm r}_0\frac{N Q_k}{N-2Q_k}.
$$

Thus, 
$$
\frac{N}{2}\leq \min\{P_k,Q_k\}.
$$

We can then proceed as in the {\it Step 2} to find that $u,v\in C^{2,\beta}(\overline{\Omega})$ for some $\beta \in (0,1)$ and $u$ and $v$ are classical solutions to \eqref{S1} with an estimate of the type \eqref{est:main_apriori}. This proves the result in the case $P_1,Q_1<\frac{N}{2}$ and concludes the proof of the theorem. 

\QEDB

\section{Technical Results}\label{sect:Technical_Rslts}
In this section we describe some of the technical results used in our upcoming developments. For the sake of clarity, we postpone the detailed proofs to the appendix.

\vskip 3pt
Assume throughout hypothesis {\rm (A2)}. For $\lambda, \mu \geq 0$ and $\zeta\in \R$, write
$$
f(\lambda,\zeta):= \lambda |\zeta|^{r-1}\zeta + |\zeta|^{p-1}\zeta, \qquad F(\lambda,\zeta):=\int_0^{\zeta}f(\lambda,\zeta)d\zeta
$$
and
$$
g(\mu,\zeta):= \mu|\zeta|^{s-1}\zeta + |\zeta|^{q-1}\zeta, \qquad G(\mu,\zeta):= \int_{0}^{\zeta} g(\mu, \zeta)d\zeta.
$$

Observe that 
\begin{itemize}
\item[i.] $F,G\in  C^1(\R\times \R)\cap C^{\infty}(\R \times \R\setminus\{0\})$;

\vskip 3pt
\item[ii.] for $\lambda,\mu\in [0,\infty)$ fixed, the functions $F(\lambda,\cdot)$ and $G(\mu,\cdot)$ are even and convex;

\vskip 3pt
\item[iii.] for $\lambda,\mu\in [0,\infty)$ fixed, $f(\lambda,\cdot),g(\mu, \cdot):\R\to \R$ are strictly increasing;

\vskip 3pt

\item[iv.] for $\lambda,\mu\in (0,\infty)$ fixed, $f(\lambda,\cdot)$ and $g(\mu,\cdot)$ have {\it concave} geometry near zero, i.e., 
\begin{equation*}\label{eq:conc_conv_geom1}
\lim \limits_{\zeta\to 0} \frac{f(\lambda,\zeta)}{|\zeta|^{r-1}\zeta}=\lambda \quad \hbox{and} \quad \lim \limits_{\zeta\to 0} \frac{g(\mu,\zeta)}{|\zeta|^{s-1}\zeta}=\mu.  
\end{equation*}
\end{itemize}

\begin{remark}We stress that the discussion below is carried out for the function $g$, but an identical analysis can be done for the function $f$. 
\end{remark}

Next, consider the function $\psi:[0,\infty) \times \R \to \R$, which for any fix $\mu \geq 0$
\begin{equation*}
\label{eq:inv_g_mu}
\psi (\mu ,\cdot):\R \to \R, \quad \zeta=\psi(\mu,\theta)
\end{equation*}denotes the inverse function of $g(\mu,\cdot)$. 

\vskip 3pt
{Assume for the moment that $q>1>s$.} Let $\zeta_\mu:= \left[\frac{s(1-s)}{q(q-s)}\right]^{\frac{1}{q-s}}\mu^{\frac{1}{q-s}}$ be the only non-negative number such that
$$
{\partial^2_{\zeta} g(\mu,\zeta_\mu) =\mu s(s-1)|\zeta_\mu|^{s-3}\zeta_\mu + q(q-1)|\zeta_\mu|^{q-3}\zeta_\mu=0}
$$
and
$$
\partial^2_{\zeta} g(\mu, \zeta)(\zeta - \zeta_\mu) >0 \quad \hbox{for} \quad \zeta >0.
$$ 

Notice that when $\mu>0$, $\psi(\mu,\cdot)$ changes {from convex to concave,} exactly once in $(0,\infty)$ at the point 
\begin{equation}\label{theta*}
\theta_\mu:=g(\mu,\zeta_\mu)= C_{q,s}\mu^{\frac{q}{q-s}},
\end{equation}
with $C_{q,s}:=\left[\frac{s(1-s)}{q(q-s)}\right]^{\frac{s}{q-s}} + \left[\frac{s(1-s)}{q(q-s)}\right]^{\frac{q}{q-s}}.$ When $\mu=0$ or $1>q>s>0$, we set $\zeta_\mu=\theta_\mu=0$. Observe that $\psi(0,\theta)=|\theta|^{\frac{1}{q}-1}\theta$ for $\theta\in \R$.

\vskip 3pt
In the rest of this section we present some technical lemmas, whose purpose is to collect relevant properties of $\psi(\mu,\cdot)$. 
\begin{lemma}\label{asymptoticpsizeroinfinity}
For every $\mu \geq 0$:
\begin{itemize}
\item[(i)] {$\psi (\mu, \cdot)\in C^1(\R\setminus\{0\})\cap C(\R)$ with $\psi(\mu, 0)=0$. If $q\geq 1$, then $\psi(\mu,\cdot)\in C^{0,\frac{1}{q}}(\R)$. If either $\mu>0$ or $q\in (0,1)$, then $\psi(\mu,\cdot)\in C^1(\R)$ and $\partial_{\theta} \psi (\mu,0)=0$;}

\item[(ii)] $\psi(\mu,\cdot)$ is strictly increasing and odd;

\item[(iii)] $\psi(\mu, \theta)\theta >0$ for any $\theta \neq 0$;

\item[(iv)] {$\lim \limits_{|\theta|\to \infty} \frac{\psi(\mu,\theta)}{|\theta|^{\frac{1}{q}-1}\theta}=1 $ and for $\mu>0$, $\lim \limits_{|\theta|\to 0} \frac{\psi(\mu,\theta)}{|\theta|^{\frac{1}{s}-1}\theta}=\mu^{-\frac{1}{s}}$.}
\end{itemize}

Even more, $\psi\in C^1((0,\infty)\times \R)$ and for $\mu>0$ and $\theta \neq 0$
\begin{equation*}\label{eqn:der_psi_mu}
\partial_{\mu}\psi(\mu,\theta)=-\frac{|\psi(\mu,\theta)|^{s-1}\psi(\mu,\theta)}{\mu s |\psi(\mu,\theta)|^{s-1}+q|\psi(\mu,\theta)|^{q-1}}
\end{equation*}
and $\partial_{\mu}\psi(\mu,0)=0$.
\end{lemma}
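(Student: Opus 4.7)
The proof relies on the fact that, under {\rm (A2)}, $g(\mu,\cdot)$ is an odd, strictly increasing, continuous surjection of $\R$ onto $\R$, and a $C^1$-diffeomorphism of $\R\setminus\{0\}$ onto itself. The plan is to invoke the inverse function theorem and to compute explicit asymptotic expansions by factoring out the dominant term in the identity $g(\mu,\psi(\mu,\theta))=\theta$.

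First, I would verify that $\partial_\zeta g(\mu,\zeta)=\mu s|\zeta|^{s-1}+q|\zeta|^{q-1}>0$ for every $\zeta\neq 0$, that $g(\mu,\cdot)$ is odd with $g(\mu,0)=0$, and that $g(\mu,\zeta)\to\pm\infty$ as $\zeta\to\pm\infty$. Hence $\psi(\mu,\cdot)$ exists, is odd, strictly increasing, and continuous on $\R$ with $\psi(\mu,0)=0$, which yields parts (ii), (iii), and the continuity statement of (i). The inverse function theorem then provides $\psi(\mu,\cdot)\in C^1(\R\setminus\{0\})$ with
\begin{equation*}
\partial_\theta\psi(\mu,\theta)=\frac{1}{\partial_\zeta g(\mu,\psi(\mu,\theta))}.
\end{equation*}

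For the $C^1$-regularity at the origin in part (i): if $\mu>0$ then $s<1$ (from {\rm (A2)}) implies $\mu s|\zeta|^{s-1}\to\infty$ as $\zeta\to 0$; if $\mu=0$ and $q\in(0,1)$ then $\partial_\zeta g(0,\zeta)=q|\zeta|^{q-1}\to\infty$. In both cases the displayed formula forces $\partial_\theta\psi(\mu,\theta)\to 0$ as $\theta\to 0$. For the H\"older claim when $q\geq 1$, I would use the elementary inequality $(a-b)^q\leq a^q-b^q$ valid for $0\leq b\leq a$ and $q\geq 1$, applied to $\zeta_i=\psi(\mu,\theta_i)$ with $0\leq\theta_2<\theta_1$, to obtain
\begin{equation*}
(\zeta_1-\zeta_2)^q\leq\zeta_1^q-\zeta_2^q\leq g(\mu,\zeta_1)-g(\mu,\zeta_2)=\theta_1-\theta_2,
\end{equation*}
and then extend the estimate to all of $\R$ by oddness of $\psi(\mu,\cdot)$ together with the concavity estimate $a^{1/q}+b^{1/q}\leq 2^{1-1/q}(a+b)^{1/q}$ for $a,b\geq 0$.

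For the asymptotics in (iv), the strategy is to write $\theta=g(\mu,\psi(\mu,\theta))$ and divide by the expected leading term. Using $s<q$ (which follows from {\rm (A2)}), at infinity one has
\begin{equation*}
\frac{\theta}{|\psi(\mu,\theta)|^{q-1}\psi(\mu,\theta)}=1+\mu|\psi(\mu,\theta)|^{s-q}\longrightarrow 1\quad\text{as }|\theta|\to\infty,
\end{equation*}
since $|\psi(\mu,\theta)|\to\infty$; inverting yields the first limit. For the limit at $0$ with $\mu>0$, a parallel factorization by $\mu|\psi|^{s-1}\psi$ combined with $q>s$ gives $\psi(\mu,\theta)/(|\theta|^{1/s-1}\theta)\to\mu^{-1/s}$. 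Finally, joint $C^1$ regularity on $(0,\infty)\times\R$ follows from the implicit function theorem applied to $H(\mu,\zeta,\theta):=g(\mu,\zeta)-\theta$ at any point where $\psi(\mu_0,\theta_0)\neq 0$, which also yields the stated formula for $\partial_\mu\psi$; at $\theta=0$ the identity $\psi(\mu,0)\equiv 0$ gives $\partial_\mu\psi(\mu,0)=0$, and continuity there is verified by inserting the asymptotic from (iv) into the formula. The main delicacy lies in matching the various asymptotic regimes at the origin so that (i) and (iv) hold simultaneously across all the admissible subcases of $\mu$ and $q$.
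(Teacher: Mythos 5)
Your proposal is correct and follows essentially the same route as the paper's proof: monotonicity and surjectivity of $g(\mu,\cdot)$, the inverse/implicit function theorem away from the origin, and the identity $g(\mu,\psi(\mu,\theta))=\theta$ to extract the limits in (iv) and the formula for $\partial_{\mu}\psi$. If anything, you supply details the paper's appendix proof leaves implicit, namely the $C^{0,\frac{1}{q}}$ estimate for $q\geq 1$ and the verification of the joint $C^1$ regularity (continuity of the partial derivatives) at $\theta=0$, where the implicit function theorem does not apply directly.
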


Next, we study some properties of the function $\Psi:[0,\infty)\times \R \to \R$ defined by
$$
\Psi(\mu, \theta) := \int_{0}^{\theta} \psi(\mu, \hat{\theta})d\hat{\theta} \quad\hbox{for}\quad \mu\geq 0, \quad \theta \in \R.
$$

\begin{lemma}\label{asymptoticsPSI}
Let $\mu\geq 0$. The function $\Psi(\mu, \cdot)$ satisfies that:
\begin{itemize}
\item[(i)] {$\Psi(\mu,\cdot)\in C^2(\R\setminus\{0\})\cap C^1(\R)$ with $\Psi(\mu,0)=\partial_{\theta}\Psi(\mu,0)=0$. If $q\geq 1$, then $\Psi(\mu,\cdot)\in C^{1,\frac{1}{q}}(\R)$. If either $\mu>0$ or $q\in (0,1)$, then $\Psi(\mu,\cdot)\in C^2(\R)$ with $\partial^2_{\theta}\Psi(\mu,0)=0$;}

\item[(ii)] $\Psi(\mu,\cdot)$ is strictly convex and even;

\item[(iii)]  $\Psi(\mu, \theta) >0$ for $\theta \neq 0$;

\item[(iv)] for every $\theta \in \R$, $\Psi(\mu,\theta)=  \Psi(\mu,|\theta|).$

\item[(v)] {$\lim \limits_{|\theta| \to \infty} \frac{\Psi(\mu, \theta)}{|\theta|^{\frac{q+1}{q}}} =\frac{q}{q+1}$ and for $\mu>0$, $\lim \limits_{|\theta| \to 0} \frac{\Psi(\mu, \theta)}{|\theta|^{\frac{s+1}{s}}} =\frac{s}{s+1}\mu^{-\frac{1}{s}}$;}

\item[(vi)] The function $\theta \mapsto \frac{\Psi(\mu,\theta)}{\theta}$ is strictly increasing. 
\end{itemize}
\end{lemma}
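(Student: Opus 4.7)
The strategy is to transfer each property of $\psi(\mu,\cdot)$ established in Lemma \ref{asymptoticpsizeroinfinity} to the corresponding property of its antiderivative $\Psi(\mu,\cdot)$ by using the fundamental theorem of calculus. Throughout, $\mu\geq 0$ is fixed and $\partial_{\theta}\Psi(\mu,\theta)=\psi(\mu,\theta)$ on the relevant domain.

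\medskip

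\textbf{Part (i): regularity.} Since $\psi(\mu,\cdot)\in C(\R)$ with $\psi(\mu,0)=0$, the fundamental theorem of calculus yields $\Psi(\mu,\cdot)\in C^1(\R)$ with $\Psi(\mu,0)=0$ and $\partial_\theta\Psi(\mu,0)=\psi(\mu,0)=0$. On $\R\setminus\{0\}$ we have $\psi(\mu,\cdot)\in C^1$, so $\Psi(\mu,\cdot)\in C^2(\R\setminus\{0\})$. If $q\geq 1$, then $\psi(\mu,\cdot)\in C^{0,1/q}(\R)$ by Lemma \ref{asymptoticpsizeroinfinity}(i), and the integral of a globally Hölder function lies in $C^{1,1/q}(\R)$. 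If either $\mu>0$ or $q\in(0,1)$, then $\psi(\mu,\cdot)\in C^1(\R)$ with $\partial_\theta\psi(\mu,0)=0$, so $\Psi(\mu,\cdot)\in C^2(\R)$ with $\partial_\theta^2\Psi(\mu,0)=\partial_\theta\psi(\mu,0)=0$.

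\medskip

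\textbf{Parts (ii)--(iv).} Strict convexity follows from strict monotonicity of $\psi(\mu,\cdot)$ (Lemma \ref{asymptoticpsizeroinfinity}(ii)): for $\theta\in\R\setminus\{0\}$ one has $\partial_\theta^2\Psi(\mu,\theta)=\partial_\theta\psi(\mu,\theta)>0$, and globally the integral of a strictly increasing function is strictly convex. Evenness of $\Psi(\mu,\cdot)$ is immediate from oddness of $\psi(\mu,\cdot)$ via the change of variable $\hat\theta\mapsto-\hat\theta$. Positivity in (iii) follows from (iii) of Lemma \ref{asymptoticpsizeroinfinity}: for $\theta>0$ the integrand is positive, so $\Psi(\mu,\theta)>0$; for $\theta<0$ evenness gives $\Psi(\mu,\theta)=\Psi(\mu,-\theta)>0$. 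Part (iv) is just a restatement of evenness.

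\medskip

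\textbf{Part (v): asymptotics.} These follow from Lemma \ref{asymptoticpsizeroinfinity}(iv) by L'H\^{o}pital's rule (both numerator and denominator are $C^1$ and vanish simultaneously at the appropriate limit point). For $|\theta|\to\infty$,
\[
\lim_{|\theta|\to\infty}\frac{\Psi(\mu,\theta)}{|\theta|^{(q+1)/q}}=\lim_{|\theta|\to\infty}\frac{\psi(\mu,\theta)}{\frac{q+1}{q}|\theta|^{1/q-1}\theta}=\frac{q}{q+1},
\]
and analogously for $|\theta|\to 0$ when $\mu>0$ one obtains the factor $\frac{s}{s+1}\mu^{-1/s}$.

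\medskip

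\textbf{Part (vi): monotonicity of $\Psi/\theta$.} This is the step that requires most care. By strict convexity of $\Psi(\mu,\cdot)$ together with $\Psi(\mu,0)=0$, the tangent line inequality at $\theta\neq 0$ evaluated at $0$ gives $0>\Psi(\mu,\theta)-\psi(\mu,\theta)\theta$, i.e.\ $\psi(\mu,\theta)\theta>\Psi(\mu,\theta)$ for all $\theta\neq 0$. Hence for $\theta\neq 0$,
\[
\frac{d}{d\theta}\!\left(\frac{\Psi(\mu,\theta)}{\theta}\right)=\frac{\psi(\mu,\theta)\theta-\Psi(\mu,\theta)}{\theta^2}>0,
\]
so $\Psi(\mu,\cdot)/\theta$ is strictly increasing on $(-\infty,0)$ and on $(0,\infty)$ separately. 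Combining this with the sign information from (iii) (which forces $\Psi(\mu,\theta)/\theta<0$ on $(-\infty,0)$ and $>0$ on $(0,\infty)$) and the L'H\^{o}pital limit $\lim_{\theta\to 0}\Psi(\mu,\theta)/\theta=\psi(\mu,0)=0$ yields that the map extends continuously through $0$ and is strictly increasing on all of $\R$. The main (minor) obstacle here is making sure the two monotone branches match up across $\theta=0$, which is handled by this sign/limit argument.
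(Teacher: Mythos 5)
Your proof is correct, but it takes a genuinely different route from the paper's for the two nontrivial parts, (v) and (vi). The paper first derives, by the change of variables $\zeta=\psi(\mu,\hat\theta)$ and integration by parts, the Legendre-type identity $\Psi(\mu,\theta)+G(\mu,\psi(\mu,\theta))=\psi(\mu,\theta)\,\theta$, where $G(\mu,\cdot)$ is the primitive of $g(\mu,\cdot)$; it then obtains the limits in (v) by dividing this identity by $|\theta|^{\frac{q+1}{q}}$ (resp. $|\theta|^{\frac{s+1}{s}}$) and computing the resulting quotients explicitly in the variable $\zeta$, and it obtains (vi) by writing $\theta\psi(\mu,\theta)-\Psi(\mu,\theta)=G(\mu,\psi(\mu,\theta))>0$ in the quotient-rule formula. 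You instead get (v) from Lemma \ref{asymptoticpsizeroinfinity}(iv) via L'H\^opital (which is legitimate: the case $|\theta|\to\infty$ is the $\infty/\infty$ form, the case $|\theta|\to 0$, $\mu>0$, is $0/0$), and (vi) from the tangent-line inequality for the strictly convex function $\Psi(\mu,\cdot)$ at $\theta$ evaluated at $0$, which gives $\psi(\mu,\theta)\theta>\Psi(\mu,\theta)$ for $\theta\neq 0$; your extra sign-and-limit argument gluing the two monotone branches across $\theta=0$ is sound and in fact slightly more complete than the paper, which only checks positivity of the derivative on $\R\setminus\{0\}$. What your route saves in machinery it loses as a byproduct: the identity $\Psi+G\circ\psi=\psi\cdot\theta$ established in the paper's proof is reused immediately afterwards to prove the comparison inequality \eqref{ComparisonInequality} of Lemma \ref{inequalitiespsi}, so avoiding it here means it would have to be proved separately there; your L'H\^opital/convexity arguments, on the other hand, are more elementary and rely only on the already-stated properties of $\psi(\mu,\cdot)$.
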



We finish this discussion by collecting some inequalities, that will be useful in upcoming sections. 

\begin{lemma}\label{inequalitiespsi}
For any $\mu \geq 0$ and $\theta\in \R$,
\begin{equation}\label{ComparisonInequality}
\frac{q}{q+1} \psi(\mu,\theta)\theta \geq  \Psi(\mu,\theta) \geq \frac{s}{s+1} \psi(\mu,\theta)\theta.
\end{equation}

{Moreover, if $\mu>0$}, there exist constants $\hat{C}_{q,s}, \hat{c}_{q,s}\in (0,1)$ such that 
\begin{equation}\label{growthpsi}
|\theta|^{\frac{q+1}{q}}\geq \psi(\mu, \theta)\theta \geq \left\{
\begin{aligned}
\hat{C}_{q,s}|\theta|^{\frac{q+1}{q}}, & \quad  \hbox{for} & |\theta| \geq \theta_\mu\,\\
\mu^{-\frac{1}{s}} \hat{c}_{q,s}|\theta|^{\frac{s+1}{s}}, & \quad  \hbox{for} & |\theta| \leq \theta_\mu,
\end{aligned}
\right.
\end{equation}
where $\theta_\mu$ is defined in \eqref{theta*}.
\end{lemma}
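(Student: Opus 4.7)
The plan is to exploit the explicit parametrization coming from the inverse relation $\theta=g(\mu,\zeta)=\mu\zeta^s+\zeta^q$ whenever $\zeta=\psi(\mu,\theta)$; by oddness (Lemma \ref{asymptoticpsizeroinfinity}(ii)) it suffices to treat $\theta\geq 0$, so that $\zeta\geq 0$ too. Both parts of the lemma will then follow from rewriting $\Psi$ and $\psi\theta$ entirely in terms of $\zeta$, and from controlling the relative size of the two summands $\mu\zeta^s$ and $\zeta^q$ on either side of the transition value $\zeta_\mu$.

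For the comparison inequality \eqref{ComparisonInequality}, I would apply the change of variables $\hat\theta=g(\mu,\hat\zeta)$ in $\Psi(\mu,\theta)=\int_0^\theta \psi(\mu,\hat\theta)\,d\hat\theta$ to obtain the closed form
\[
\Psi(\mu,\theta)=\int_0^\zeta\hat\zeta\bigl(s\mu\hat\zeta^{s-1}+q\hat\zeta^{q-1}\bigr)d\hat\zeta=\frac{s}{s+1}\mu\zeta^{s+1}+\frac{q}{q+1}\zeta^{q+1},
\]
and compare it with $\psi(\mu,\theta)\theta=\mu\zeta^{s+1}+\zeta^{q+1}$. The double inequality in \eqref{ComparisonInequality} then reduces to two instances of the elementary bound $\frac{s}{s+1}\leq\frac{q}{q+1}$, applied separately to the $\mu\zeta^{s+1}$ and $\zeta^{q+1}$ monomials; both are valid because $s<q$ by hypothesis {\rm (A2)}.

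For the growth estimate \eqref{growthpsi}, the upper bound $\psi(\mu,\theta)\theta\leq|\theta|^{(q+1)/q}$ is immediate from $\theta=\mu\zeta^s+\zeta^q\geq\zeta^q$, which yields $\zeta\leq\theta^{1/q}$ and hence $\zeta\theta\leq \theta^{(q+1)/q}$. The two-sided lower bounds are the delicate part. Here I plan to exploit the structural fact that $\zeta_\mu$ is proportional to $\mu^{1/(q-s)}$ (from its definition as the inflection point of $g(\mu,\cdot)$), so that both quantities
\[
K:=1+\mu\,\zeta_\mu^{\,s-q}\qquad\text{and}\qquad L:=1+\mu^{-1}\zeta_\mu^{\,q-s}
\]
depend only on $q$ and $s$ and \emph{not} on $\mu$. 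For $\zeta\geq\zeta_\mu$, using $s<q$ one obtains $\mu\zeta^{s-q}\leq \mu\zeta_\mu^{s-q}$, so the factorization $\theta=\zeta^q(1+\mu\zeta^{s-q})\leq K\zeta^q$ inverts to $\zeta\geq K^{-1/q}\theta^{1/q}$, giving the upper branch of \eqref{growthpsi} with $\hat C_{q,s}:=K^{-1/q}\in(0,1)$. For $\zeta\leq\zeta_\mu$, the symmetric factorization $\theta=\mu\zeta^s(1+\mu^{-1}\zeta^{q-s})\leq L\mu\zeta^s$ yields $\zeta\geq L^{-1/s}\mu^{-1/s}\theta^{1/s}$ and the lower branch with $\hat c_{q,s}:=L^{-1/s}\in(0,1)$.

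The main, and essentially the only, obstacle is verifying the $\mu$-independence of $K$ and $L$; this structural cancellation is precisely what makes the constants $\hat C_{q,s}$ and $\hat c_{q,s}$ uniform in $\mu$, and everything else reduces to direct algebra. The degenerate case $q\leq 1$, in which $\zeta_\mu=\theta_\mu=0$ and the second branch becomes vacuous, fits into the same framework without modification.
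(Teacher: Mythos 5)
Your proof is correct and follows essentially the same route as the paper: the comparison inequality comes from the parametrization $\theta=g(\mu,\zeta)$ (your closed form for $\Psi$ is exactly the paper's identity $\Psi(\mu,\theta)+G(\mu,\psi(\mu,\theta))=\psi(\mu,\theta)\theta$ written out), and the growth bounds use the same two-branch comparison of $\zeta g(\mu,\zeta)$ with $|\zeta|^{q+1}$ and $\mu|\zeta|^{s+1}$ across $\zeta_\mu$, your constants $K$ and $L$ being precisely the paper's $\tilde{C}_{q,s}=1+\tfrac{q(q-s)}{s(1-s)}$ and $\tilde{c}_{q,s}=1+\tfrac{s(1-s)}{q(q-s)}$. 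Only your closing remark overstates matters slightly: when $1>q>s$ the paper sets $\zeta_\mu=\theta_\mu=0$, so the bound $\mu\zeta^{s-q}\le\mu\zeta_\mu^{s-q}$ degenerates there; like the paper's own proof, the two-branch estimate is genuinely an argument for the case $q>1>s$, where $\theta_\mu>0$.
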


{\begin{remark}\label{rem:I} In the case $\mu=0$, $\psi(0,\theta)=|\theta|^{\frac{1}{q}-1}\theta$ for $\theta \in \R$ and so the inequalities in \eqref{growthpsi} read as $\psi(0,\theta)\theta=|\theta|^{\frac{q+1}{q}}$ for $\theta\in \R$.
\end{remark}}

\vskip 3pt
Next, we state a strong monotonicity property for $\psi(\mu,\cdot)$.

\begin{lemma}\label{technicalPSineq} There exists a constant $C=C_{s,q}>0$ such that given $\mu \geq 0$ and given any $\theta_1, \theta_2 \in \R$ with $\theta_1\neq \theta_2$,
\begin{equation}\label{monotonicitypsi}
\begin{small}
\Bigl(\psi(\mu,\theta_1)- \psi(\mu,\theta_2)\Bigr)(\theta_1 - \theta_2) \geq \frac{C|\theta_1 - \theta_2|^{\frac{s+1}{s}}}{\left(\mu^{\frac{1}{s}} + |\psi(\mu,\theta_1)|^{\frac{q-s}{s}}+ |\psi(\mu,\theta_2)|^{\frac{q-s}{s}}\right)}.
\end{small}
\end{equation}
\end{lemma}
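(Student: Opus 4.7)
The plan is to convert the claimed lower bound on the increment of $\psi(\mu,\cdot)$ into an equivalent upper H\"older-type bound on the increment of $g(\mu,\cdot)$, where the inequality can be verified term-by-term. Without loss of generality assume $\theta_1>\theta_2$ and set $\zeta_i:=\psi(\mu,\theta_i)$, so that $\theta_i=g(\mu,\zeta_i)$ and, by the strict monotonicity of $g(\mu,\cdot)$, $\zeta_1>\zeta_2$. I claim that \eqref{monotonicitypsi} follows from the dual estimate
\[
g(\mu,\zeta_1)-g(\mu,\zeta_2)\;\leq\;C\bigl(\mu+|\zeta_1|^{q-s}+|\zeta_2|^{q-s}\bigr)(\zeta_1-\zeta_2)^{s},
\]
for a constant $C=C_{q,s}>0$. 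Indeed, raising this to the $1/s$-th power and applying the elementary bound $(a+b+c)^{1/s}\leq 3^{(1-s)/s}(a^{1/s}+b^{1/s}+c^{1/s})$ (valid since $1/s>1$) on the right-hand side, then dividing by $\mu^{1/s}+|\zeta_1|^{(q-s)/s}+|\zeta_2|^{(q-s)/s}$ and multiplying through by the positive quantity $\theta_1-\theta_2=g(\mu,\zeta_1)-g(\mu,\zeta_2)$, yields exactly \eqref{monotonicitypsi}.

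To establish the dual estimate I would split the increment of $g$ along its two summands. For the concave summand $\mu|\zeta|^{s-1}\zeta$, since $0<s<1$ the classical H\"older-type inequality for odd powers yields at once $\mu\bigl||\zeta_1|^{s-1}\zeta_1-|\zeta_2|^{s-1}\zeta_2\bigr|\leq 2^{1-s}\mu(\zeta_1-\zeta_2)^{s}$. For the summand $|\zeta|^{q-1}\zeta$, recall $q>s$ by (A2) and argue by cases. If $\zeta_1,\zeta_2$ lie in the same closed half-line, the mean value theorem combined with $\zeta_1-\zeta_2\leq 2\max(|\zeta_1|,|\zeta_2|)$ and $\max(|\zeta_1|,|\zeta_2|)^{q-s}\leq|\zeta_1|^{q-s}+|\zeta_2|^{q-s}$ produces the desired bound. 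If $\zeta_1>0>\zeta_2$, the identities $|\zeta_1|^{q-1}\zeta_1-|\zeta_2|^{q-1}\zeta_2=\zeta_1^{q}+|\zeta_2|^{q}$ and $\zeta_1-\zeta_2=\zeta_1+|\zeta_2|$ reduce matters to the elementary inequality $(\zeta_1^{q-s}+|\zeta_2|^{q-s})(\zeta_1+|\zeta_2|)^{s}\geq\zeta_1^{q}+|\zeta_2|^{q}$, which follows by distributing and discarding the nonnegative cross terms. Summing the two contributions gives the dual estimate.

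The strategic choice is to pass through $g(\mu,\cdot)$ rather than work with $\psi(\mu,\cdot)$ directly: this replaces the implicitly defined $\psi$ with the explicit power structure of $g$, trading the desired lower monotonicity bound for an upper H\"older estimate that can be checked summand-by-summand. The only mildly delicate point is the mixed-sign case in the control of $|\zeta|^{q-1}\zeta$ when $q>1$, where the mean value theorem cannot be applied across the origin; this is handled by the explicit algebraic comparison above. The resulting constant $C=C_{q,s}$ is manifestly independent of $\mu\geq 0$, as required by the statement.
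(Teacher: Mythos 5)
Your proposal follows essentially the same route as the paper: pass from $\psi(\mu,\cdot)$ to its explicit inverse $g(\mu,\cdot)$, prove the H\"older-type upper bound $|g(\mu,\zeta_1)-g(\mu,\zeta_2)|\leq C\bigl(\mu+|\zeta_1|^{q-s}+|\zeta_2|^{q-s}\bigr)|\zeta_1-\zeta_2|^{s}$, and then invert it (raise to the power $\frac{1}{s}$, split the sum, multiply by $\theta_1-\theta_2$) to obtain \eqref{monotonicitypsi}. The only mechanical difference is in how the convex summand $|\zeta|^{q-1}\zeta$ is treated: the paper absorbs the factor $\max(|\zeta_1|,|\zeta_2|)^{q-1}|\zeta_1-\zeta_2|^{1-s}$ via Young's inequality, while you use $|\zeta_1-\zeta_2|\leq 2\max(|\zeta_1|,|\zeta_2|)$ and a sign case-split; both work, and in fact your mixed-sign case is not needed when $q\geq 1$, since $\zeta\mapsto|\zeta|^{q-1}\zeta$ is then $C^1$ across the origin with $|\partial_\zeta|\leq q\max(|\zeta_1|,|\zeta_2|)^{q-1}$ on the whole segment.

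There is, however, one genuine gap: the mean value theorem step as you state it is only valid for $q\geq 1$, whereas the standing hypothesis (A2) only requires $s<\min(1,q)$, so $q\in(s,1]$ is allowed. For $q<1$ the derivative $q|\zeta|^{q-1}$ is maximized at the endpoint of \emph{smaller} modulus, and the claimed bound $q\max(|\zeta_1|,|\zeta_2|)^{q-1}|\zeta_1-\zeta_2|$ is simply false (take $\zeta_2=0$, $\zeta_1=1$: the left side is $1$, the right side is $q<1$). The fix is the same device you already used for the concave summand: for $0<q\leq 1$ one has $\bigl||\zeta_1|^{q-1}\zeta_1-|\zeta_2|^{q-1}\zeta_2\bigr|\leq 2^{1-q}|\zeta_1-\zeta_2|^{q}=2^{1-q}|\zeta_1-\zeta_2|^{q-s}|\zeta_1-\zeta_2|^{s}\leq C\bigl(|\zeta_1|^{q-s}+|\zeta_2|^{q-s}\bigr)|\zeta_1-\zeta_2|^{s}$, using $|\zeta_1-\zeta_2|\leq|\zeta_1|+|\zeta_2|$ and the subadditivity of $t\mapsto t^{q-s}$. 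With this one-line patch for $q\in(s,1]$ (the paper treats $q>1>s$ in detail and disposes of $0<s<q\leq 1$ with a brief remark), your argument is complete and the constant remains independent of $\mu$.
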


\vskip 3pt
Next, we present the functional analytic framework to study \eqref{4thorderBVP}-\eqref{eqn:navier_bd_cond}. First, we discuss the finiteness of the integrals in \eqref{def:weaksln4thordereqn}. Let $w,\varphi \in W$ be arbitrary. Then,
$$
\begin{aligned}
\int_{\Omega}\Big|\psi(\mu,\Delta w) \Delta \varphi \Big|dx &\leq \Big(\int_{\Omega}\psi(\mu,|\Delta w|)^{q+1}\Big)^{\frac{1}{q+1}}\Big(\int_{\Omega}|\Delta \varphi|^{\frac{q+1}{q}}\Big)^{\frac{q}{q+1}} &\quad &\hbox{from H\"{o}lder Inequality}\\
& \leq \Big(\int_{\Omega}|\Delta w|^{\frac{q+1}{q}}\Big)^{\frac{1}{q+1}}\|\varphi\|_W   &\quad &\hbox{from \eqref{growthpsi} in Lemma \ref{inequalitiespsi}}\\
& \leq \|w\|^{\frac{1}{q}}_W\|\varphi\|_{W}.
\end{aligned}
$$

On the other hand, assuming {\eqref{cond:crit_hyperbola}} and using the H\"older inequality and the Sobolev embeddings (see \eqref{Sobolevsubcriticalexp_q} and \eqref{SobolevEmbedding_q} with $m=p$),
$$
\begin{aligned}
\int_{\Omega}\Big|f_+(\lambda,w) \varphi\Big| dx &\leq C\int_{\Omega}\Big(1+|w|^{p}\Big)|\varphi|dx\\
&\leq 
C\Big(1+\int_{\Omega}|w|^{p+1}dx\Big)^{\frac{p}{p+1}}\Big(\int_{\Omega}|\varphi|^{p+1}dx\Big)^{\frac{1}{p+1}} &\quad &\hbox{from H\"{o}lder Inequality}\\
& \leq C\Big(1+ \|w\|^{p}_W\Big)\|\varphi\|_{W}. &\quad &\hbox{from \eqref{Sobolevsubcriticalexp_q} and \eqref{SobolevEmbedding_q} with $m=p$}
\end{aligned}
$$

This guarantees that the \label{pg:finite_int} integral on the right side of \eqref{def:weaksln4thordereqn} is finite.

\vskip 3pt
\underline{\it Proof of Theorem \ref{theo:regularity_4thorder}.}
Part {\it i.} follows directly from hypothesis (A2) and the fact that $u,v$ are classical solutions of \eqref{HS1}. We now prove part {\it ii.}. Recall that we are assuming \eqref{cond:crit_hyperbola}, which implies the continuity of the embeddings $\widetilde{W}\hookrightarrow L^{q+1}(\Omega)$ and $W\hookrightarrow L^{p+1}(\Omega)$. Let $v\in W$ be a weak solution of \eqref{4thorderBVP}-\eqref{eqn:navier_bd_cond}. Using (A2) and \eqref{cond:crit_hyperbola}, $v\in L^{p+1}(\Omega)$ and it is direct to verify that $h_v:=\lambda |v|^{r-1}v+|v|^{p-1}v \in L^{\frac{p+1}{p}}(\Omega)$. Theorem 9.15 in \cite{GilbargTrudinger} implies the existence of a unique $\widetilde{u}\in \widetilde{W}$ such that $
-\Delta \widetilde{u}=h_v$ a.e. in $\Omega$.

Proceeding analogously, $\widetilde{u}\in L^{q+1}(\Omega)$ and consequently $h_{\widetilde{u}}:=\mu |\widetilde{u}|^{s-1}\widetilde{u}+|\widetilde{u}|^{q-1}\widetilde{u}\in L^{\frac{q+1}{q}}(\Omega)$. Again, Theorem 9.15 in \cite{GilbargTrudinger} implies the existence of a unique $\widetilde{v}\in W$ such that 
$-\Delta \widetilde{v}=h_{\widetilde{u}}$ a.e in $\Omega$. This last equality can be rewritten as $\widetilde{u}=-\psi(\mu,\Delta \widetilde{{v}})$ a.e. in $\Omega$.\\

We claim that $\Delta v=\Delta \widetilde{v}$ a.e. in $\Omega$. To prove the claim observe that for any given $\varphi \in W$
$$
\begin{aligned}
\int_{\Omega} \psi(\mu,\Delta \widetilde{v})\,\Delta \varphi dx &= \int_{\Omega} \big(-\widetilde{u}\big) \,\Delta \varphi dx\\
&= \int_{\Omega} \big(-\Delta \widetilde{u}\big) \,\varphi dx\\
&= \int_{\Omega} h_{v} \,\varphi dx \\
&= \int_{\Omega} \psi(\mu,\Delta {v})\Delta \varphi dx.
\end{aligned}
$$

Taking $\varphi=v-\widetilde{v}\in W$, we find that 
\begin{equation}\label{eqn:tilde=notilde}
\int_{\Omega} \Big(\psi(\mu,\Delta {v})-\psi(\mu,\Delta \widetilde{v})\Big)\,(\Delta v - \Delta \widetilde{v}) dx=0.
\end{equation}

Since $\psi(\mu,\cdot)$ is strictly increasing, the integrand in \eqref{eqn:tilde=notilde} is non-negative. We conclude that $\psi(\mu,\Delta v)=\psi(\mu, \Delta \widetilde{v})$ a.e. in $\Omega$ and the claim follows.\\

The uniqueness stated in Theorem 9.15 in \cite{GilbargTrudinger} implies that $v=\widetilde{v}$ a.e. in $\Omega$ and thus, $v\in W$ and $u:=-\psi(\mu,\Delta v)=-\psi(\mu,\Delta \widetilde{v})\in \widetilde{W}$ solve the differential equations in \eqref{HS1} a.e. in  $\Omega$. This completes the proof of part {\it ii.}.\\

The proof of part {\it iii.} in Theorem \ref{theo:regularity_4thorder} is a consequence of Theorem \ref{theo:Theorem_1} and part {\it ii.}, noticing that when $w\in W$ is a weak solution of \eqref{4thorderBVP}-\eqref{eqn:navier_bd_cond}, $u:=-\psi(\mu,\Delta w)$ and $v:=w$ a.e. in $\Omega$ satisfy that $u\in \widetilde{W}$, $v\in W$ and then the pair $(u,v)$ is a very weak solution of \eqref{HS1}. 

\QEDB

\vskip 3pt
Regarding the energy $J_{\mu,\lambda}$ defined in \eqref{eq:energy_4thBVP}, we first observe that the functional 
$$
W\ni w \mapsto \int_{\Omega}\Psi(\mu,\Delta w)dx \in [0,\infty)
$$
is well defined (see Lemma \ref{inequalitiespsi}) and, even more, it is weakly lower semicontinuous due to the continuity and convexity of the function $\Psi(\mu,\cdot):\R\to [0,\infty)$. 

\vskip 3pt
Besides, the fact that $\Psi(\mu,\cdot)\in C^1(\R)$ (see Lemmas \ref{asymptoticpsizeroinfinity} and \ref{asymptoticsPSI}) and using arguments in the line of those in Remark 3.2.26 in page 127 in \cite{DrabekMilota} (see also Lemma \ref{inequalitiespsi} above), yield that the {functional} 
$$
W\ni v\mapsto \int_{\Omega}\Psi(\mu,\Delta v)dx 
$$
belongs to $C^1(W)$. 

\vskip 3pt

Even more, hypotheses {\rm (A2)} and \eqref{cond:crit_hyperbola} yield that $J_{\mu,\lambda}$ is well defined. Moreover, $J_{\mu,\lambda}\in C^1(W)$ and \eqref{eq:energy_4thBVP_der} holds true.

\section{Proof of Theorem 1.3.}\label{sect:local_minima}

In this section we present the detailed proof of Theorem \ref{theo:local_min}. The proof is based on an adaptation of the proof of Lemma 2.2 in \cite{IturrUbillBrock2008}. Assume {\rm (A1)} and {\rm (A2)}. 

\vskip 3pt
Arguing by contradiction, there are sequences $\{w_n\}_n\subset W$ and $\{\epsilon_n\}_n\subset (0,\infty)$ satisfying that
\begin{equation}\label{hyp:contr_min}
\|w_0-w_n\|_W\leq \epsilon_n, \qquad \epsilon_n \to^+ 0\quad \hbox{and} \quad J_{\mu,\lambda}(w_n)< J_{\mu,\lambda}(w_0). 
\end{equation}

The first observation we make is that the sequence $\{w_n\}_n$ is bounded in $W$. 

\vskip 3pt

Let $\sigma, \rho$ be fixed and such that 
\begin{equation*}\label{eqn:sigma_rho}
0<\sigma<\min\Big\{{1},\frac{1}{q},r\Big\} \quad \hbox{and} \quad p\leq \rho<\frac{N\frac{q+1}{q}}{N-2\frac{q+1}{q}}{-1}.
\end{equation*}

and consider the function
$$
G:W\to \R, \qquad G(w):= \int_{\Omega}\big|w -w_0\big|^{\sigma+1}dx+\int_{\Omega}\big|w -w_0\big|^{\rho+1}dx.
$$

Observe that $G\in C^1(W)$ with 
\begin{multline*}
DG(w)\varphi:=(\sigma+1)\int_{\Omega}|w-w_0|^{\sigma-1}(w-w_0)\varphi dx\\
+ (\rho+1)\int_{\Omega}|w-w_0|^{\rho-1}(w-w_0)\varphi dx
\end{multline*}
for $w\,\varphi\in W$.

\vskip 3pt
For $n\in \mathbb{N}$, set $r_n:=S_{q,\sigma}\epsilon_n^{\sigma+1}+S_{q,\rho}\epsilon_n^{\rho+1}$
and define
$$
M_{n}:=\Big\{w\in W\,:\,G(w)\leq  r_n\Big\}.
$$

From \eqref{Sobolevsubcriticalexp_q} and \eqref{SobolevEmbedding_q} with $m=\sigma$ and $m=\rho$, $w_0,w_n\in M_n$. Consider the minimization problem
\begin{equation}\label{min_prblm}
\min\Big\{J_{\mu,\lambda}(w)\,:\,w\in M_n\Big\}.
\end{equation}

\vskip 3pt
Observe that $J_{\mu,\lambda}$ is coercive in $M_n$. Also, the compactness of the embedding $W\hookrightarrow L^{m+1}(\Omega)$ for $m=\sigma$ and $m=\rho$, yields that $M_n$ is sequentially weakly closed in $W$. 

\vskip 3pt
Since $J_{\mu,\lambda}\in C^1(W)$, we readily check {(from {\rm (A1)})} the sequential weak lower semicontinuity of $J_{\mu,\lambda}$. Thus, the problem \eqref{min_prblm} has a solution.

\medskip
Our second observation is that, with no loss of generality, we may assume that $w_n\in W$ solves \eqref{min_prblm}. 

\vskip 3pt
Setting 
$$
u_n:=-\psi(\mu,
\Delta w_n) \quad \hbox{and} \quad v_n:=w_n \quad \hbox{for}\quad n\in \mathbb{N}\cup\{0\},
$$ 
$\{u_n\}_n\subset L^{q+1}(\Omega)$ and $\{v_n\}_n\subset{W}$.

Observe also that
$$
M_n=\big\{G<r_n\big\}\cup \big\{G=r_n\big\}.
$$

Let $n\in \mathbb{N}$ be fixed, but arbitrary. 

\vskip 4pt

\underline{\it Case 1.} If $w_n\in \big\{w\in W\,:\,G(w)<  r_n\big\}$, then $w_n\in W$ is a critical point of $J_{\mu,\lambda}$. The boundedness of $\{w_n\}_n$ in $W$ and Theorems \ref{theo:Theorem_1} and \ref{theo:regularity_4thorder} yield that $u_n,v_n\in C^{2,\beta}(\overline{\Omega})$ for some $\beta\in (0,1)$ independent of $n$. Also, from Theorem \ref{theo:Theorem_1}, there exists a constant $C>0$, independent of $n$, such that
\begin{equation}\label{est:uniformest}
\|u_n\|_{C^{2,\beta}(\overline{\Omega})}+\|v_n\|_{C^{2,\beta}(\overline{\Omega})}\leq \mathcal{C}.
\end{equation}

\vskip 3pt
\underline{\it Case 2.} Now, assume that $w_n\in\big\{w\in W\,:\,G(w)=  r_n\big\}$. Since $r_n>0$ and $G\in C^1(W)$, the {\it Lagrange Multipliers Theorem} (see Theorem 6.3.2. in \cite{DrabekMilota}) yields that for some  $\gamma_n \in \R$,
$$
DJ_{\mu,\lambda}(w_n)=\gamma_n DG(w_n) \quad \hbox{in} \quad W,
$$
i.e., for any $\varphi\in W$,
\begin{multline}\label{eqn:4thord_Lagr_Mul}
\int_{\Omega}\psi(\mu, \Delta w_n)\Delta \varphi dx -\int_{\Omega}f_+(\lambda,w_n)\varphi dx =\\ =\gamma_n\Big[(\sigma+1)\int_{\Omega}\big|w_n-w_0|^{\sigma-1}(w_n-w_0)\varphi dx \\
+(\rho+1)\int_{\Omega}\big|w_n-w_0|^{\rho-1}(w_n-w_0)\varphi dx \Big].
\end{multline}

\vskip 3pt
The pair $(u_n,v_n)$ is a very weak solution of the system \eqref{S1} with
$$
f(x,u,v)=f_+(\lambda,v)+ \gamma_n\Big[(\sigma+1)\big|v-w_0|^{\sigma-1}(v-w_0)+(\rho+1)\big|v-w_0|^{\rho-1}(v-w_0)\Big] 
$$
and 
$$
g(x,u,v)=\mu|u|^{s-1}u + |u|^{q-1}u.
$$

Observe that 
\begin{multline*}\label{est:f_theo1.3}
|f(x,u,v)|\leq \max\{1,|\lambda|\}\big(|v|^r+|v|^p\big)+\\
+ C_{\sigma,\rho}|\gamma_n|\big(|w_0|^\sigma + |w_0|^\rho+ |v|^\sigma + |v|^\rho\big)
\end{multline*}

Hence, hypotheses {\rm (A1)} and {\rm (B)} are guaranteed with $\rho$ in place of $p$. Thus, arguing as in {\it Case 1}, applying Theorem \ref{theo:Theorem_1}, we obtain \eqref{est:uniformest} that {$w_n\in C^2(\overline{\Omega})$.}

\vskip 3pt

\underline{\it Claim.} There exists $c>0$, independent of $n$, such that $|\gamma_n |\leq c$. 

\vskip 3pt
Assume for the moment the claim has been proven. 
Observe that hypothesis {\rm (B)} would be guaranteed with $\rho$ in place of $p$ in \eqref{ineq:growth_f} and with a constant $C>0$,  independent of $n$, in \eqref{ineq:growth_f} and \eqref{ineq:growth_g}.

\vskip 3pt
Thus, arguing as in {\it Case 1}, applying Theorem \ref{theo:Theorem_1}, we obtain \eqref{est:uniformest} with $\beta\in (0,1)$ and $\mathcal{C}>0$ independent of $n$.

\vskip 3pt
In either case, we may apply the {\it Arzela-Ascoli Theorem}, and pass to subsequence if necessary, so that
$$
u_n \to u_0:=-\psi(\mu,\Delta w_0) \quad \hbox{and} \quad v_n \to v_0:=w_0
$$
uniformly in $C^2(\overline{\Omega})$. Thus, $w_n\to w_0$ in $C^2_0(\overline{\Omega})$ and using \eqref{hyp:contr_min}, for $n\in \mathbb{N}$ sufficiently large, 
\begin{equation*}\label{cond:contrd_minimum}
J_{\mu,\lambda}(w_n) < J_{\mu,\lambda}(w_0) \leq J_{\mu,\lambda}(w_n),
\end{equation*}
which is a contradiction.

\vskip 3pt
The rest of the proof is devoted to establish the {\it Claim}. Since $w_n,w_0\in C^2_0(\overline{\Omega})$ and $w_0$ is a local minimum of $J_{\mu,\lambda}$ in $C^2_0(\overline{\Omega})$, 
\begin{equation}\label{eqn:4thord_locl_mintheo1.3}
\int_{\Omega}\psi(\mu,\Delta w_0)\Delta (w_n-w_0)dx -\int_{\Omega}f_+(\lambda,w_0)(w_n-w_0)dx=0
\end{equation}
for any $n\in \mathbb{N}$.

Testing \eqref{eqn:4thord_Lagr_Mul} against $\varphi=w_n-w_0$, using \eqref{eqn:4thord_locl_mintheo1.3} and subtracting, 
\begin{multline}\label{eqn:finalI_theo1.3}
\int_{\Omega}\Big[\psi(\mu,\Delta w_n)-\psi(\mu,\Delta w_0)\Big]\Delta(w_n-w_0)dx - \int_{\Omega}\Big[f_+(\lambda,w_n)-f_+(\lambda,w_0)\Big](w_n-w_0)dx\\
=\int_{\Omega}\gamma_n\Big[(\sigma+1)|w_n-w_0|^{\sigma+1}+(\rho+1)|w_n-w_0|^{\rho+1}\Big]dx.
\end{multline}

Using \eqref{eqn:finalI_theo1.3}, we estimate 
\begin{equation}\label{eqn:final_III}
\begin{small}
\begin{aligned}
\int_{\Omega}\Big[\psi(\mu,\Delta w_n)-\psi(\mu,\Delta w_0)\Big]&\Delta(w_n-w_0)dx 
+ \int_{\Omega}\Big[f_+(\lambda,w_n)-f_+(\lambda,w_0)\Big](w_n-w_0)dx\\
\geq & \Big| \int_{\Omega}\Big[\psi(\mu,\Delta w_n)-\psi(\mu,\Delta w_0)\Big]\Delta(w_n-w_0)dx \\
&\hspace{1cm}- \int_{\Omega}\Big[f_+(\lambda,w_n)-f_+(\lambda,w_0)\Big](w_n-w_0)dx\Big|\\
=&|\gamma_n|\Big[(\sigma+1)\|w_n-w_0\|_{L^{\sigma+1}(\Omega)}^{\sigma+1}+(\rho+1)\|w_n-w_0\|_{L^{\rho+1}(\Omega)}^{\rho+1}\Big]\\
\geq & (\sigma+1)|\gamma_n|G(w_n)\\
=&(\sigma +1)|\gamma_n|r_n\\
=&  (\sigma+1)|\gamma_n|\big(S_{q,\sigma}\epsilon_n^{\sigma+1}+S_{q,\rho}\epsilon_n^{\rho+1}\big)\\
\geq& (\sigma +1)|\gamma_n|S_{q,\sigma}\epsilon^{\sigma+1}.
\end{aligned}
\end{small}
\end{equation}

Next we estimate the integral terms in the first line of \eqref{eqn:final_III}.

\medskip
If $0<r<p\leq 1$, $f_+(\lambda,\cdot)\in C^{0,r}(\R)$ and then for any $\zeta,\tilde{\zeta}\in \R$ such that $|\zeta|,|\tilde{\zeta}|\leq M$,
$$
\begin{aligned}
\big|f_+(\lambda,\zeta)-f_+(\lambda,\tilde{\zeta})\big|&\leq \big|\lambda (\zeta_+^r-\tilde{\zeta}_+^r)\big|+|\zeta_+^p-\tilde{\zeta}_+^p|\\
& \leq \max\{1,|\lambda|\}\Big[|\zeta - \tilde{\zeta}|^r+ |\zeta-\tilde{\zeta}|^p\Big].
\end{aligned}
$$

Using the Sobolev embedding and \eqref{hyp:contr_min},
\begin{equation}\label{est:finalII.1_theo1.3}
\begin{aligned}
\Big|\int_{\Omega}\big(f_+(\lambda,w_n)-f_+(\lambda,w_0)\big)(w_n-w_0)dx\Big|&\leq  {C}_{\lambda,r,p}\Big[\|w_n-w_0\|^{r+1}_{L^{r+1}(\Omega)}+\\
&\hspace{1.5cm}+\|w_n-w_0\|^{p+1}_{L^{p+1}(\Omega)}\Big]\\
&\leq \tilde{C}_{\lambda,r,p}\Big[\|w_n-w_0\|^{r+1}_{W}+\\
&\hspace{1.5cm}+\|w_n-w_0\|^{p+1}_{W}\Big]\\
&\leq \tilde{C}_{\lambda,r,p}(\epsilon_n^{r+1}+\epsilon_n^{p+1})\\
&\leq \mathcal{K}\epsilon_n^{r+1}.
\end{aligned}
\end{equation}

\vskip 3pt
Next, observe that when {$p> 1 >r$}, for any $\zeta,\tilde{\zeta}$ such that $|\zeta|,|\tilde{\zeta}|\leq M$,
$$
\begin{aligned}
\big|f_+(\lambda,\zeta)-f_+(\lambda,\tilde{\zeta})\big|&=\big|\lambda (\zeta_+^r-\tilde{\zeta}_+^r)\big|+|\zeta_+^p-\tilde{\zeta}_+^p|\\
& \leq \max\{1,|\lambda|\}\Big[|\zeta - \tilde{\zeta}|^r+ |\zeta_+^p-\tilde{\zeta}_+^p|\Big]\\
&\leq C_{\lambda,r,p} \Big[|\zeta - \tilde{\zeta}|^r+{(|\zeta|^{p-1}+|\tilde{\zeta}|^{p-1})}|\zeta-\tilde{\zeta}|\Big]
\end{aligned}
$$

Using the H\"older's inequality, the Sobolev embedding and \eqref{hyp:contr_min},
\begin{equation}\label{est:finalII.2_theo1.3}
\begin{aligned}
\Big|\int_{\Omega}\big(f_+(\lambda,w_n)-f_+(\lambda,w_0)\big)(w_n-w_0)dx\Big|&\leq  \tilde{C}_{\lambda,r,p}\Big[\|w_n-w_0\|^{r+1}_{L^{r+1}(\Omega)}+\\
&\hspace{.5cm}+\big(\|w_n\|^{p-1}_{L^{p+1}(\Omega)}+\|w_0\|^{p-1}_{L^{p+1}(\Omega)}\big)\|w_n-w_0\|^2_{L^{p+1}(\Omega)}\Big]\\
&\leq \mathcal{K}\|w_n-w_0\|^{r+1}_{W}\\
&\leq \mathcal{K}\epsilon_n^{r+1}.
\end{aligned}
\end{equation}

On the other hand, if $q\geq 1$, $\psi(\mu,\cdot)\in C^{0,\frac{1}{q}}(\R)$. Using that $\psi$ is strictly increasing, there exists $C=C(\mu)>0$ such that 
\begin{equation}\label{est:finalI_theo1.3}
\begin{aligned}
C \epsilon_n^{\frac{q+1}{q}}&\geq C\|w_n-w_0\|_W^{\frac{q+1}{q}}\\
&=\int_{\Omega}C|\Delta w_n-\Delta w_0|^{\frac{q+1}{q}}dx \\
&\geq \int_{\Omega}|\psi(\mu,\Delta w_n)-\psi(\mu,\Delta w_0)||\Delta w_n-\Delta w_0|dx\\
& = \int_{\Omega}\Big[\psi(\mu,\Delta w_n)-\psi(\mu,\Delta w_0)\Big]\Delta\big( w_n-w_0\big)dx.
\end{aligned}
\end{equation}

Thus, from \eqref{eqn:final_III},
\eqref{est:finalII.1_theo1.3},\eqref{est:finalII.2_theo1.3} and \eqref{est:finalI_theo1.3},
$$
\begin{aligned}
C\epsilon_n^{\frac{q+1}{q}} + \mathcal{K}\epsilon_n^{r+1}&\geq   (\sigma+1)|\gamma_n|S_{q,\sigma}\epsilon_n^{\sigma+1}
\end{aligned}
$$

Therefore,
$$
|\gamma_n|\leq \hat{C}\big(\epsilon_n^{\frac{1}{q}-\sigma}+ \epsilon_n^{{r}-\sigma}\big) \to 0^+ \quad \hbox{as}\quad n\to \infty.
$$

In the case, $q\in (0,1)$, we use the mean value theorem on $\psi(\mu,\cdot)$, the fact that $|\partial_{\theta}\psi(\mu,\theta)|\leq \frac{|\psi(\mu,\theta)|^{1-s}}{\mu(1-s)+(1-q)|\psi(\mu,\theta)|^{q-s}}\leq C|\theta|^{\frac{1-q}{q}}$ and H\"older's inequality to find that 
\begin{equation}\label{est:finalIV_theo1.3}
C\epsilon_n^{2}\geq  C\|w_n-w_0\|_W^{2}\geq \int_{\Omega}|\psi(\mu,\Delta w_n)-\psi(\mu,\Delta w_0)||\Delta w_n-\Delta w_0|dx
\end{equation}
so that \eqref{eqn:final_III},
\eqref{est:finalII.1_theo1.3}, \eqref{est:finalII.2_theo1.3} and \eqref{est:finalIV_theo1.3} yield
$$
|\gamma_n|\leq \hat{C}\big(\epsilon_n^{1-\sigma}+ \epsilon_n^{{r}-\sigma}\big) \to 0^+ \quad \hbox{as}\quad n\to \infty.
$$

In either case, $|\gamma_n|\to 0$ as $n\to \infty$. This proves the claim and completes the proof of the theorem. 
%
%
%
%
%
%
\QEDB

\section{Palais-Smale condition}\label{cond:PS}

Assume hypotheses {\rm (A1)}, {\rm (A2)} and {\rm (A3)} and fix $\lambda,\mu \geq 0$. In this section we study compactness properties of $(PS)-$sequences for the energy $J_{\mu,\lambda}:W \to \R$ defined in \eqref{eq:energy_4thBVP}. Recall also that $J_{\mu,\lambda}\in C^1(W)$ and its derivative $DJ_{\mu,\lambda}$ is given in expression \eqref{eq:energy_4thBVP_der}.

\vskip 3pt
Let $\{w_n\}_{n\in \mathbb{N}}\subset W$ be a Palais-Smale {sequence} ({$(PS)-$sequence} for short) for $J_{\mu,\lambda}$, i.e., $\{J_{\mu,\lambda}(w_n)\}_{n\in\mathbb{N}}$ is bounded and 
$$
D J_{\mu,\lambda}(w_n) \to 0 \quad \hbox{in} \quad W^*, \quad \hbox{as} \quad n\to \infty.  
$$

We prove that $\{w_n\}_{n\in \mathbb{N}}$ has a subsequence that converges strongly in $W$. The proof proceeds by a series of Lemmas.

\vskip 3pt
The first lemma concerns with the boundedness of the $(PS)-$sequence $\{w_n\}_n$ in $W$.

\begin{lemma}\label{PSbounded}
The $(PS)-$ sequence $\{w_n\}_n$ of $J_{\mu,\lambda}$ is bounded in $W$.
\end{lemma}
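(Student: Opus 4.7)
The plan is to adapt the Ambrosetti--Rabinowitz trick to the fourth-order energy $J_{\mu,\lambda}$. Denote by $\varepsilon_n\to 0$ a sequence with $|DJ_{\mu,\lambda}(w_n)\phi|\le\varepsilon_n\|\phi\|_W$ for all $\phi\in W$. For a parameter $\theta>0$ to be chosen, consider
\begin{multline*}
J_{\mu,\lambda}(w_n) - \theta\, DJ_{\mu,\lambda}(w_n)w_n = \int_\Omega \bigl[\Psi(\mu,\Delta w_n) - \theta\,\psi(\mu,\Delta w_n)\Delta w_n\bigr]dx\\
+ \lambda\bigl(\theta-\tfrac{1}{r+1}\bigr)\!\int_\Omega(w_n)_+^{r+1}dx + \bigl(\theta-\tfrac{1}{p+1}\bigr)\!\int_\Omega(w_n)_+^{p+1}dx,
\end{multline*}
whose absolute value is at most $M+\theta\varepsilon_n\|w_n\|_W$ by the Palais--Smale hypothesis. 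The natural choice is $\theta\in(\tfrac{1}{p+1},\tfrac{q}{q+1})$, a non-empty interval in the regime $qp>1$ that governs the main theorems of the paper; the complementary case $qp\le 1$ is simpler, handled by $\theta=0$, since then the $(q+1)/q$-power asymptotic of $\Psi$ already dominates the $p+1$-power of $F_+$.

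With such a $\theta$ the $(w_n)_+^{p+1}$ coefficient is non-negative and that term is discarded. For the leading integral the sharp asymptotic $\Psi(\mu,\eta)\ge(\tfrac{q}{q+1}-\delta)|\eta|^{(q+1)/q}-C_\delta$ (from Lemma \ref{asymptoticsPSI}(v)), combined with the upper bound $\psi(\mu,\eta)\eta\le|\eta|^{(q+1)/q}$ (from Lemma \ref{inequalitiespsi}, with equality when $\mu=0$), yields
$$\int_\Omega\bigl[\Psi(\mu,\Delta w_n)-\theta\,\psi(\mu,\Delta w_n)\Delta w_n\bigr]dx \ge c_0\|w_n\|_W^{(q+1)/q} - C_1,$$
with $c_0=\tfrac{q}{q+1}-\theta-\delta>0$ for $\delta$ sufficiently small. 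The remaining $(w_n)_+^{r+1}$ term is controlled by the Sobolev embedding from (A1): $\int_\Omega(w_n)_+^{r+1}dx\le C_2\|w_n\|_W^{r+1}$.

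Assembling these bounds gives $c_0\|w_n\|_W^{(q+1)/q}\le C_3+\theta\varepsilon_n\|w_n\|_W+C_4\|w_n\|_W^{r+1}$. Hypothesis (A3), $rq<1$, now enters decisively in the equivalent form $\tfrac{q+1}{q}>r+1$; together with $\tfrac{q+1}{q}>1$ this makes the exponent on the left strictly larger than every exponent on the right, forcing $\{\|w_n\|_W\}$ to remain bounded. The main obstacle is producing the coercive lower bound for the $\Psi-\theta\psi\Delta w_n$ integral: the pointwise inequality $\Psi(\mu,\eta)\ge\tfrac{s}{s+1}\psi(\mu,\eta)\eta$ from Lemma \ref{inequalitiespsi} is too weak, because $\tfrac{s}{s+1}-\tfrac{1}{p+1}$ need not be positive (e.g.\ when $sp<1$); one must instead exploit the sharper asymptotic constant $\tfrac{q}{q+1}$, whose margin over $\tfrac{1}{p+1}$ is precisely what $qp>1$ provides.
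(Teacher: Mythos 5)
Your argument is correct in the regime that actually matters ($qp>1$), and it is a genuinely different route from the paper's. The paper fixes the Ambrosetti--Rabinowitz parameter at the endpoint $\theta=\tfrac{q}{q+1}$, where the leading integrand $\Psi(\mu,\Delta w_n)-\tfrac{q}{q+1}\psi(\mu,\Delta w_n)\Delta w_n$ is non-positive (by \eqref{ComparisonInequality}) and hence gives no coercivity; it compensates by a two-step bootstrap: the identity \eqref{PS1.3} is solved for $\int_\Omega (w_n)_+^{p+1}dx$ up to an error of size $\delta\|w_n\|_W^{\frac{q+1}{q}}$, this is fed back into the estimate for $DJ_{\mu,\lambda}(w_n)w_n$ to bound $\int_\Omega\psi(\mu,\Delta w_n)\Delta w_n\,dx$, and finally the sharp lower bound \eqref{growthpsi} (with the splitting at $\theta_\mu$ and the constant $\hat{C}_{q,s}$) plus the choice $\delta<\hat{C}_{q,s}/(4C_1)$ gives $\|w_n\|_W^{\frac{q+1}{q}}\leq C+c(\|w_n\|_W+\|w_n\|_W^{r+1})$, after which (A3) closes the argument exactly as you do. Your interior choice $\theta\in(\tfrac{1}{p+1},\tfrac{q}{q+1})$ lets you discard the $(w_n)_+^{p+1}$ term outright and extract coercivity of $\int_\Omega[\Psi-\theta\psi\Delta w_n]dx$ directly from Lemma \ref{asymptoticsPSI}(v) and the upper bound $\psi(\mu,\eta)\eta\leq|\eta|^{\frac{q+1}{q}}$, so you never need the lower bound in \eqref{growthpsi} nor the bootstrap; this is shorter and more transparent. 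What the paper's endpoint choice buys is that the $p+1$ coefficient becomes exactly $\tfrac{qp-1}{(p+1)(q+1)}$, which lets it (at least nominally) also cover $qp<1$ and the borderline $qp=1$ within one computation; both arguments use only $DJ_{\mu,\lambda}(w_n)w_n$, so both also yield the paper's Remark 5.1.

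One caveat: Lemma \ref{PSbounded} is stated in Section 5 under (A1)--(A3) only, without $qp>1$, and your dismissal of $qp\leq 1$ via $\theta=0$ is complete only for $qp<1$ (where $p+1<\tfrac{q+1}{q}$ and all right-hand exponents are subcritical relative to $\tfrac{q+1}{q}$). At $qp=1$ one has $p+1=\tfrac{q+1}{q}$, so with $\theta=0$ the term $\tfrac{1}{p+1}\|w_n\|_{L^{p+1}}^{p+1}$ competes at the same power as the coercive term and boundedness does not follow unless the relevant Sobolev constant happens to be small; note that an interior $\theta$ is unavailable there since the interval $(\tfrac{1}{p+1},\tfrac{q}{q+1})$ collapses. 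This borderline case is treated by the paper only with a one-line remark as well, and it is irrelevant for the applications in Sections 6--7 (which assume $qp>1$), but if you want your proof to cover the lemma as stated you should either add an argument for $qp=1$ or state the restriction explicitly.
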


\begin{proof}
First observe that
\begin{multline}\label{PS1}
J_{\mu,\lambda}(w_n) - \frac{q}{q+1} DJ_{\mu,\lambda}(w_n)w_n=\\
=\underbrace{\int_{\Omega}\left(\Psi(\mu,\Delta w_n)- \frac{q}{q+1} \psi(\mu, \Delta w_n)\Delta w_n \right)dx}_{I_n}\\ 
\underbrace{-\lambda\left(\frac{1}{r+1}-\frac{q}{q+1}\right)\int_{\Omega}(w_n)_+^{r+1}dx - \left(\frac{1}{p+1}-\frac{q}{q+1}\right)\int_{\Omega}(w_n)_+^{p+1}dx}_{II_n}.
\end{multline}

Let $\delta\in (0,1)$ be small, but fixed. Since $\{w_n\}_{n\in\mathbb{N}}$ is a $(PS)-$sequence, there exists $N\in \mathbb{N}$ such that for any $n\geq N$,
\begin{equation}\label{PSderiva}
\begin{aligned}
\left|\int_{\Omega} \psi(\mu, \Delta w_n)\Delta w_n - \lambda\int_{\Omega}(w_n)_+^{r+1}dx - \int_{\Omega}(w_n)_+^{p+1}dx\right|=& \left|DJ_{\lambda,\mu}(w_n)w_n\right| \\
\leq & \delta \|w_n\|_W.
\end{aligned}
\end{equation}

Consequently, for $n\geq N$,
\begin{equation}\label{PS1.2}
\big|I_n+II_n\big| \leq C +  \frac{q\delta}{q+1} \|w_n\|_{W}.
\end{equation}

\medskip

Next, we estimate the integral $I_n$ in \eqref{PS1}. With $\delta>0$ as above and using part $(iv)$ in Lemma \ref{asymptoticpsizeroinfinity} and part $(v)$ in Lemma \ref{asymptoticsPSI}, there exists ${D}>\theta_{\mu}>0$ such that for every $\theta$ with $|\theta|\geq D$,
\begin{equation}\label{PS1.1.1}
\begin{aligned}
\left(1-\delta\right)|\theta|^{\frac{q+1}{q}}\leq&   \psi(\mu,\theta)\theta\leq \left(1+\delta\right)|\theta|^{\frac{q+1}{q}}\\
\frac{q}{q+1}\left(1-\delta\right)|\theta|^{\frac{q+1}{q}}\leq &  \Psi(\mu,\theta)\leq \frac{q}{q+1}\left(1+\delta\right)|\theta|^{\frac{q+1}{q}}.
\end{aligned}
\end{equation}

Using \eqref{PS1.1.1}, we estimate
\begin{equation}\label{PS1.1}
\begin{aligned}
I_n=& 
\int_{\{|\Delta w_n|\geq D\}}\left(\Psi(\mu,\Delta w_n)- \frac{q}{q+1} \psi(\mu, \Delta w_n)\Delta w_n \right)dx \\
&+ \int_{\{|\Delta w_n| <D\}}\left(\Psi(\mu,\Delta w_n)- \frac{q}{q+1} \psi(\mu, \Delta w_n)\Delta w_n \right)dx\\
\geq &\frac{-2q\delta}{q+1}\int_{\{|\Delta w_n|\geq {D}\}}|\Delta w_n|^{\frac{q+1}{q}}dx \\
&+ \int_{\{|\Delta w_n| <{D}\}}\left(\Psi(\mu,\Delta w_n)- \frac{q}{q+1} \psi(\mu, \Delta w_n)\Delta w_n \right)dx\\
\geq &\frac{-2q\delta}{q+1}\int_{\Omega}|\Delta w_n|^{\frac{q+1}{q}}dx - \mathcal{K}
_{\mu,q,s,\delta, \Omega}
\end{aligned}
\end{equation}
for some constant $
\mathcal{K}_{\mu,q,s,\delta,\Omega}>0$. Proceeding similarly as above, and taking $\mathcal{K}
_{\mu,q,s,\delta, \Omega}>0$ larger if necessary,
\begin{equation}\label{PS1.1.1}
I_n\leq \frac{2q\delta}{q+1}\int_{\Omega}|\Delta w_n|^{\frac{q+1}{q}}dx +\mathcal{K}
_{\mu,q,s,\delta, \Omega}.
\end{equation}

\medskip
As for the integral $II_n$ we find that 
\begin{equation}\label{PS1.3}
II_n+\lambda \frac{1-rq}{(r+1)(q+1)}\int_{\Omega}(w_n)_+^{r+1}dx  =  \frac{pq-1}{(p+1)(q+1)}\int_{\Omega}(w_n)_+^{p+1}dx.
\end{equation}

Assume for the moment $qp\neq 1$. We estimate the right-hand side in \eqref{PS1.3}, making use of \eqref{PS1.2}, \eqref{PS1.1} and \eqref{PS1.1.1} to obtain that for any $n\geq N$,
\begin{equation}\label{PS2}
\begin{aligned}
\int_{\Omega}(w_n)_+^{p+1}dx 
\leq &  C + \delta \frac{q}{q+1}\|w_n\|_W + \lambda\frac{(p+1)|1-rq|}{(r+1)(qp-1)}\|w_n\|^{r+1}_{L^{r+1}(\Omega)} \\
& \hspace*{4cm} + \frac{2\delta q(p+1)}{|qp-1|}\|w_n\|^{\frac{q+1}{q}}_{W},
\end{aligned}
\end{equation}
where the constant $C=C(\mu,q,s,\delta,\Omega)>0$ does not depend on $n$.

\medskip
Next, from \eqref{PSderiva}, \eqref{PS2} and the Sobolev embeddings in \eqref{Sobolevsubcriticalexp_q} and \eqref{SobolevEmbedding_q}, we can find a constant $C_1>0$ depending only on $q,p,r$ and $\Omega$ such that for any $n\geq N$,
\begin{equation}\label{EstimateLapPS2}
\int_{\Omega}\psi(\mu, \Delta w_n)\Delta w_n dx \leq C + 2\delta\|w_n\|_W + \lambda C_1\|w_n\|_W^{r+1} + \delta C_1\|w_n\|_W^{\frac{q+1}{q}}. 
\end{equation}

Assume for the moment $\mu>0$. From \eqref{theta*} and \eqref{growthpsi},
\begin{equation}\label{BoundedPSseq2}
\begin{aligned}
\int_{\Omega}\psi(\mu,\Delta w_n)\Delta w_n dx = & \int_{\{|\Delta w_n|\geq \theta_{\mu}\}} \psi(\mu,\Delta w_n)\Delta w_n dx + \int_{\{|\Delta w_n|< \theta_{\mu}\}} \psi(\mu,\Delta w_n)\Delta w_n dx 
\\
\geq & \hat{C}_{q,s} \int_{\{|\Delta w_n|\geq \theta_{\mu}\}} |\Delta w_n|^{\frac{q+1}{q}}dx +\\
& \hspace{75pt} + \underbrace{\mu^{-\frac{1}{s}}c_{q,s}\int_{\{|\Delta w_n|< \theta_{\mu}\}}|\Delta w_n|^{\frac{s+1}{s}}dx}_{\geq 0}\\ 
\geq & \hat{C}_{q,s} \int_{\{|\Delta w_n| \geq  \theta_{\mu}\}} |\Delta w_n|^{\frac{q+1}{q}}dx + \hat{C}_{q,s}\int_{\{|\Delta w_n|< \theta_{\mu}\}} |\Delta w_n|^{\frac{q+1}{q}}dx\\
& \hskip 137pt -\hat{C}_{q,s}\int_{\{|\Delta w_n|< \theta_{\mu}\}} |\Delta w_n|^{\frac{q+1}{q}}dx\\
\geq & \hat{C}_{q,s} \int_{\Omega} |\Delta w_n|^{\frac{q+1}{q}}dx - \hat{C}_{q,s}\int_{\{|\Delta w_n|< \theta_{\mu}\}} |\Delta w_n|^{\frac{q+1}{q}}dx\\
\geq & \hat{C}_{q,s} \|w_n\|^{\frac{q+1}{q}}_W - \hat{C}_{q,s}{C}_{q,s}\mu^{\frac{q+1}{q-s}}{\rm meas}(\Omega).
\end{aligned}
\end{equation}

In the case $\mu=0$, Remark \ref{rem:I} yields the same estimate as in \eqref{BoundedPSseq2} with $\hat{C}_{q,s}=0$.

Since, $\delta\in (0,1)$  is small, we may assume that $\delta<\frac{\hat{C}_{q,s}}{4{C_1}}$, which together with \eqref{EstimateLapPS2} and \eqref{BoundedPSseq2}, yields
\begin{equation}\label{ineq:case_qpneq1}
\|w_n\|_W^{\frac{q+1}{q}}\leq C + c\left(\|w_n\|_W + \|w_n\|_W^{r+1}\right)
\end{equation}
for any $n\geq N$, where the constants $C,c>0$, depend only on $\lambda, \mu, p, q, r,s$ and $\Omega$. 

\vskip 3pt
In the case $qp=1$, taking into account \eqref{PS1.3}, one can get again the estimate \eqref{ineq:case_qpneq1}.

\vskip 3pt
Finally, hypothesis ${\rm (A3)}$ yields that $\{w_n\}_{n\in\mathbb{N}}$ is bounded in $W$. This completes the proof of the lemma.
\end{proof}
{\begin{remark}
From the previous proof we observe that if a bounded sequence $\{\lambda_n\}_{n\in \mathbb{N}}\subset [0,\infty)$ and a sequence $\{w_n\}_{n\in\mathbb{N}}\subset W$ are such that $\{J_{\mu,\lambda_n}(w_n)\}_n$ is uniformly bounded and $DJ_{\mu,\lambda_n}(w_n)w_n/\|w_n\|_W \to 0$, then $\{w_n\}_n$ is uniformly bounded in $W$.
\end{remark}}

At this point, using the reflexivity of $W$ and passing to a subsequence denoted the same, assume that $w_n\rightharpoonup w$ weakly in $W$. 

\begin{lemma}\label{lemma:CC_3}
There exists a constant $C_{s,q,\mu}>0$ such that for any $n\in \mathbb{N}$, 
\begin{equation}\label{StrongconvergW2}
\int_{\Omega}\left(\mu^{\frac{1}{s}} + |\psi(\mu, \Delta w_n)|^{\frac{q-s}{s}}+ |\psi(\mu, \Delta w)|^{\frac{q-s}{s}}\right)^{\frac{s(q+1)}{q-s}}dx \leq C_{s,q,\mu}.
\end{equation}
\end{lemma}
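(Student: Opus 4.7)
The strategy is to pull the three summands apart using the elementary inequality $(a+b+c)^{r}\le C_{r}(a^{r}+b^{r}+c^{r})$ (valid for any $r>0$, with $C_{r}=\max\{1,3^{r-1}\}$) with the exponent $r:=\frac{s(q+1)}{q-s}$. Note that hypothesis {\rm (A2)} forces $s<\min(1,q)\le q$, so $q-s>0$ and $r$ is a well-defined positive number. A direct algebraic check gives the key identity
\begin{equation*}
\frac{q-s}{s}\cdot\frac{s(q+1)}{q-s}=q+1,
\end{equation*}
which is what makes the whole argument work.

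After expansion, the constant contribution is simply $C_{r}\,\mu^{\frac{q+1}{q-s}}\,{\rm meas}(\Omega)$, which is finite and depends only on $s,q,\mu$ and $\Omega$. For the other two contributions it suffices to show that $\int_{\Omega}|\psi(\mu,\Delta w_{n})|^{q+1}dx$ and $\int_{\Omega}|\psi(\mu,\Delta w)|^{q+1}dx$ stay bounded. Here I would invoke the upper bound in \eqref{growthpsi} of Lemma~\ref{inequalitiespsi}, namely $\psi(\mu,\theta)\theta\le|\theta|^{\frac{q+1}{q}}$ (which, together with the sign property $\psi(\mu,\theta)\theta\ge 0$ from part (iii) of Lemma~\ref{asymptoticpsizeroinfinity}, yields the pointwise bound $|\psi(\mu,\theta)|\le|\theta|^{1/q}$; the case $\mu=0$ is handled by Remark~\ref{rem:I}). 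Raising to the $(q+1)$th power gives $|\psi(\mu,\Delta w_{n})|^{q+1}\le |\Delta w_{n}|^{(q+1)/q}$ pointwise, so
\begin{equation*}
\int_{\Omega}|\psi(\mu,\Delta w_{n})|^{q+1}dx\;\le\;\int_{\Omega}|\Delta w_{n}|^{\frac{q+1}{q}}dx\;=\;\|w_{n}\|_{W}^{\frac{q+1}{q}},
\end{equation*}
and analogously for $w$.

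To close, I use Lemma~\ref{PSbounded} to conclude that $\|w_{n}\|_{W}\le M$ for some constant $M$ independent of $n$. Since the weak limit satisfies $\|w\|_{W}\le \liminf_{n}\|w_{n}\|_{W}\le M$ by weak lower semicontinuity of the norm (recall $W$ is reflexive, so the weak limit lives in $W$), the same $(q+1)$th power integral bound applies to $w$. Collecting the three estimates then yields \eqref{StrongconvergW2} with a constant $C_{s,q,\mu}$ depending only on $s,q,\mu$, $\Omega$ and the uniform bound $M$. I do not foresee any real obstacle here: the lemma is essentially an index-juggling exercise, whose only substantive input is the pointwise growth bound for $\psi(\mu,\cdot)$ from \eqref{growthpsi} and the already established $W$-boundedness of the $(PS)$-sequence.
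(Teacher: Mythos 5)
Your proposal is correct and follows essentially the same route as the paper: split the sum via $(a+b+c)^{r}\le C_r(a^r+b^r+c^r)$ with $r=\frac{s(q+1)}{q-s}$, use the pointwise bound $|\psi(\mu,\theta)|\le|\theta|^{1/q}$ coming from \eqref{growthpsi} (with Remark \ref{rem:I} for $\mu=0$), and conclude from the $W$-boundedness of the $(PS)$-sequence and of the weak limit. The only cosmetic difference is that the paper simply uses $w\in W$ for the limit term rather than invoking weak lower semicontinuity of the norm; both are fine.
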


\begin{proof}
From \eqref{growthpsi}  and the fact that  {$\frac{s(q+1)}{q-s}>0$},
\begin{multline*}
\int_{\Omega}\left(\mu^{\frac{1}{s}} + |\psi(\mu, \Delta w_n)|^{\frac{q-s}{s}}+ |\psi(\mu, \Delta w)|^{\frac{q-s}{s}}\right)^{\frac{s(q+1)}{q-s}}dx \\
\leq C_{q,s}\int_{\Omega}\left(\mu^{\frac{q+1}{q-s}} + |\psi(\mu, \Delta w_n)|^{q+1}+ |\psi(\mu, \Delta w)|^{q+1}\right)dx\\
\leq C_{q,s}\int_{\Omega}\left(\mu^{\frac{q+1}{q-s}} + |\Delta w_n|^{\frac{q+1}{q}}+ |\Delta w|^{\frac{q+1}{q}}\right)dx.
\end{multline*}

Since $\{w_n\}_{n\in\mathbb{N}}$ is bounded in $W$, inequality \eqref{StrongconvergW2} follows.
\end{proof}

We finish this section with the following lemma.

\begin{lemma}
$\Delta w_n \to \Delta w$ strongly in $L^{\frac{q+1}{q}}(\Omega)$. 
\end{lemma}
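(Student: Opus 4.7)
The plan is to deduce strong convergence of $\Delta w_n$ in $L^{\frac{q+1}{q}}(\Omega)$ by combining the strong monotonicity inequality of Lemma~\ref{technicalPSineq} with the uniform bound of Lemma~\ref{lemma:CC_3}. Write
$$A_n(x):= \mu^{\frac{1}{s}} + |\psi(\mu,\Delta w_n(x))|^{\frac{q-s}{s}} + |\psi(\mu,\Delta w(x))|^{\frac{q-s}{s}}.$$
Lemma~\ref{technicalPSineq} yields the pointwise estimate
$$|\Delta w_n - \Delta w|^{\frac{s+1}{s}} \leq \frac{1}{C}\bigl(\psi(\mu,\Delta w_n)-\psi(\mu,\Delta w)\bigr)(\Delta w_n - \Delta w)\,A_n.$$
Setting $a := \frac{s(q+1)}{q(s+1)}$, I observe that $a \in (0,1)$ (using $s<q$ from (A2)) and that a short calculation gives $\frac{a}{1-a}=\frac{s(q+1)}{q-s}$, which is precisely the exponent appearing in Lemma~\ref{lemma:CC_3}. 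Raising the pointwise inequality to the power $a$, integrating, and applying H\"older's inequality with conjugate exponents $1/a$ and $1/(1-a)$ then produces
$$\int_\Omega |\Delta w_n - \Delta w|^{\frac{q+1}{q}}\,dx \leq C_1 \left(\int_\Omega \bigl(\psi(\mu,\Delta w_n)-\psi(\mu,\Delta w)\bigr)(\Delta w_n - \Delta w)\,dx\right)^{a} \left(\int_\Omega A_n^{\frac{s(q+1)}{q-s}}\,dx\right)^{1-a},$$
and Lemma~\ref{lemma:CC_3} controls the last factor uniformly in $n$.

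It will therefore suffice to show that
$$I_n := \int_\Omega \bigl(\psi(\mu,\Delta w_n)-\psi(\mu,\Delta w)\bigr)(\Delta w_n - \Delta w)\,dx \longrightarrow 0.$$
I plan to split this as
$$I_n = DJ_{\mu,\lambda}(w_n)(w_n - w) + \int_\Omega f_+(\lambda,w_n)(w_n - w)\,dx - \int_\Omega \psi(\mu,\Delta w)(\Delta w_n - \Delta w)\,dx$$
and treat each piece separately. The first summand vanishes because $DJ_{\mu,\lambda}(w_n)\to 0$ in $W^{*}$ and $\{w_n-w\}$ is bounded in $W$ by Lemma~\ref{PSbounded}. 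The second vanishes via the compact embedding $W\hookrightarrow L^{p+1}(\Omega)$ ensured by (A1): $w_n\to w$ strongly in $L^{p+1}(\Omega)$, while $\{f_+(\lambda,w_n)\}$ is uniformly bounded in $L^{(p+1)/p}(\Omega)$ by the subcritical growth of $f_+$ together with $r<p$ from (A2), so H\"older closes the argument. The third summand vanishes because $\psi(\mu,\Delta w)\in L^{q+1}(\Omega)$ by \eqref{growthpsi} and $\Delta w_n \rightharpoonup \Delta w$ weakly in $L^{(q+1)/q}(\Omega)$, the predual of $L^{q+1}(\Omega)$.

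The main obstacle --- and really the only delicate point --- is the exponent bookkeeping in the opening H\"older step: the dual exponent $\frac{a}{1-a}$ must match the power $\frac{s(q+1)}{q-s}$ that Lemma~\ref{lemma:CC_3} controls. This rigid matching is what forces $s<q$ from hypothesis (A2) into play, so that $a\in (0,1)$. Once this calibration is in place, the remainder is a routine weak-to-strong argument that hinges on the compact embedding supplied by (A1) and on $\psi(\mu,\Delta w)$ belonging to the dual Lebesgue space.
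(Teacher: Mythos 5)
Your proposal is correct and follows essentially the same route as the paper: the same H\"older calibration with conjugate exponents $\frac{q(s+1)}{s(q+1)}$ and $\frac{q(s+1)}{q-s}$ combined with Lemma \ref{technicalPSineq} and Lemma \ref{lemma:CC_3}, and the same decomposition of $\int_\Omega\bigl(\psi(\mu,\Delta w_n)-\psi(\mu,\Delta w)\bigr)\Delta(w_n-w)\,dx$ into a $(PS)$ term, a nonlinear term handled by the compact embedding $W\hookrightarrow L^{p+1}(\Omega)$, and a term vanishing by weak convergence. The only cosmetic differences are that you phrase the H\"older step by raising the pointwise monotonicity inequality to the power $a$ instead of multiplying and dividing by the weight, and you bound the $f_+$ term by a direct H\"older estimate where the paper cites Vainberg's lemma.
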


\begin{proof} 
Observe that $w_n - w\in W$ and
\begin{equation}\label{Monotpsimu}
\begin{aligned}
0\leq \int_{\Omega}&\left(\psi(\mu,\Delta w_n)- \psi(\mu, \Delta w)\right) \Delta (w_n -w)dx \\ 
=&\underbrace{\int_{\Omega}\psi(\mu,\Delta w_n)\Delta (w_n -w)dx}_{I_{n}} -  \underbrace{\int_{\Omega} \psi(\mu, \Delta w)\Delta (w_n -w)dx}_{II_{n}}.
\end{aligned}
\end{equation}

Since $\{w_n\}_n$ is bounded in $W$, we find that for $n$ large enough,
\begin{equation}\label{FinitesetJ4.2.0}
I_{n}=\int_{\Omega} f_+(\lambda, w_n)(w_n -w)dx + o(1).
\end{equation}

Since $w_n \to w$ strongly in $L^{p+1}(\Omega)$, from Vainberg's Lemma or the continuity of Charatheodory operators, 
\begin{equation}\label{FinitesetJ4.2}
\begin{aligned}
\lim \limits_{n \to \infty}\int_{\Omega}f_+(\lambda,w_n)(w_n-w)dx 
=&0.
\end{aligned}
\end{equation}

As for the integral term $II_{n}$ in \eqref{Monotpsimu}, the linear functional 
$$
W\ni \varphi \mapsto \int_{\Omega}\psi(\mu,\Delta w)\Delta \varphi dx
$$
is continuous, so that from the weak convergence of $\{w_n\}_{n\in \mathbb{N}}$ in $W$,
\begin{equation}\label{FinitesetJ7.2}
\lim \limits_{n \to \infty}II_{n}=\lim \limits_{n\to \infty}\int_{\Omega}\psi(\mu,\Delta w)\Delta (w_n -w)dx
\to 0.
\end{equation}

It follows from \eqref{Monotpsimu}, \eqref{FinitesetJ4.2.0} \eqref{FinitesetJ4.2} and \eqref{FinitesetJ7.2} that 
\begin{equation}\label{StrongconvergW3}
\int_{\Omega}\left(\psi(\mu,\Delta w_n)- \psi(\mu, \Delta w)\right)\Delta (w_n -w)dx \to 0
\end{equation}
as $n \to \infty$.

\medskip
We finish the proof as follows. For $n\in \mathbb{N}$, set $\Omega_n:=\{\Delta w_n \neq \Delta w\}$. Using H\"{o}lder's inequality with the conjugate exponents {$\frac{q(s+1)}{s(q+1)}$ and $\frac{q(s+1)}{q-s}$, both being larger than one,} we estimate
$$
\int_{\Omega}|\Delta(w_n -w)|^{\frac{q+1}{q}}dx=\int_{\Omega_n}|\Delta(w_n -w)|^{\frac{q+1}{q}}dx=
$$
$$
\begin{aligned}
= \int_{\Omega_n}\frac{\left|\Delta(w_n -w)\right|^{\frac{q+1}{q}}\left(\mu^{\frac{1}{s}} + |\psi(\mu, \Delta w_n)|^{\frac{q-s}{s}}+ |\psi(\mu, \Delta w)|^{\frac{q-s}{s}}\right)^{\frac{s(q+1)}{q(s+1)}}}{\left(\mu^{\frac{1}{s}} + |\psi(\mu, \Delta w_n)|^{\frac{q-s}{s}}+ |\psi(\mu, \Delta w)|^{\frac{q-s}{s}}\right)^{\frac{s(q+1)}{q(s+1)}}}dx 
\end{aligned}
$$
\begin{multline}\label{StrongconvergW1}
\leq
 \left(\int_{\Omega_n}\frac{\left|\Delta(w_n -w)\right|^{\frac{s+1}{s}}}{\left(\mu^{\frac{1}{s}} + |\psi(\mu, \Delta w_n)|^{\frac{q-s}{s}}+ |\psi(\mu, \Delta w)|^{\frac{q-s}{s}}\right)}dx \right)^{\frac{s(q+1)}{q(s+1)}}\times\\
 \left(\int_{\Omega}\left(\mu^{\frac{1}{s}} + |\psi(\mu, \Delta w_n)|^{\frac{q-s}{s}}+ |\psi(\mu, \Delta w)|^{\frac{q-s}{s}}\right)^{\frac{s(q+1)}{q-s}}dx\right)^{\frac{q-s}{q(s+1)}}.
\end{multline}

On the other hand, from Lemma \ref{technicalPSineq}, \begin{multline}\label{PS3}
\left(\int_{\Omega_n}\frac{\left|\Delta(w_n -w)\right|^{\frac{s+1}{s}}}{\left(\mu^{\frac{1}{s}} + |\psi(\mu, \Delta w_n)|^{\frac{q-s}{s}}+ |\psi(\mu, \Delta w)|^{\frac{q-s}{s}}\right)}dx \right)^{\frac{s(q+1)}{q(s+1)}}\leq\\
\leq C_{s,q} \left(\int_{\Omega}\Bigl(\psi(\mu,\Delta w_n)- \psi(\mu, \Delta w)\Bigr)\Delta (w_n -w)dx \right)^{\frac{s(q+1)}{q(s+1)}}.
\end{multline}

Putting together \eqref{StrongconvergW2}, \eqref{StrongconvergW1},\eqref{PS3} we find that
\begin{multline*}
\|\Delta w_n - \Delta w\|_{L^{\frac{q+1}{q}}(\Omega)}
\leq \tilde{C}_{s,q,\mu} \int_{\Omega}\left(\psi(\mu,\Delta w_n)- \psi(\mu, \Delta w)\right)\Delta (w_n -w)dx.
\end{multline*}

The conclusion follows taking limit as $n\to \infty$ in the previous estimate and using \eqref{StrongconvergW3}.
\end{proof}

\section{Preliminary multiplicity results}\label{sect:prelim_mult}

Throughout this section we assume that $qp>1$ and also hypotheses {\rm (A1)}, {\rm (A2)} and {\rm (A3)}. Observe that $r<\frac{1}{q}<p$. We provide some of the preliminary steps in the proof of Theorem \ref{theo:bifur_diagram}. Similar ideas to the ones hereby presented can also be found in \cite{AgudeloKudlacHolubova2024}. First, we present a series of lemmas describing the geometry of the energy $J_{\mu,\lambda}$.

\begin{lemma}\label{lowerbandenergyJ}
There exist constants $c_i>0$, $i=1,2,3$, depending only on $q,p,r,s$ and $\Omega$ such that for any $\mu, \lambda \geq 0$ and for any $w\in W$, $w \neq 0$, 
\begin{equation}\label{lowerboundenergy}
J_{\mu,\lambda}(w) \geq c_1\frac{\|w\|^{\frac{s+1}{s}}_W}{\mu^{\frac{1}{s}}+ \|w\|^{\frac{q-s}{qs}}_W}
- c_2\lambda\|w\|_{W}^{r+1} - c_3\|w\|_W^{p+1}.
\end{equation}
\end{lemma}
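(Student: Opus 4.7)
The plan is to estimate $J_{\mu,\lambda}(w)=\int_{\Omega}\Psi(\mu,\Delta w)\,dx-\int_{\Omega}F_+(\lambda,w)\,dx$ by treating the two summands separately. For the second term, since $F_+(\lambda,z)=\frac{\lambda}{r+1}z_+^{r+1}+\frac{1}{p+1}z_+^{p+1}$ and \textrm{(A1)} ensures that $W$ is continuously embedded into both $L^{r+1}(\Omega)$ and $L^{p+1}(\Omega)$ via \eqref{Sobolevsubcriticalexp_q}--\eqref{SobolevEmbedding_q} with $m=r$ and $m=p$, a direct Sobolev estimate gives
$$
\int_{\Omega}F_+(\lambda,w)\,dx\;\leq\;c_2\,\lambda\,\|w\|_W^{r+1}+c_3\,\|w\|_W^{p+1}
$$
with constants depending only on $r,p,q$ and $\Omega$; this accounts for the two negative terms on the right of \eqref{lowerboundenergy}.

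The core of the argument is the lower bound
$$
\int_{\Omega}\Psi(\mu,\Delta w)\,dx\;\geq\;c_1\,\frac{\|w\|_W^{(s+1)/s}}{\mu^{1/s}+\|w\|_W^{(q-s)/(qs)}}.
$$
I would first invoke Lemma \ref{inequalitiespsi} to reduce this to a lower bound on $\int_\Omega \psi(\mu,\Delta w)\Delta w\,dx$, using $\Psi(\mu,\theta)\geq\frac{s}{s+1}\psi(\mu,\theta)\theta$, and then apply Lemma \ref{technicalPSineq} pointwise with $\theta_1=\Delta w$ and $\theta_2=0$, together with $\psi(\mu,0)=0$, to obtain
$$
\psi(\mu,\Delta w)\,\Delta w\;\geq\;\frac{C\,|\Delta w|^{(s+1)/s}}{\mu^{1/s}+|\psi(\mu,\Delta w)|^{(q-s)/s}}\qquad \text{a.e. in }\Omega.
$$

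The main technical step is then a reverse H\"older argument. Setting $a=\frac{s+1}{s}$, $b=\frac{q+1}{q}$ (so $a>b$ since $s<q$) and $h:=\mu^{1/s}+|\psi(\mu,\Delta w)|^{(q-s)/s}$, H\"older's inequality applied to the factorisation $|\Delta w|^{b}=\bigl(|\Delta w|^{a}/h\bigr)^{b/a}\cdot h^{b/a}$ with conjugate exponents $a/b$ and $a/(a-b)$ yields
$$
\int_{\Omega}|\Delta w|^{b}\,dx\;\leq\;\Bigl(\int_{\Omega}\frac{|\Delta w|^{a}}{h}\,dx\Bigr)^{b/a}\Bigl(\int_{\Omega}h^{b/(a-b)}\,dx\Bigr)^{(a-b)/a}.
$$
A direct computation gives $b/(a-b)=s(q+1)/(q-s)$ and, after rearranging,
$$
\int_{\Omega}\frac{|\Delta w|^a}{h}\,dx\;\geq\;\frac{\|w\|_W^{a}}{\bigl(\int_{\Omega}h^{s(q+1)/(q-s)}\,dx\bigr)^{(q-s)/(s(q+1))}}.
$$
To control the denominator I would use the elementary inequality $(\alpha+\beta)^\gamma\leq C_\gamma(\alpha^\gamma+\beta^\gamma)$ (for $\alpha,\beta\geq 0$, $\gamma>0$) together with the bound $|\psi(\mu,\theta)|^{q+1}\leq|\theta|^{(q+1)/q}$, which is a direct consequence of $\psi(\mu,\theta)\theta\leq|\theta|^{(q+1)/q}$ from Lemma \ref{inequalitiespsi}, to get
$$
\int_{\Omega}h^{s(q+1)/(q-s)}\,dx\;\leq\;C\bigl(\mu^{(q+1)/(q-s)}|\Omega|+\|w\|_W^{(q+1)/q}\bigr).
$$
Raising to the power $(q-s)/(s(q+1))$ distributes the two summands precisely as $\mu^{1/s}$ and $\|w\|_W^{(q-s)/(qs)}$, producing the desired denominator up to a multiplicative constant. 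Combining with the factor $s/(s+1)$ from Lemma \ref{inequalitiespsi} and the above Sobolev bound on $F_+$ yields \eqref{lowerboundenergy}.

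The main obstacle I anticipate is the exponent bookkeeping in the H\"older step and the verification that the outer exponent $(q-s)/(s(q+1))$ is compatible with the elementary subadditivity/convexity splitting of $h^{s(q+1)/(q-s)}$ (whose parity relative to $1$ depends on whether $s\gtrless q/(q+2)$); this is routine but somewhat delicate precisely because the behaviour of $\psi(\mu,\cdot)$ forces two distinct natural exponents $\frac{s+1}{s}$ and $\frac{q+1}{q}$ to coexist in the estimate.
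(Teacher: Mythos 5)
Your proposal is correct and follows essentially the same route as the paper: the Sobolev bound on $\int_\Omega F_+(\lambda,w)\,dx$, the reduction $\Psi(\mu,\theta)\geq\frac{s}{s+1}\psi(\mu,\theta)\theta$ from Lemma \ref{inequalitiespsi}, the pointwise use of Lemma \ref{technicalPSineq} with $\theta_2=0$, and the H\"older step with conjugate exponents $\frac{q(s+1)}{s(q+1)}$ and $\frac{q(s+1)}{q-s}$ together with the bound $\int_\Omega h^{s(q+1)/(q-s)}dx\leq C(\mu^{(q+1)/(q-s)}|\Omega|+\|w\|_W^{(q+1)/q})$ are precisely the steps the paper invokes by referring back to \eqref{StrongconvergW1}, \eqref{PS3} and Lemma \ref{lemma:CC_3} with $w_n=0$. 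The exponent bookkeeping you flag as a possible obstacle is harmless, since $(\alpha+\beta)^{\gamma}\leq C_\gamma(\alpha^{\gamma}+\beta^{\gamma})$ holds for every $\gamma>0$ (and one restricts the H\"older step to $\{\Delta w\neq 0\}$, as the paper does with $\Omega_n$).
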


\begin{proof}
Let $\mu,\lambda \geq 0$ be arbitrary, but fixed and take $w\in W$, $w\neq 0$. First we write, \begin{equation}\label{lowerboundenergy1}
\begin{aligned}
J_{\mu,\lambda}(w) {=}& \underbrace{\int_{\Omega} \Psi(\mu,\Delta w)dx}_{I} - \underbrace{\int_{\Omega}F_+(\lambda, w)dx}_{II}. 
\end{aligned}
\end{equation}

Using \eqref{ComparisonInequality} from Lemma \ref{inequalitiespsi} and \eqref{monotonicitypsi} from  Lemma \ref{technicalPSineq} and proceeding as we did to obtain \eqref{StrongconvergW1} and \eqref{PS3} (taking $w_n=0$ and following the same lines of the proof of Lemma \ref{lemma:CC_3}), we estimate from below the integral $I$ to find that for some constant $c_1>0$
$$
I=\int_{\Omega}\Psi(\mu,\Delta w)dx \geq c_1\frac{\|w\|^{\frac{s+1}{s}}_W}{\mu^{\frac{1}{s}}+ \|w\|^{\frac{q-s}{qs}}_W}.
$$

Next, we estimate the integral $II$. From {\rm (B)} and the Sobolev em\-beddings in  \eqref{SobolevEmbedding_q}, 
\begin{equation}\label{lowerbound3}
\begin{aligned}
II  =&\frac{\lambda}{r+1}\|{w_+}\|_{L^{r+1}(\Omega)}^{r+1} + \frac{1}{p+1}\|{w_+}\|_{L^{p+1}(\Omega)}^{p+1}\\
\leq & \frac{\lambda S_{q,r}}{r+1}\|w\|_{W}^{r+1} + \frac{S_{q,p}}{q+1}\|w\|_{W}^{p+1}.
\end{aligned}
\end{equation}

From \eqref{lowerboundenergy1} and \eqref{lowerbound3}, we find constants $c_1,c_2,c_3>0$ independent of $\mu$, $\lambda$ and $w\in W$ such that \eqref{lowerboundenergy} holds. This completes the proof.
\end{proof}

In the next lemma, we make use of the notations from Lemma \ref{lowerbandenergyJ}.

\begin{lemma}\label{positiveenergyradius}
\begin{itemize}
\item[(i)] For any $\mu,\lambda \geq 0$, any $R>0$ and for every $w\in W$ with $\|w\|_{W}\leq R$,
$$
J_{\mu,\lambda}(w) \geq - c_2\lambda R^{r+1} - c_3R^{p+1}.
$$

\item[(ii)] There exist $R_0,r_0>0$ with $r_0<R_0$ and there exist $c_0,\mu_0,\lambda_0>0$ such that given any $\mu\in [0,\mu_0]$, $\lambda\in [0,\lambda_0]$ and any $w\in W$ with $r_0 \leq \|w\|_W \leq R_0$,
$$
J_{\mu,\lambda}(w)>c_0.
$$
\end{itemize}
\end{lemma}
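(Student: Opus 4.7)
\medskip

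\textbf{Plan for part (i).} This is immediate from Lemma \ref{lowerbandenergyJ}. Since the coercive term $c_1\|w\|_W^{(s+1)/s}/(\mu^{1/s}+\|w\|_W^{(q-s)/(qs)})$ is nonnegative, one simply discards it and uses $\|w\|_W\le R$ in the exponentials $r+1>0$ and $p+1>0$ to bound the remaining two terms by $-c_2\lambda R^{r+1}-c_3 R^{p+1}$.

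\textbf{Plan for part (ii).} The strategy is to exhibit an annular window $r_0\le\|w\|_W\le R_0$ on which the coercive term in \eqref{lowerboundenergy} strictly dominates both negative contributions, for $\mu$ and $\lambda$ sufficiently small. Writing $t=\|w\|_W$ in Lemma \ref{lowerbandenergyJ}, I would first restrict $\mu$ so that $\mu_0^{1/s}\le r_0^{(q-s)/(qs)}$. For such $\mu$ and any $t\in[r_0,R_0]$, the denominator satisfies $\mu^{1/s}+t^{(q-s)/(qs)}\le 2t^{(q-s)/(qs)}$, so with the identity $\tfrac{s+1}{s}-\tfrac{q-s}{qs}=\tfrac{q+1}{q}$ we obtain
$$
J_{\mu,\lambda}(w)\ \ge\ \tfrac{c_1}{2}\,t^{\frac{q+1}{q}}\ -\ c_2\lambda\,t^{r+1}\ -\ c_3\,t^{p+1}.
$$

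Next I would invoke the standing hypothesis $pq>1$, which is exactly equivalent to $p+1>\tfrac{q+1}{q}$. Consequently the function $g(t):=\tfrac{c_1}{2}t^{(q+1)/q}-c_3 t^{p+1}$ is strictly positive on $(0,R_*)$, where $R_*:=(c_1/(2c_3))^{q/(pq-1)}$. Fix any $R_0\in(0,R_*)$, set $r_0:=R_0/2$, and put $2c_0:=\min_{t\in[r_0,R_0]}g(t)>0$. Finally, choose $\lambda_0>0$ small enough so that $c_2\lambda_0 R_0^{r+1}\le c_0$. Combining, for $\mu\in[0,\mu_0]$, $\lambda\in[0,\lambda_0]$ and any $w\in W$ with $r_0\le\|w\|_W\le R_0$,
$$
J_{\mu,\lambda}(w)\ \ge\ g(\|w\|_W)\,-\,c_2\lambda_0 R_0^{r+1}\ \ge\ 2c_0-c_0\,=\,c_0\,>\,0.
$$

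\textbf{Main obstacle.} The proof is essentially a calibration of the four constants $R_0,r_0,\mu_0,\lambda_0$; the conceptual content lies in recognizing which hypothesis is doing the work at each step. The delicate point is that the denominator in the coercive term of \eqref{lowerboundenergy} degenerates when both $\mu$ is large and $t$ is small, which forces $\mu_0$ to be chosen as a function of $r_0$ (not independently). The hypothesis $pq>1$ is exactly what guarantees the superlinear term $c_3 t^{p+1}$ is dominated by $t^{(q+1)/q}$ for small $t$; the sublinear concave term $c_2\lambda t^{r+1}$ is absorbed last, since once $R_0$ is fixed it only requires $\lambda_0$ to be small.
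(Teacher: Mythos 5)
Your proposal is correct and follows essentially the same route as the paper: both parts are calibrations of the lower bound in Lemma \ref{lowerbandenergyJ}, with part (ii) using $pq>1$ so that the $\|w\|_W^{\frac{q+1}{q}}$ term dominates $c_3\|w\|_W^{p+1}$ on an annulus, then shrinking $\mu_0$ (to control the degenerating denominator, which indeed must be tied to $r_0$) and $\lambda_0$ (to absorb the concave term). The paper merely makes the same calibration explicit — it takes $R_0$ as the maximizer of $c_1R^{\frac{q+1}{q}}-c_3R^{p+1}$ and gives closed formulas for $\mu_0,\lambda_0$, while you use a factor-2 bound on the denominator and a compactness minimum — and the only cosmetic fix needed on your side is to make the choice $c_2\lambda_0R_0^{r+1}<c_0$ strict (or halve $c_0$) so as to obtain the strict inequality $J_{\mu,\lambda}(w)>c_0$ asserted in the statement.
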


\begin{proof}
Part {\it (i)} follows directly from Lemma \ref{lowerbandenergyJ}. Next, we {prove} {\it (ii)}.

Observe from \eqref{lowerboundenergy} that {for any given $w\in W\setminus\{0\}$,}
\begin{equation}\label{ineq:lowerboundenerg}\begin{aligned}
J_{\mu,\lambda}(w) 
\geq & c_1\frac{\|w\|^{\frac{s+1}{s}}_W}{\mu^{\frac{1}{s}}+ \|w\|^{\frac{q-s}{qs}}_W}
- c_2\lambda\|w\|_{W}^{r+1} - c_3\|w\|_W^{p+1}\\
= & c_1\left(1 -\frac{\mu^{\frac{1}{s}}}{\mu^{\frac{1}{s}}+ \|w\|^{\frac{q-s}{qs}}_W}
\right)\|w\|_W^{\frac{q+1}{q}} - c_2\lambda\|w\|_{W}^{r+1} - c_3\|w\|_W^{p+1}.
\end{aligned}
\end{equation}

Next, consider the maximization problem
$$
\sup \limits_{R \geq 0}\left( c_1R^{\frac{q+1}{q}} - c_3 R^{p+1}\right).
$$

Since $qp>1$, there exists a unique $R_0>0$ such that
$$
c_1R^{\frac{q+1}{q}}_0 - c_3 R_0^{p+1}=\sup \limits_{R \geq 0}\left( c_1R^{\frac{q+1}{q}} - c_3 R^{p+1}\right)
$$
and $c_1R^{\frac{q+1}{q}}_0 - c_3 R_0^{p+1}>0$. Observe also that 
\begin{equation}\label{eq:deriv_R0}
c_1\frac{q+1}{q}R_0^{\frac{1}{q}}= c_3(p+1)R_0^{p}.
\end{equation}

Fix any $\eta \in (0,1)$ and set $r_0:=\eta R_0$. Using that {$qp>1$} we notice that $\eta^{\frac{1}{q}}\geq \eta^p$ and from  \eqref{eq:deriv_R0}  for any $R\in [r_0,R_0)$,
$$c_1R^{\frac{q+1}{q}} - c_3 R^{p+1}>0 \quad \hbox{and} \quad  c_1\frac{q+1}{q}R^{\frac{1}{q}}> c_3(p+1)R^{p}.
$$

Set $\delta_0$, $\mu_0,\lambda_0>0$ as follows
\begin{equation}\label{mulambdazero}
\delta_0:= \frac{1}{3}\frac{qp-1}{q(p+1)}, \qquad {\mu_0:= \frac{\delta_0^s \, R_0^{\frac{q-s}{q}}\eta^{\frac{s(q+1)}{q}}}{(1-\delta_0 \eta^{\frac{q+1}{q}})^s}}, \qquad \lambda_0:= \frac{\delta_0 c_1}{c_2}\eta^{\frac{q+1}{q}}R_0^{\frac{1}{q} -r}.
\end{equation}

{Observe that $\delta_0\in (0,1)$ and $\eta^{\frac{q+1}{q}}< \frac{1}{\delta_0}$ so that $\mu_0$ is well-defined.} 

\vskip 3pt
From \eqref{ineq:lowerboundenerg} for any $\mu \in [0, \mu_0]$, any $\lambda \in[0,\lambda_0]$ and any ${w}\in W$ with $r_0\leq \|w\|_W\leq R_0$, we set $\|w\|_W=R$ and we estimate $$
\begin{aligned}
J_{\mu,\lambda}(w) \geq &
c_1 R^{\frac{q+1}{q}} - c_2 \lambda R^{r+1} - c_3 R^{p+1} -\frac{c_1\mu^{\frac{1}{s}}R^{\frac{q+1}{q}}}{\mu^{\frac{1}{s}}+ R^{\frac{q-s}{qs}}}
\\
{>}& c_1\left(1 - \frac{q+1}{q}\frac{1}{p+1}\right)R^{\frac{q+1}{q}} -c_2 \lambda R^{r+1}  -\frac{c_1\mu^{\frac{1}{s}}R^{\frac{q+1}{q}}}{\mu^{\frac{1}{s}}+ R^{\frac{q-s}{qs}}}
\\
\geq & R^{\frac{q+1}{q}}\Big(c_1 \frac{qp-1}{q(p+1)} - c_2 \lambda_0 R^{r-\frac{1}{q}} \Big)-\frac{c_1\mu^{\frac{1}{s}}R^{\frac{q+1}{q}}}{\mu^{\frac{1}{s}}+ R^{\frac{q-s}{qs}}}.
\end{aligned}
$$

From {\eqref{mulambdazero}} and setting
$$
c_0:=\frac{1}{3}c_1 \frac{qp-1}{q(p+1)}r_0^{\frac{q+1}{q}}>0,
$$
we find that for any ${w}\in W$ with $r_0\leq \|{w}\|_W\leq R_0$,
$$
\begin{aligned}
J_{\mu,\lambda}(w) \geq & 2c_0  -\frac{c_1\mu^{\frac{1}{s}}R_0^{\frac{q+1}{q}}}{\mu^{\frac{1}{s}}+ R_0^{\frac{q-s}{qs}}}\geq  c_0>0.  \qquad (\hbox{since }0\leq\mu\leq\mu_0)
\end{aligned}
$$

This completes the proof of the lemma.
\end{proof}

\begin{lemma}\label{negativeenergy}
Let $R_0>0$ and $\lambda_0>0$ be as in part {\it (ii)} of Lemma \ref{positiveenergyradius}. There exists $\lambda'_0\in(0,\lambda_0)$ depending only on $q,r$, such that given any $\mu \geq 0$ and $\lambda\in (0,\lambda_{0}']$, there exists $v_{\lambda}\in W\setminus\{0\}$ satisfying that 
\begin{itemize}
\item[(i)] $\|v_{\lambda}\|_W\leq R_0$ and $\|v_{\lambda}\|_W \to 0$ as $\lambda \to 0^+$ and

\item[(ii)] $J_{\mu,\lambda}(v_{\lambda}) \leq  - \frac{1}{p+1}\|v_{\lambda}\|_{L^{p+1}(\Omega)}^{p+1}dx <0$.
\end{itemize}
\end{lemma}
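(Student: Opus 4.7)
The plan is to construct $v_\lambda$ as a scalar multiple $t(\lambda)\varphi$ of a fixed nonnegative test function, with $t(\lambda)\to 0^+$ as $\lambda \to 0^+$, so that the concave term $\frac{\lambda}{r+1}\|v_\lambda\|_{L^{r+1}}^{r+1}$ dominates the ``kinetic'' contribution $\int_\Omega \Psi(\mu,\Delta v_\lambda)\,dx$. I would fix once and for all a $\varphi \in W$ with $\varphi \geq 0$ a.e.\ and $\varphi \not\equiv 0$ (for instance the principal Dirichlet eigenfunction of $-\Delta$, which is smooth and bounded, hence in $W$). For $t>0$ set $v := t\varphi$; since $v_+ = v$, the energy reduces to
\begin{equation*}
J_{\mu,\lambda}(t\varphi) = \int_\Omega \Psi(\mu, t\Delta\varphi)\,dx - \frac{\lambda\, t^{r+1}}{r+1}\|\varphi\|_{L^{r+1}(\Omega)}^{r+1} - \frac{t^{p+1}}{p+1}\|\varphi\|_{L^{p+1}(\Omega)}^{p+1}.
\end{equation*}

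The pivot of the argument is a pointwise upper bound on $\Psi$ that is \emph{uniform in} $\mu \geq 0$: combining $\Psi(\mu,\theta) \leq \frac{q}{q+1}\psi(\mu,\theta)\theta$ from \eqref{ComparisonInequality} with $\psi(\mu,\theta)\theta \leq |\theta|^{(q+1)/q}$ from \eqref{growthpsi} (and Remark \ref{rem:I} in case $\mu=0$) gives
\begin{equation*}
\Psi(\mu,\theta) \leq \frac{q}{q+1}|\theta|^{(q+1)/q} \qquad \text{for every } \mu \geq 0, \ \theta \in \R.
\end{equation*}
Integrating and substituting into the formula for $J_{\mu,\lambda}(t\varphi)$ yields
\begin{equation*}
J_{\mu,\lambda}(t\varphi) + \frac{t^{p+1}}{p+1}\|\varphi\|_{L^{p+1}(\Omega)}^{p+1} \leq t^{r+1}\!\left(\frac{q}{q+1}\|\varphi\|_W^{(q+1)/q}\,t^{\alpha} - \frac{\lambda}{r+1}\|\varphi\|_{L^{r+1}(\Omega)}^{r+1}\right),
\end{equation*}
where $\alpha := \frac{q+1}{q} - (r+1) = \frac{1-rq}{q} > 0$ by hypothesis (A3).

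The remainder is an elementary calibration of $t$. The right-hand side above is non-positive precisely when $t^\alpha \leq \frac{\lambda (q+1)\|\varphi\|_{L^{r+1}(\Omega)}^{r+1}}{q(r+1)\|\varphi\|_W^{(q+1)/q}}$, so I would set
\begin{equation*}
t(\lambda) := \left(\frac{\lambda (q+1)\|\varphi\|_{L^{r+1}(\Omega)}^{r+1}}{q(r+1)\|\varphi\|_W^{(q+1)/q}}\right)^{1/\alpha}, \qquad v_\lambda := t(\lambda)\varphi,
\end{equation*}
and then choose $\lambda_0' \in (0,\lambda_0)$ small enough that $t(\lambda_0')\|\varphi\|_W \leq R_0$. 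Since $\alpha>0$ we have $t(\lambda) \to 0^+$ as $\lambda \to 0^+$, giving (i); meanwhile the displayed inequality with this choice of $t$ is exactly (ii), and the strict inequality $-\frac{1}{p+1}\|v_\lambda\|_{L^{p+1}}^{p+1}<0$ follows because $v_\lambda \not\equiv 0$. I do not foresee any substantive obstacle here: the essential analytic ingredient — the $\mu$-uniform upper bound on $\Psi$ — is already contained in Lemmas \ref{asymptoticsPSI}--\ref{inequalitiespsi}, and what remains is the elementary comparison $t^{r+1} \gg t^{(q+1)/q}$ as $t\to 0^+$, which is exactly what (A3) secures.
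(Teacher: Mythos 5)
Your proposal is correct and follows essentially the same route as the paper: scale a fixed nonnegative $\varphi\in W$ by a power of $\lambda$, bound the quadratic-like term via $\Psi(\mu,\theta)\leq\frac{q}{q+1}\psi(\mu,\theta)\theta\leq\frac{q}{q+1}|\theta|^{\frac{q+1}{q}}$ uniformly in $\mu\geq 0$, and use $qr<1$ (hypothesis (A3)) to let the concave term dominate for small $\lambda$. The only difference is cosmetic: the paper takes $v_\lambda=\lambda^{\sigma}w$ with $\|w\|_W=1$ and $\sigma>\frac{q}{1-qr}$ and then shrinks $\lambda_0'$, whereas you take the critical scaling $t(\lambda)\sim\lambda^{q/(1-qr)}$ with the constant tuned so the bracket vanishes, which is an equivalent calibration.
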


\begin{proof}
Let $w\in W$, {$w\geq 0$ a.e. in $\Omega$ and} with $\|w\|_W=1$ be fixed. Using hypothesis {\rm (A3)} we also fix 
\begin{equation}\label{eq:sigma_choice}
\sigma >\frac{q}{1 -qr}
\end{equation}
and  
$\lambda_0'\in\Big(0,\min\big(\lambda_0, R_0^{\frac{1}{\sigma}}\big)\Big)$. Let $\mu\geq 0$ and $\lambda 
\in(0,\lambda_0']$ and set $v_{\lambda}:= \lambda^\sigma w \in W$. Notice that for $\lambda\in (0,\lambda'_0]$, 
$$
R_0 \geq \lambda^{\sigma}= \|v_{\lambda}\|_W \to 0, \quad \hbox{ as } \lambda \to 0
$$ 
and thus proving $(i)$.

To prove $(ii)$, we estimate
$$
\begin{aligned}
J_{\mu,\lambda}(v_{\lambda}) \leq & \frac{q}{q+1}\int_{\Omega}\psi(\mu,\Delta v_{\lambda})\Delta v_{\lambda}dx - \frac{\lambda}{r+1}\int_{\Omega}|v_{\lambda}|^{r+1} dx \\
& \hspace*{5cm} - \frac{1}{p+1}|v_{\lambda}|^{p+1}dx \qquad (\hbox{from} \quad \eqref{ComparisonInequality})\\
\leq & \frac{q}{q+1}\int_{\Omega}|\Delta v_{\lambda}|^{\frac{q+1}{q}}dx - \frac{\lambda}{r+1}\int_{\Omega}|v_{\lambda}|^{r+1} dx\\
&\hspace*{5cm} - \frac{1}{p+1}|v_{\lambda}|^{p+1}dx\qquad (\hbox{from} \quad \eqref{growthpsi})\\
=&\underbrace{\frac{q}{q+1}\lambda^{\frac{\sigma(q+1)}{q}}\|w\|_{W}^{\frac{q+1}{q}} - \frac{\lambda^{1+ \sigma(r+1)}}{r+1}\|w\|_{L^{r+1}(\Omega)}^{r+1}}_{A_{\lambda}} - \frac{1}{p+1}\|v_{\lambda}\|_{L^{p+1}}^{p+1}.
\end{aligned} 
$$

The fact that $\|w\|_W=1$ yields 
$$
A_{\lambda}\leq  \frac{q}{q+1}\lambda^{\frac{\sigma(q+1)}{q}} - \frac{\lambda^{1+ \sigma(r+1)}}{r+1}\|w\|^{r+1}_{L^{r+1}(\Omega)}.
$$

{Using \eqref{eq:sigma_choice}, we may choose} $\lambda_0'>0$ smaller if necessary so that 
\begin{equation*}\label{eq:choice_lambda0}
\big(\lambda_0'\big)^{\frac{\sigma(1-qr)}{q}{-1}}\leq \frac{q+1}{q(r+1)}\|w\|_{L^{r+1}(\Omega)}^{r+1},
\end{equation*}
we find that $A_{\lambda}\leq 0$ for any $\lambda \in (0,\lambda_0']$ and consequently $J_{\mu,\lambda}(v_{\lambda})\leq - \frac{1}{p+1}\|v_{\lambda}\|_{L^{p+1}}^{p+1}<0$. This proves $(ii)$ and completes the proof of the lemma.
\end{proof}

{Let $R_0,\mu_0,\lambda_0$ be as described in Lemma \ref{positiveenergyradius} and let $\lambda_0'\in (0,\lambda_0)$ be as described in Lemma \ref{negativeenergy}}. 


\vskip 3pt
Consider the minimization problem
\begin{equation}\label{minproblemJmulambda}
{{\sf m}_{\mu,\lambda}:=}\inf \limits_{\substack{w\in W,\\ \|w\|_{W}\leq R_0}}J_{\mu,\lambda}(w)
\end{equation}
for $\mu\in [0,\mu_0]$ and $\lambda\in (0,{\lambda}_0']$. {From part $(i)$ in Lemma \ref{positiveenergyradius}, ${\sf m}_{\mu,\lambda}$ is well defined.}

\begin{prop}\label{slnminproblemJmulambda}
For any $\mu\in[0,\mu_0]$ and any $\lambda \in (0, {\lambda}_0']$, there exists ${\sf w}_{\mu,\lambda}\in W$ a non-negative weak solution of \eqref{4thorderBVP}-\eqref{eqn:navier_bd_cond} {such that $0<\|{\sf w}_{\mu,\lambda}\|_W<R_0$ and  $J_{\mu,\lambda}({\sf w}_{\mu,\lambda})={\sf m}_{\mu,\lambda}$.}
\end{prop}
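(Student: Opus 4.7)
The plan is to construct ${\sf w}_{\mu,\lambda}$ as a minimizer of $J_{\mu,\lambda}$ on the closed ball $\overline{B}_{R_0}:=\{w\in W : \|w\|_W\leq R_0\}$, show that it is nontrivial and sits strictly inside the ball (hence is a critical point), and finally promote nonnegativity through the system formulation together with the maximum principle.

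First, I would take a minimizing sequence $\{w_n\}_n\subset \overline{B}_{R_0}$ for ${\sf m}_{\mu,\lambda}$. Reflexivity of $W$ and weak closedness of $\overline{B}_{R_0}$ give, up to a subsequence, a weak limit ${\sf w}_{\mu,\lambda}\in \overline{B}_{R_0}$. The map $w\mapsto \int_{\Omega}\Psi(\mu,\Delta w)\,dx$ is weakly lower semicontinuous because $\Psi(\mu,\cdot)$ is convex and continuous (Lemma \ref{asymptoticsPSI}) and $\Delta:W\to L^{\frac{q+1}{q}}(\Omega)$ is a continuous linear operator. Under hypothesis {\rm (A1)}, the embeddings $W\hookrightarrow L^{r+1}(\Omega)$ and $W\hookrightarrow L^{p+1}(\Omega)$ are compact (see \eqref{Sobolevsubcriticalexp_q}), so $\int_{\Omega}F_+(\lambda,w_n)\,dx\to \int_{\Omega}F_+(\lambda,{\sf w}_{\mu,\lambda})\,dx$ along the subsequence. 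Combining these two observations, $J_{\mu,\lambda}({\sf w}_{\mu,\lambda})\leq \liminf_n J_{\mu,\lambda}(w_n)={\sf m}_{\mu,\lambda}$, so the infimum is attained.

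Next, I would verify that the minimum value is strictly negative, which will force ${\sf w}_{\mu,\lambda}$ into the interior of the ball. By Lemma \ref{negativeenergy}, for any $\lambda\in(0,\lambda_0']$ one can exhibit $v_\lambda\in W\setminus\{0\}$ with $\|v_\lambda\|_W\leq R_0$ and $J_{\mu,\lambda}(v_\lambda)<0$; therefore ${\sf m}_{\mu,\lambda}<0$, which in particular rules out ${\sf w}_{\mu,\lambda}=0$. Moreover, Lemma \ref{positiveenergyradius}(ii) guarantees $J_{\mu,\lambda}(w)\geq c_0>0$ whenever $r_0\leq\|w\|_W\leq R_0$. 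Since $J_{\mu,\lambda}({\sf w}_{\mu,\lambda})={\sf m}_{\mu,\lambda}<0$, we must have $\|{\sf w}_{\mu,\lambda}\|_W<r_0<R_0$. Thus ${\sf w}_{\mu,\lambda}$ is an interior local minimum of $J_{\mu,\lambda}$ in $W$, and consequently $DJ_{\mu,\lambda}({\sf w}_{\mu,\lambda})=0$ in $W^*$, i.e., ${\sf w}_{\mu,\lambda}$ is a weak solution of \eqref{4thorderBVP}-\eqref{eqn:navier_bd_cond}.

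The step I expect to be the most delicate is pointwise nonnegativity of ${\sf w}_{\mu,\lambda}$, because the functional $J_{\mu,\lambda}$ is not symmetric enough for a direct truncation argument to be useful. I would handle it by returning to the system formulation: by Theorem \ref{theo:regularity_4thorder}(iii), setting $u:=-\psi(\mu,\Delta {\sf w}_{\mu,\lambda})$ produces a classical solution pair $(u,{\sf w}_{\mu,\lambda})$ of \eqref{HS1+}, with $u\in \widetilde{W}$ and ${\sf w}_{\mu,\lambda}\in W$. The first equation then reads $-\Delta u=\lambda\, ({\sf w}_{\mu,\lambda})_+^{r}+({\sf w}_{\mu,\lambda})_+^{p}\geq 0$ in $\Omega$ with $u=0$ on $\partial\Omega$, and the maximum principle yields $u\geq 0$ in $\Omega$. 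Feeding this back into the second equation gives $-\Delta {\sf w}_{\mu,\lambda}=\mu u^s+u^q\geq 0$ in $\Omega$ with ${\sf w}_{\mu,\lambda}=0$ on $\partial\Omega$, and a second application of the maximum principle produces ${\sf w}_{\mu,\lambda}\geq 0$ in $\Omega$, concluding the argument.
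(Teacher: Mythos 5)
Your proposal is correct and follows essentially the same route as the paper: the direct method on the ball $\|w\|_W\leq R_0$ (weak lower semicontinuity of the $\Psi$-term by convexity, weak continuity of the $F_+$-term by the compact Sobolev embedding/Vainberg), strict negativity of ${\sf m}_{\mu,\lambda}$ from Lemma \ref{negativeenergy} combined with Lemma \ref{positiveenergyradius}(ii) to place the minimizer strictly inside the ball and conclude it is a critical point, and finally Theorem \ref{theo:regularity_4thorder} together with the maximum principle applied to the system formulation to obtain nonnegativity. Your explicit two-step maximum-principle argument (first $u\geq 0$, then ${\sf w}_{\mu,\lambda}\geq 0$) just spells out what the paper states in one line.
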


\begin{proof} First, recall that the functional 
$$
W\ni w \mapsto \int_{\Omega}\Psi(\mu,\Delta w)dx \in [0,\infty)
$$
is weakly lower semicontinuous. On the other hand, by virtue of the Sobolev embeddings \eqref{SobolevEmbedding_q} and Vainberg's Lemma (see \cite{VainbergBook}) the function 
$$
W\ni w\mapsto \int_{\Omega}{F}_+(\lambda,w)dx\in \R
$$
is sequentially weakly continuous. Thus, we conclude that $J_{\mu,\lambda}$ is {sequentially} weakly lower semicontinuous in $W$.

\vskip 6pt
From {Theorem 1.2, chapter I} in \cite{StruweBook2008} (see also Corollary 3.2  in \cite{BrezisBook2011}), for any $\mu \in [0,\mu_0]$ and any $\lambda \in (0,{\lambda}'_0]$ there exists ${\sf w}_{\mu,\lambda}\in W$ such that $\|{\sf w}_{\mu,\lambda}\|_W \leq R_0$ and
\begin{equation*}\label{minimizationJmulambda}
J_{\mu,\lambda}({\sf w}_{\mu,\lambda}) = \min \limits_{\substack{v\in W,\\ \|v\|_{W}\leq R_0}}J_{\mu,\lambda}(v)={\sf m}_{\mu,\lambda}.
\end{equation*}

Next, we observe from Lemma \ref{negativeenergy} that $J_{\mu,\lambda}({\sf w}_{\mu,\lambda})={\sf m}_{\mu,\lambda}<0$. The previous remark, together with part $(ii)$ in Lemma \ref{positiveenergyradius}, implies that $\|{\sf w}_{\mu,\lambda}\|_{W}<R_0$.

\vskip 6pt
Since $J_{\mu,\lambda}\in C^1(W)$,  ${\sf w}_{\mu,\lambda}$ is a critical point of $J_{\mu,\lambda}$, i.e., ${\sf w}_{\mu,\lambda}\in W$ is a weak solution of \eqref{4thorderBVP}-\eqref{eqn:navier_bd_cond}. Theorem \ref{theo:regularity_4thorder} and the maximum principle yields that ${\sf w}_{\mu,\lambda} > 0$ a.e. in $\Omega$.
\end{proof}

\vskip 3pt

Next, we find the second non-negative weak solution of \eqref{4thorderBVP}-\eqref{eqn:navier_bd_cond}, by applying the {\it Mountain Pass Theorem} (see Theorem 2.1 in {\cite{AMBROSETTIRABINOWITZ1973} and Theorem 6.1 from Chapter II in \cite{StruweBook2008}}) to the energy $J_{\mu,\lambda}$.

\begin{prop}\label{2ndnonnegativesln}
For any $\mu \in [0, \mu_0]$ and any $\lambda \in (0, {\lambda}_0']$, there exists ${\sf v}_{\mu,\lambda}\in W$ a non-negative weak solution of \eqref{4thorderBVP}-\eqref{eqn:navier_bd_cond} of mountain pass type.
\end{prop}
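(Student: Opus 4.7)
The plan is to apply the Mountain Pass Theorem to $J_{\mu,\lambda}$ on $W$, using $\mathsf{w}_{\mu,\lambda}$ from Proposition~\ref{slnminproblemJmulambda} as one of the two low points. First I would verify the mountain pass geometry. From Lemma~\ref{positiveenergyradius}(ii), on the annulus $\{r_0 \le \|w\|_W \le R_0\}$ one has $J_{\mu,\lambda}(w) > c_0 > 0$, while by Proposition~\ref{slnminproblemJmulambda} the minimizer satisfies $J_{\mu,\lambda}(\mathsf{w}_{\mu,\lambda}) = \mathsf{m}_{\mu,\lambda} < 0$, forcing $\|\mathsf{w}_{\mu,\lambda}\|_W < r_0$.

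Next I need a second point ``beyond the mountain''. Fix any nontrivial $w \in W$ with $w \ge 0$ a.e., $\|w\|_W = 1$, and consider $J_{\mu,\lambda}(tw)$ for $t > 0$. By \eqref{ComparisonInequality} and \eqref{growthpsi},
\begin{equation*}
J_{\mu,\lambda}(tw) \le \frac{q}{q+1}\, t^{\frac{q+1}{q}} \|w\|_W^{\frac{q+1}{q}} - \frac{t^{p+1}}{p+1}\|w\|_{L^{p+1}(\Omega)}^{p+1},
\end{equation*}
and since the assumption $pq > 1$ is equivalent to $p+1 > \frac{q+1}{q}$, we obtain $J_{\mu,\lambda}(tw) \to -\infty$ as $t \to \infty$. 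Choose $t_* > 0$ so large that $\|t_* w\|_W > R_0$ and $J_{\mu,\lambda}(t_* w) < \mathsf{m}_{\mu,\lambda} < 0$, and set $\mathsf{e} := t_* w$.

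Now define
\begin{equation*}
\Gamma := \{\gamma \in C([0,1], W) : \gamma(0) = \mathsf{w}_{\mu,\lambda},\ \gamma(1) = \mathsf{e}\}, \qquad c_{\mu,\lambda} := \inf_{\gamma \in \Gamma} \max_{t \in [0,1]} J_{\mu,\lambda}(\gamma(t)).
\end{equation*}
Every $\gamma \in \Gamma$ joins a point of norm $< r_0$ to one of norm $> R_0$, hence crosses the annulus in Lemma~\ref{positiveenergyradius}(ii), giving $c_{\mu,\lambda} \ge c_0 > 0 > \max\{J_{\mu,\lambda}(\mathsf{w}_{\mu,\lambda}), J_{\mu,\lambda}(\mathsf{e})\}$. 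Combined with the Palais-Smale condition established in Section~\ref{cond:PS}, the Mountain Pass Theorem produces a critical point $\mathsf{v}_{\mu,\lambda} \in W$ with $J_{\mu,\lambda}(\mathsf{v}_{\mu,\lambda}) = c_{\mu,\lambda} \ge c_0 > 0$; the strict inequality $c_{\mu,\lambda} > \mathsf{m}_{\mu,\lambda}$ ensures $\mathsf{v}_{\mu,\lambda} \ne \mathsf{w}_{\mu,\lambda}$.

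It remains to show $\mathsf{v}_{\mu,\lambda} \ge 0$ a.e. in $\Omega$. By Theorem~\ref{theo:regularity_4thorder}(iii), the pair $(\mathsf{u}_{\mu,\lambda}, \mathsf{v}_{\mu,\lambda})$ with $\mathsf{u}_{\mu,\lambda} := -\psi(\mu, \Delta \mathsf{v}_{\mu,\lambda})$ is a classical solution of the modified system \eqref{HS1+}. Since the right-hand side of the first equation in \eqref{HS1+} is $\lambda (\mathsf{v}_{\mu,\lambda})_+^r + (\mathsf{v}_{\mu,\lambda})_+^p \ge 0$, the maximum principle yields $\mathsf{u}_{\mu,\lambda} \ge 0$ in $\Omega$; then the second equation has non-negative right-hand side, so $\mathsf{v}_{\mu,\lambda} \ge 0$ in $\Omega$ by a second application of the maximum principle. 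Consequently $(\mathsf{v}_{\mu,\lambda})_+ = \mathsf{v}_{\mu,\lambda}$ and $\mathsf{v}_{\mu,\lambda}$ is the desired non-negative mountain pass weak solution. The main technical obstacle is the verification of the $(PS)$ condition needed to close the argument, but this has already been done in Section~\ref{cond:PS}; beyond that, the delicate point is ensuring the second endpoint $\mathsf{e}$ has sufficiently negative energy, which the inequality $pq > 1$ makes possible.
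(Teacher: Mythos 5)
Your proposal is correct and follows essentially the same route as the paper: unboundedness of $J_{\mu,\lambda}$ along nonnegative rays via \eqref{ComparisonInequality}, \eqref{growthpsi} and $qp>1$, the mountain pass geometry supplied by Lemma \ref{positiveenergyradius}(ii) together with the negative-energy minimizer of Proposition \ref{slnminproblemJmulambda}, the $(PS)$ condition from Section \ref{cond:PS}, and then Theorem \ref{theo:regularity_4thorder} plus the maximum principle for nonnegativity. The only differences are cosmetic (you locate ${\sf w}_{\mu,\lambda}$ inside the ball of radius $r_0$ and spell out the two applications of the maximum principle, which the paper leaves implicit).
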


\begin{proof}
 First, observe that for any $\varphi \in W\setminus \{0\}$ with $\varphi(x) \geq 0$ a.e. $x\in \Omega$ and any $t >0$, 
$$
\begin{aligned}
J_{\mu,\lambda}( t\varphi) \leq & \int_{\Omega}\psi(\mu,t\Delta \varphi)t\Delta \varphi dx - \frac{\lambda t^{r+1}}{r+1}\int_{\Omega}\varphi^{r+1} dx \\
& \hspace*{5cm}- \frac{t^{p+1}}{p+1}\int_{\Omega}\varphi^{p+1}dx \qquad \hbox{from} \quad \eqref{ComparisonInequality}\\
\leq & t^{\frac{q+1}{q}}\int_{\Omega}|\Delta \varphi|^{\frac{q+1}{q}}dx- \frac{\lambda t^{r+1}}{r+1}\int_{\Omega}\varphi^{r+1} dx\\
& \hspace*{5cm} - \frac{t^{p+1}}{p+1}\int_{\Omega}\varphi^{p+1}dx \qquad \hbox{from} \quad \eqref{growthpsi}{,}
\end{aligned} 
$$
and since {$qp>1$}, it follows from $(A2)$ that  
$$
\lim \limits_{t \to +\infty} J_{\mu,\lambda}(t\varphi) = - \infty.
$$

Let ${\sf w}_{\mu,\lambda}\in W$ be the solution to \eqref{minproblemJmulambda} predicted in Proposition \ref{slnminproblemJmulambda}. Consider also ${\tilde{\varphi}}\in W$ with ${\tilde{\varphi}} \geq 0$ a.e. in $\Omega$ and  $\|{\tilde{\varphi}}\|_W$ large enough so that $J_{\mu,\lambda}({\tilde{\varphi}})<{\sf m}_{\mu,\lambda}=J_{\mu,\lambda}({\sf w}_{\mu,\lambda})$. Set
$$
{\sf c}_{\mu,\lambda}:=\inf \limits_{\gamma\in \Gamma}\max \limits_{t\in[0,1]}J_{\mu,\lambda}(\gamma(t)),
$$
where $\Gamma:=\{\gamma\in C([0,1],W)\,:\,\gamma(0)={\sf w}_{\mu,\lambda} \quad \hbox{and} \quad \gamma(1)={\tilde{\varphi}}\}$.

\vskip 6pt
Lemmas \ref{positiveenergyradius} and \ref{negativeenergy} guarantee that ${\sf c}_{\mu,\lambda}$ is well defined and 
$$
J_{\mu,\lambda}({\tilde{\varphi}})< J_{\mu,\lambda}({\sf w}_{\mu,\lambda})< 0<\inf \limits_{\substack{v\in W\\ \|v\|_{W}=R_0}}J_{\mu,\lambda}(v)
\leq {\sf c}_{\mu,\lambda}.
$$

{Since $J_{\mu,\lambda}$ satisfies the $(PS)-$condition, Theorem 2.1 in \cite{AMBROSETTIRABINOWITZ1973} (see also Theorem 6.1 in \cite{StruweBook2008})}, yields the existence of ${\sf v}_{\mu,\lambda}\in W\setminus\{0\}$ a weak solution of \eqref{4thorderBVP} with $J_{\mu,\lambda}({\sf v}_{\mu,\lambda})={\sf c}_{\mu,\lambda}$. 

\vskip 6pt
{Theorem \ref{theo:regularity_4thorder} and the maximum principle guarantees that ${\sf v}_{\mu,\lambda} > 0$ in $\Omega$.} This completes the proof of the proposition.
\end{proof}

\begin{remark}
Since $\mu\in [0,\mu_0]$ in Propositions \ref{slnminproblemJmulambda} and \ref{2ndnonnegativesln}, these two results extend part {\rm III} from Theorem 1.3 in \cite{dosSantos2009-2} (which, according to our notations, deals with the case $\mu=0$).    
\end{remark}

\section{Proof of Theorem \ref{theo:bifur_diagram}}\label{sect:proof_theo_bif}

In this part we prove Theorem \ref{theo:bifur_diagram}. Recall that we assume {\rm (A1)}, {\rm (A2)} and {\rm (A3)} and $qp>1$. 

\vskip 3pt
Consider the fourth order BVP
 \begin{equation}\label{4thorderBVP_sublinear}
\Delta\Big(|\Delta \omega|^{\frac{1}{q}-1}\Delta \omega\Big) = \omega_+^{r} \quad \hbox{in} \quad \Omega, \qquad \omega=\Delta \omega =0 \quad \hbox{on} \quad \partial \Omega.
\end{equation}

\vskip 3pt
A straightforward minimization argument yields the existence of a nontrivial weak solution $\omega\in W$ of \eqref{4thorderBVP_sublinear}. 

\vskip 3pt
Set ${\rm u}:=-|\Delta \omega|^{\frac{1}{q}-1}\Delta \omega$ and ${\rm v}:=\omega$. Observe that ${\rm u}\in L^{q+1}(\Omega)$. Also, ${\rm v}\in W$ and from {\rm (A1)} we find that ${\rm v}\in L^{p+1}(\Omega)$. Even more, the pair $({\rm u},{\rm v})$ is a nontrivial very weak solution of the system 
$$
-\Delta {\rm u}= {\rm v}_+^r, \quad -\Delta {\rm v}=|{\rm u}|^{q-1}{\rm u} \quad \hbox{in} \quad \Omega,
$$ 
with ${\rm u}={\rm v}=0$ on  $\partial \Omega$.

\vskip 3pt 
Theorem \ref{theo:Theorem_1} yields that ${\rm u},{\rm v}\in C^2_0(\overline{\Omega})$, so that $({\rm u},{\rm v})$ is a classical solution of the system. On the other hand, from the maximum principle ${\rm u},{\rm v}>0$ in $\Omega$. 

\vskip 3pt
Let $\underline{\lambda}\geq 0$ and write $\underline{u}:=\underline{\lambda}^{\frac{1}{1-qr}}{\rm u}$ and $\underline{v}=\underline{\lambda}^{\frac{q}{1-qr}}{\rm v}$. A direct calculation shows that the pair $(\underline{u},\underline{v})$ is a classical solution of the system
\begin{equation}\label{eqn:aux_system_sublinear}
-\Delta \underline{u}=\underline{\lambda}\, \underline{v}^r, \quad -\Delta \underline{v}=\underline{u}^q \quad \hbox{in} \quad \Omega, \qquad \underline{u}=\underline{v}=0 \quad \hbox{on} \quad \partial \Omega.
\end{equation}

\begin{lemma}\label{lemma:theo1.4_I} Let $\mu \geq 0$ and $\lambda \geq \underline{\lambda}\geq 0$. For any $u,v\in C^2(\overline{\Omega})$ such that $(u,v)$ is a nontrivial solution of \eqref{HS1}, 
$$
u > \underline{u} \quad  \hbox{and} \quad v> \underline{v}\quad \hbox{in} \quad \Omega. 
$$
\end{lemma}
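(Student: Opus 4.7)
I would run a continuous sliding argument along the scaling family produced in the excerpt. If $\underline{\lambda}=0$, then $\underline{u}\equiv \underline{v}\equiv 0$ and the conclusion is immediate from $u,v>0$ in $\Omega$, so assume $\underline{\lambda}>0$ and, for $t\in [0,1]$, set
$$
\underline{u}_t \;:=\; (t\underline{\lambda})^{\frac{1}{1-qr}}\,{\rm u},\qquad \underline{v}_t \;:=\; (t\underline{\lambda})^{\frac{q}{1-qr}}\,{\rm v}.
$$
The same scaling calculation that precedes the lemma shows that $(\underline{u}_t,\underline{v}_t)$ is a classical solution of \eqref{eqn:aux_system_sublinear} with $\underline{\lambda}$ replaced by $t\underline{\lambda}$; moreover the map $t\mapsto (\underline{u}_t,\underline{v}_t)$ depends smoothly on $t$ in $C^2(\overline{\Omega})$, interpolating between $(0,0)$ at $t=0$ and $(\underline{u},\underline{v})$ at $t=1$. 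My aim is to show that
$$
T\;:=\;\bigl\{\,t\in [0,1]\,:\, u>\underline{u}_t\ \text{and}\ v>\underline{v}_t\ \text{in}\ \Omega\,\bigr\}
$$
coincides with $[0,1]$ by proving it is nonempty, relatively closed and relatively open in $[0,1]$; connectedness then forces $1\in T$.

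\textbf{Closedness via the strong maximum principle.} Nonemptiness holds since $u,v>0=\underline{u}_0=\underline{v}_0$ in $\Omega$. For closedness, given $t_n\in T$ with $t_n\to t^*$, passing to the pointwise limit yields $u\geq \underline{u}_{t^*}$ and $v\geq \underline{v}_{t^*}$ in $\Omega$. Setting $w_1:=u-\underline{u}_{t^*}$ and using $\lambda\geq \underline{\lambda}\geq t^*\underline{\lambda}$ together with monotonicity of $\sigma\mapsto \sigma^r$ on $[0,\infty)$,
$$
-\Delta w_1 \;=\; \lambda v^r + v^p - t^*\underline{\lambda}\,\underline{v}_{t^*}^{\,r} \;\geq\; t^*\underline{\lambda}\bigl(v^r-\underline{v}_{t^*}^{\,r}\bigr) + v^p \;\geq\; v^p \;>\;0 \quad\text{in}\ \Omega,
$$
so the strong maximum principle gives $w_1>0$ in $\Omega$. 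With the strict inequality $u>\underline{u}_{t^*}$ now in hand, the second equation gives
$$
-\Delta(v-\underline{v}_{t^*})\;=\; \mu u^s+u^q-\underline{u}_{t^*}^{\,q}\;\geq\; u^q-\underline{u}_{t^*}^{\,q}\;>\;0\quad\text{in}\ \Omega,
$$
and a second application of the strong maximum principle produces $v>\underline{v}_{t^*}$. Hence $t^*\in T$.

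\textbf{Openness via Hopf and continuity in $t$.} At any $t^*\in T$, the two strict positivity estimates above combined with vanishing boundary data let me invoke Hopf's boundary point lemma, giving a lower bound of the form $u-\underline{u}_{t^*}\geq \kappa\,\mathrm{dist}(\,\cdot\,,\partial\Omega)$ in $\Omega$ and an analogous one for $v-\underline{v}_{t^*}$, for some $\kappa>0$. Since $t\mapsto (\underline{u}_t,\underline{v}_t)$ is Lipschitz into $C^1(\overline{\Omega})$ (it is an explicit power function of $t$), these bounds persist, with a smaller constant, in a neighbourhood of $t^*$ in $[0,1]$. This proves $T$ is relatively open, and connectedness of $[0,1]$ gives $T=[0,1]$; in particular $1\in T$.

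\textbf{Expected main obstacle.} The principal subtlety lies in the bootstrapping inside the closedness step: the coupling of the system forces me to apply the strong maximum principle to the $u$-equation first, where the extra superlinear term $v^p$ is what keeps the right-hand side strictly positive even when $v-\underline{v}_{t^*}$ is arbitrarily small; only after securing the strict inequality $u>\underline{u}_{t^*}$ can I feed it into the $v$-equation and close the loop. Without the $v^p$ contribution, the chain of strict inequalities would break and $T$ might fail to be closed.
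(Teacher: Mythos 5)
Your proposal is correct, but it follows a genuinely different route from the paper. The paper's proof is very short: since $(u,v)$ solves \eqref{HS1}, one has $-\Delta u>\underline{\lambda}\,v^{r}$ and $-\Delta v\geq u^{q}$ in $\Omega$, so the comparison lemma for the sublinear auxiliary system (Lemma 4.2 in \cite{AGUDELORUFVELEZ2023}, which uses only $qr<1$) immediately gives the weak ordering $u\geq\underline{u}$, $v\geq\underline{v}$; the strict inequalities then follow by feeding this back into \eqref{HS1} and \eqref{eqn:aux_system_sublinear} and applying the strong maximum principle — which is exactly the computation you perform in your ``closedness'' step. What you do differently is to avoid the external comparison lemma altogether: you generate the ordering by a sliding (sweeping) argument along the explicit scaling family $(\underline{u}_t,\underline{v}_t)=\bigl((t\underline{\lambda})^{\frac{1}{1-qr}}{\rm u},(t\underline{\lambda})^{\frac{q}{1-qr}}{\rm v}\bigr)$, using the strong maximum principle for closedness and Hopf's lemma together with a $\mathrm{dist}(\cdot,\partial\Omega)$ lower bound for openness; the openness step works precisely because $\underline{u}_t-\underline{u}_{t^*}$ is a small multiple of the fixed profile ${\rm u}\leq C\,\mathrm{dist}(\cdot,\partial\Omega)$, so the Hopf bound survives small changes of $t$. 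The trade-off: your argument is self-contained (it needs $qr<1$ only through the positivity of the scaling exponents) and in effect reproves the relevant special case of the cited comparison lemma, at the cost of invoking Hopf's lemma and the barrier comparison; the paper's proof is shorter but delegates the essential comparison to the earlier reference. Two minor points: the map $t\mapsto(\underline{u}_t,\underline{v}_t)$ is only H\"older, not Lipschitz, at $t=0$ when $\frac{q}{1-qr}<1$, but continuity is all your openness argument actually uses; and your emphasis on the $v^{p}$ term is not essential for closedness — after the limit one already has $v\geq\underline{v}_{t^*}$, and $\lambda v^{r}+v^{p}-t^{*}\underline{\lambda}\,\underline{v}_{t^*}^{r}\geq 0$ with $w_1\not\equiv 0$ would also do via the strong maximum principle once one rules out $w_1\equiv 0$ (e.g.\ because $u$ solves the full equation and $\underline u_{t^*}$ does not); still, using $v^{p}>0$ as you do is a clean way to settle it.
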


\begin{proof}
From \eqref{HS1}, in $\Omega$ it holds that
$$
\begin{aligned}
-\Delta u &=\lambda v^r + v^p > \lambda v^r \geq  \underline{\lambda}v^r&\\
-\Delta v &=\mu u^s + u^q \geq u^q.&
\end{aligned}
$$

Lemma 4.2 in \cite{AGUDELORUFVELEZ2023} {(noticing that the proof of this lemma uses only that $qr<1$)} yields that $u\geq \underline{u}$ and $v\geq \underline{v}$ in $\Omega$. Thus, we estimate again using \eqref{HS1} and \eqref{eqn:aux_system_sublinear} to find that 
$$
-\Delta u > -\Delta \underline{u} \quad \hbox{and} \quad -\Delta v \geq -\Delta \underline{v} \quad \hbox{in}\quad \Omega.
$$ 

The {\it Strong Maximum Principle} yields the strict inequalities and this proves the lemma.
\end{proof}

\vskip 3pt

Next, let $\overline{\lambda}>0$ and $\overline{\mu}
\geq 0$ be such that $( \overline{\lambda},\overline{\mu})\in S$. Let $\overline{w}\in W$ be a nontrivial solution of \eqref{4thorderBVP}-\eqref{eqn:navier_bd_cond} for $ \overline{\lambda}$ and  $\overline{\mu}$. Write $\overline{u}=-\psi(\overline{\mu},\Delta \overline{w})$ and $\overline{v}=\overline{w}$ and observe that $\overline{u}\in \widetilde{W}$ and $\overline{v}\in W$.

\vskip 3pt
Let $\lambda \in (0,\overline{\lambda}]$ and $\mu\in[0,\overline{\mu}]$ be arbitrary. Fix $\underline{\lambda}\in (0,\lambda)$ and let $\underline{u}$ and $\underline{v}$ be as in \eqref{eqn:aux_system_sublinear}. From Lemma \ref{lemma:theo1.4_I}, we have that $\overline{v}>\underline{v}$ in $\Omega$.

\vskip 3pt

Consider the truncated energy,
$$
\bar{J}_{\mu,\lambda}(w):= \int_{\Omega}\Big(\Psi(\mu,\Delta w)-\bar{F}(\lambda,x,w)\Big)dx \quad \hbox{for}\quad w\in W, 
$$
where $\bar{F}(\lambda,x,\zeta):=\int_{0}^\zeta \bar{f}(\lambda,x,t)dt$ and
$$
\bar{f}(\lambda,x,\zeta):=\left\{
\begin{aligned}
f_+(\lambda,\overline{v}(x)),& \quad \hbox{if} \quad  \zeta \geq \overline{v}(x);\\
f_+(\lambda,\zeta),& \quad \hbox{if} \quad  \underline{v}(x)\leq \zeta \leq \overline{v}(x);\\
f_+(\lambda,\underline{v}(x)),& \quad \hbox{if} \quad  \zeta \leq \underline{v}(x).
\end{aligned}
\right.
$$

Observe that $\bar{f}$ is H\"{o}lder continuous and uniformly bounded. It is standard to verify that $\bar{J}_{\mu,\lambda}\in C^1(W)$ with 
$$
D\bar{J}_{\mu,\lambda}(w)\varphi=\int_{\Omega}\Big(\psi(\mu,\Delta w)\Delta \varphi - \bar{f}(\lambda,x,w)\varphi\Big)dx \quad \hbox{for}\quad \varphi \in W.
$$ 

\begin{remark}
The idea of studying truncated energy functionals is well known (see e.g. \cite{DeFigueiredo1987} and references there in).
\end{remark}

\begin{lemma}\label{lemma:theo1.4_II}
Let $w\in W$ be a critical point of $\bar{J}_{\mu,\lambda}$ in $W$. Then, $w, \psi(\mu,\Delta w)\in C^2_0(\overline{\Omega})$ and $w$ is also a critical  point of ${J}_{\mu,\lambda}$. \end{lemma}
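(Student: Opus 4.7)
The plan is to (i) upgrade the regularity of the critical point $w$ so that $u := -\psi(\mu,\Delta w)$ and $v := w$ are classical, and then (ii) prove the pointwise bounds $\underline{v} \leq v \leq \overline{v}$ in $\Omega$, which immediately give $\bar{f}(\lambda,\cdot,v) = f_+(\lambda,v)$ and hence $DJ_{\mu,\lambda}(w) = D\bar{J}_{\mu,\lambda}(w) = 0$. The regularity part is essentially immediate: by the same reasoning as in Theorem \ref{theo:regularity_4thorder} part {\it ii.}, the pair $(u,v)$ is a very weak solution of
\begin{equation*}
-\Delta u = \bar{f}(\lambda,x,v), \qquad -\Delta v = \mu|u|^{s-1}u + |u|^{q-1}u \quad \text{a.e.\ in } \Omega,
\end{equation*}
with zero Dirichlet data. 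Since $\underline{v},\overline{v}\in C(\overline{\Omega})$, $\bar{f}$ is uniformly bounded on $\Omega\times\R$ and H\"{o}lder continuous, so hypothesis {\rm (B)} of Theorem \ref{theo:Theorem_1} is satisfied (with $\bar{f}$ playing the role of $f$, in fact trivially since $\bar f$ is bounded), and that theorem yields $u,v\in C^{2,\beta}(\overline{\Omega})$. In particular $\psi(\mu,\Delta w) = -u \in C^2_0(\overline{\Omega})$.

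For the upper bound $v\leq\overline{v}$, I compare $(u,v)$ with the classical solution $(\overline{u},\overline{v})$ of \eqref{HS1} for the parameters $(\overline{\lambda},\overline{\mu})$. Testing $-\Delta(u-\overline{u}) = \bar{f}(\lambda,x,v) - f_+(\overline{\lambda},\overline{v})$ against $(u-\overline{u})_+\in H^1_0(\Omega)$ yields
\begin{equation*}
\int_{\Omega}|\nabla(u-\overline{u})_+|^{2}\,dx = \int_{\Omega}\bigl(\bar{f}(\lambda,x,v) - f_+(\overline{\lambda},\overline{v})\bigr)(u-\overline{u})_+\,dx.
\end{equation*}
The key observation is that, by the very definition of the truncation and the monotonicity of $f_+(\lambda,\cdot)$ on $[0,\infty)$, one has $\bar{f}(\lambda,x,v)\leq f_+(\lambda,\overline{v}(x))$ at every $x\in\Omega$; and since $0\leq\lambda\leq\overline{\lambda}$ and $\overline{v}\geq 0$, $f_+(\lambda,\overline{v})\leq f_+(\overline{\lambda},\overline{v})$. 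Hence the integrand is nonpositive everywhere, forcing $(u-\overline{u})_+\equiv 0$, i.e.\ $u\leq\overline{u}$ in $\Omega$. Next, testing $-\Delta(v-\overline{v}) = g(\mu,u) - g(\overline{\mu},\overline{u})$ against $(v-\overline{v})_+$ and combining $u\leq\overline{u}$, the monotonicity of $g(\mu,\cdot)$, $\mu\leq\overline{\mu}$ and $\overline{u}\geq 0$ gives $g(\mu,u)\leq g(\mu,\overline{u})\leq g(\overline{\mu},\overline{u})$, so $v\leq\overline{v}$ in $\Omega$.

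The lower bound $v\geq\underline{v}$ follows by a symmetric argument, comparing instead with the solution $(\underline{u},\underline{v})$ of \eqref{eqn:aux_system_sublinear}. Testing the equation for $u-\underline{u}$ against $(u-\underline{u})_-$, one uses the inequality $\bar{f}(\lambda,x,v) \geq f_+(\lambda,\underline{v}(x)) = \lambda\underline{v}^r + \underline{v}^p \geq \underline{\lambda}\,\underline{v}^r$, which holds because $\bar{f}$ is truncated \emph{from below} at $f_+(\lambda,\underline{v})$ and because $0<\underline{\lambda}<\lambda$; this forces $u\geq\underline{u}$. Then $-\Delta(v-\underline{v}) = g(\mu,u) - \underline{u}^q$ together with $g(\mu,u) = \mu u^s + u^q \geq u^q \geq \underline{u}^q$ (since $u\geq\underline{u}\geq 0$) yields $v\geq\underline{v}$ via the maximum principle. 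Combining the two bounds gives $\bar{f}(\lambda,\cdot,v) = f_+(\lambda,v)$ pointwise in $\Omega$, whence $DJ_{\mu,\lambda}(w) = D\bar{J}_{\mu,\lambda}(w) = 0$. The only delicate aspect is to invoke each of the monotonicities of $f_+$ in $(\lambda,\zeta)$ and of $g,\psi$ in $(\mu,\zeta)$ at exactly the right place; the construction of $\bar{f}$ is arranged precisely so that all the relevant inequalities point in the same direction.
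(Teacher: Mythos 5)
Your proposal is correct and follows essentially the same route as the paper: establish $u=-\psi(\mu,\Delta w)\in C^2_0(\overline{\Omega})$, $v=w\in C^2_0(\overline{\Omega})$ by viewing $(u,v)$ as a very weak solution of the truncated system (with $\bar f$ bounded and H\"older, so Theorem \ref{theo:Theorem_1} applies), and then sandwich $v$ between $\underline{v}$ and $\overline{v}$ by comparing with \eqref{eqn:aux_system_sublinear} and with $(\overline{u},\overline{v})$, so that the truncation is inactive and $DJ_{\mu,\lambda}(w)=D\bar{J}_{\mu,\lambda}(w)=0$. The only (harmless) differences are that you run the comparison by testing with $(\,\cdot\,)_{\pm}$ in $H^1_0(\Omega)$ rather than quoting the comparison principle directly, and you obtain the non-strict bounds $\underline{v}\leq v\leq \overline{v}$, which suffice for this lemma; the paper's strict inequality $\underline{v}<w$ is only needed afterwards for \eqref{eqn:theo1.4_strictinq}.
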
 

\begin{proof} Set $u=-\psi(\mu,\Delta w)$ and $v=w$. it is directly verified that the pair $(u,v)$ is a very weak solution of the system 
\begin{equation}\label{eqn:system_trunc_bd}
-\Delta u=\bar{f}(\lambda,x,v), \qquad -\Delta v =g(\mu,u) \quad \hbox{in }\Omega
\end{equation}
with $u=v=0$ on $\partial \Omega$, where we recall that $g(\mu,\zeta)=\mu \,|\zeta|^{s-1}\zeta+|\zeta|^{q-1}\zeta$ for $\zeta \in \R$. From Theorem \ref{theo:Theorem_1}, $u,v\in C^2_0(\overline{\Omega})$.

\vskip 3pt
Next, using that $\lambda\in (\underline{\lambda},\overline{\lambda}]$, it follows directly from the definition of $\bar{f}$ and since $v=w$,
$$
f_+(\underline{\lambda},\underline{v})< f_+(\lambda,\underline{v})\leq  \bar{f}(\lambda,x,v)\leq f_+({\lambda},\overline{v})\leq  f_+(\overline{\lambda},\overline{v}) \quad \hbox{in }\Omega.
$$

From the first equation in \eqref{eqn:system_trunc_bd}$$
-\Delta \underline{u} <  -\Delta u \leq -\Delta \overline{u} \quad \hbox{in }\Omega.
$$

The comparison principle yields that $\underline{u}<u\leq\overline{u}$ in $\Omega$. Even more, either $u<\overline{u}$ in $\Omega$ or  $u=\overline{u}$ in $\Omega$. Consequently,
$$
g(0,\underline{u})< g(\mu,u)\leq g(\overline{\mu},\overline{u}) \quad \hbox{in }\Omega.
$$

Using \eqref{eqn:aux_system_sublinear} and the second equation in \eqref{eqn:system_trunc_bd}, $$
-\Delta \underline{v} < -\Delta v \leq  -\Delta \overline{v} \quad \hbox{in }\Omega.
$$

The comparison principle and the fact that $v=w$ yield that $\underline{v}<w\leq \overline{v}$ in $\Omega$. Observe also that in the case $u<\overline{u}$, then $v<\overline{v}$ in $\Omega$. In either case $w$ is a solution of \eqref{4thorderBVP}-\eqref{eqn:navier_bd_cond}. This proves the lemma.
\end{proof}

From the previous proof we observe that in the case that $\lambda \in (\underline{\lambda},\overline{\lambda})$ and $\mu \in [0,\overline{\mu}]$, then
\begin{equation}\label{eqn:theo1.4_strictinq}
\underline{u}<u<\overline{u} \quad \hbox{and} \quad \underline{v}<v<\overline{v}  \qquad \hbox{in} \quad \Omega.
\end{equation}

The next three lemmas show the existence of a nontrivial local minimum of $J_{\mu,\lambda}$ in $W$. Set
$$
(\underline{v},\overline{v}):=\big\{\varphi \in C^2_0(\overline{\Omega})\,:\, \underline{v}<\varphi<\overline{v} \quad \hbox{in }\Omega\big\}.
$$

\begin{lemma}\label{lemma:theo1.4_IV}
There exists ${w}_0\in C^2_0(\overline{\Omega})\setminus \{0\}$, a global minimum for $\bar{J}_{\mu,\lambda}$ in $W$ and a critical point of $J_{\mu,\lambda}$ in $W$. Moreover, if $\lambda \in (\underline{\lambda},\overline{\lambda})$ and $\mu\in [0,\overline{\mu}]$, then $w_0\in (\underline{v}, \overline{v})$.
\end{lemma}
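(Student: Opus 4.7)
The plan is to apply the direct method of the calculus of variations to the truncated functional $\bar{J}_{\mu,\lambda}$ on $W$, and then to invoke Lemma~\ref{lemma:theo1.4_II} to lift a nontrivial global minimizer to a $C^2_0(\overline{\Omega})$-critical point of $J_{\mu,\lambda}$. The inclusion $w_0\in(\underline{v},\overline{v})$ for $\lambda\in(\underline{\lambda},\overline{\lambda})$ will then be an immediate consequence of the strict comparison \eqref{eqn:theo1.4_strictinq} recorded right after the proof of Lemma~\ref{lemma:theo1.4_II}.

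First I would check coercivity and weak lower semicontinuity of $\bar{J}_{\mu,\lambda}$ on $W$. Because $\underline{v},\overline{v}\in L^\infty(\Omega)$ and $f_+(\lambda,\cdot)$ is continuous, the truncated nonlinearity $\bar f(\lambda,x,\cdot)$ is uniformly bounded, and therefore $|\bar F(\lambda,x,\zeta)|\le C_\lambda(1+|\zeta|)$. Combining the lower bound for $\int_\Omega\Psi(\mu,\Delta w)\,dx$ extracted in the proof of Lemma~\ref{lowerbandenergyJ} with the Sobolev embedding $W\hookrightarrow L^{q+1}(\Omega)$ gives
\[
\bar{J}_{\mu,\lambda}(w)\ge c_1\frac{\|w\|_W^{(s+1)/s}}{\mu^{1/s}+\|w\|_W^{(q-s)/(qs)}}-C(1+\|w\|_W),
\]
and for $\|w\|_W$ large the first term is of order $\|w\|_W^{(q+1)/q}$; since $(q+1)/q>1$, $\bar{J}_{\mu,\lambda}$ is coercive. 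The convexity and continuity of $\Psi(\mu,\cdot)$ (Lemma~\ref{asymptoticsPSI}(i),(ii)) make $w\mapsto\int_\Omega\Psi(\mu,\Delta w)\,dx$ weakly l.s.c., and the Lipschitz continuity of $\bar F$ in $\zeta$ together with the compactness of $W\hookrightarrow L^1(\Omega)$ makes $w\mapsto\int_\Omega\bar F(\lambda,x,w)\,dx$ sequentially weakly continuous. Reflexivity of $W$ then produces a minimizer $w_0\in W$ of $\bar{J}_{\mu,\lambda}$.

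The main obstacle will be showing $w_0\ne 0$, which I would do by exhibiting a point where $\bar{J}_{\mu,\lambda}$ is negative. Fix $\phi\in C^\infty_c(\Omega)$ with $\phi\ge 0$ and $\phi\not\equiv 0$. Because $\underline{\lambda}>0$, the strong maximum principle applied to \eqref{eqn:aux_system_sublinear} gives $\underline{v}>0$ inside $\Omega$, so there exists $\delta>0$ with $\underline{v}\ge\delta$ on $\mathrm{supp}\,\phi$. For $0<t<\delta/\|\phi\|_\infty$ one has $0\le t\phi(x)\le\underline{v}(x)$ for every $x\in\Omega$, hence $\bar f(\lambda,x,t\phi(x))=f_+(\lambda,\underline{v}(x))$ by construction of $\bar f$, and therefore
\[
\bar{J}_{\mu,\lambda}(t\phi)=\int_\Omega\Psi(\mu,t\Delta\phi)\,dx-t\int_\Omega\phi\,f_+(\lambda,\underline{v})\,dx.
\]
By Lemma~\ref{asymptoticsPSI}(v), together with the identity $\Psi(0,\theta)=\tfrac{q}{q+1}|\theta|^{(q+1)/q}$, one has $\int_\Omega\Psi(\mu,t\Delta\phi)\,dx=O(t^\alpha)$ as $t\to 0^+$, with $\alpha=(s+1)/s$ when $\mu>0$ and $\alpha=(q+1)/q$ when $\mu=0$; in either case $\alpha>1$. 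Since $\lambda>0$ and $\phi\,f_+(\lambda,\underline{v})\not\equiv 0$, the linear-in-$t$ term dominates, and $\bar{J}_{\mu,\lambda}(t\phi)<0=\bar{J}_{\mu,\lambda}(0)$ for all sufficiently small $t>0$, so $w_0\ne 0$.

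Finally, Lemma~\ref{lemma:theo1.4_II} yields that $w_0,\psi(\mu,\Delta w_0)\in C^2_0(\overline{\Omega})$ and that $w_0$ is also a critical point of $J_{\mu,\lambda}$ in $W$. When $\lambda\in(\underline{\lambda},\overline{\lambda})$ and $\mu\in[0,\overline{\mu}]$, the strict comparison \eqref{eqn:theo1.4_strictinq} gives $\underline{v}<w_0<\overline{v}$ in $\Omega$, that is, $w_0\in(\underline{v},\overline{v})$, which concludes the proof.
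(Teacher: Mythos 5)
Your proposal is correct and follows the same overall architecture as the paper: direct method (coercivity plus sequential weak lower semicontinuity) to produce a global minimizer of $\bar{J}_{\mu,\lambda}$ in $W$, Lemma \ref{lemma:theo1.4_II} to upgrade it to a $C^2_0(\overline{\Omega})$ critical point of $J_{\mu,\lambda}$, and the strict inequalities \eqref{eqn:theo1.4_strictinq} for the inclusion $w_0\in(\underline{v},\overline{v})$ when $\lambda\in(\underline{\lambda},\overline{\lambda})$. The only genuine divergence is the nontriviality step. The paper evaluates $\bar{J}_{\mu,\lambda}$ at the explicit competitor $\underline{v}$: using \eqref{ComparisonInequality}, \eqref{growthpsi} and the equations \eqref{eqn:aux_system_sublinear} (which give $\int_\Omega|\Delta\underline{v}|^{\frac{q+1}{q}}dx=\underline{\lambda}\int_\Omega\underline{v}^{r+1}dx$) together with $\underline{\lambda}<\lambda$, it obtains $\bar{J}_{\mu,\lambda}(\underline{v})\le-\frac{\lambda}{q+1}\int_\Omega\underline{v}^{r+1}dx-\int_\Omega\underline{v}^{p+1}dx<0$ in one line. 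You instead test with $t\phi$ for a nonnegative bump $\phi$ and small $t>0$, exploiting that $\bar f$ is frozen at $f_+(\lambda,\underline{v})$ below $\underline{v}$, so the nonlinear contribution is exactly linear in $t$ while $\int_\Omega\Psi(\mu,t\Delta\phi)\,dx=o(t)$. Both are valid: the paper's choice uses the PDE structure of $\underline{v}$, whereas yours needs only $\underline{v}>0$ in $\Omega$ and the superlinear smallness of $\Psi$ near zero; for the latter, rather than passing from the pointwise asymptotics in Lemma \ref{asymptoticsPSI}(v) to the integral (which needs a small additional uniformity argument for the exponent $\frac{s+1}{s}$), it is cleaner to use the uniform bound $\Psi(\mu,\theta)\le\frac{q}{q+1}|\theta|^{\frac{q+1}{q}}$ coming from \eqref{ComparisonInequality} and \eqref{growthpsi}, which already gives $O(t^{\frac{q+1}{q}})=o(t)$ for every $\mu\ge0$.
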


\begin{proof} First observe that $\bar{J}_{\mu,\lambda}$ is coercive in $W$ and thus bounded from below. It is also readily verified that $\bar{J}_{\mu,\lambda}$ is sequentially weakly  lower semicontinuous in $W$. Applying {Theorem 1.2, chapter I} in \cite{StruweBook2008} (see also Corollary 3.2  in \cite{BrezisBook2011})
to $\bar{J}_{\mu,\lambda}$ in $W$ we find a global minimum $w_0\in W$ for $\bar{J}_{\mu,\lambda}$. Finally, from Lemma \ref{inequalitiespsi} and \eqref{eqn:aux_system_sublinear}, 
$$
\begin{aligned}
\bar{J}_{\mu,\lambda}(\underline{v})\leq & \frac{q}{q+1}\int_{\Omega}|\Delta \underline{v}|^{\frac{q+1}{q}}dx - \int_{\Omega}\overline{F}(\lambda,x,\underline{v})dx\\
=&\frac{q \,\underline{\lambda}}{q+1}\int_{\Omega}\underline{v}^{r+1}dx-\int_{\Omega}\big(\lambda \underline{v}^{r+1}+ \underline{v}^{p+1}\big)dx\\
\leq& {- \frac{\lambda}{q+1}\int_{\Omega}\underline{v}^{r+1}dx - \int_{\Omega}\underline{v}^{p+1}dx<0}.
\end{aligned}
$$ 

Thus, $\bar{J}_{\mu,\lambda}(w_0)<0$ and hence $w_0\neq 0$ in $W$. From Lemma \ref{lemma:theo1.4_II} we obtain that $w_0\in C^2_0(\overline{\Omega})$. The second part follows as in \eqref{eqn:theo1.4_strictinq}. \end{proof}

\begin{lemma}\label{lemma:theo1.4_III}
For $w\in W$ with $\underline{v}\leq w\leq \overline{v}$ a.e. in $\Omega$,
\begin{equation}\label{eqn:J_barJ}
\bar{J}_{\mu,\lambda}(w)=J_{\mu,\lambda}(w) \underbrace{-\frac{\lambda\,r}{r+1}\|\underline{v}\|^{r+1}_{L^{r+1}(\Omega)}-\frac{p}{p+1}\|\underline{v}\|^{p+1}_{L^{p+1}(\Omega)}}_{\hbox{constant}}.
\end{equation} 
\end{lemma}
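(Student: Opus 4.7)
The plan is to compute the difference $\bar{J}_{\mu,\lambda}(w) - J_{\mu,\lambda}(w)$ pointwise and show it is a constant independent of $w$ (under the pointwise bound). Since the quadratic-type part $\int_\Omega \Psi(\mu,\Delta w)\,dx$ is identical in both functionals, everything reduces to comparing the antiderivatives $\bar F(\lambda,x,w)$ and $F_+(\lambda,w)$ when $\underline v(x)\le w(x)\le \overline v(x)$.

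First I would use that $\underline v>0$ in $\Omega$ (by the strong maximum principle applied to \eqref{eqn:aux_system_sublinear}), so for $\zeta\ge 0$,
$$
F_+(\lambda,\zeta)=\tfrac{\lambda}{r+1}\zeta^{r+1}+\tfrac{1}{p+1}\zeta^{p+1}.
$$
Then, fixing $x\in\Omega$ with $\underline v(x)\le w(x)\le\overline v(x)$, I would split
$$
\bar F(\lambda,x,w(x))=\int_0^{\underline v(x)}\bar f(\lambda,x,t)\,dt+\int_{\underline v(x)}^{w(x)}\bar f(\lambda,x,t)\,dt.
$$
By the definition of $\bar f$, the first integrand is the constant $f_+(\lambda,\underline v(x))=\lambda\,\underline v(x)^{r}+\underline v(x)^{p}$, while the second integrand equals $f_+(\lambda,t)$. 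A direct computation gives
$$
\int_0^{\underline v(x)}\bar f(\lambda,x,t)\,dt=\lambda\,\underline v(x)^{r+1}+\underline v(x)^{p+1},
$$
$$
\int_{\underline v(x)}^{w(x)}\bar f(\lambda,x,t)\,dt=\tfrac{\lambda}{r+1}\bigl(w(x)^{r+1}-\underline v(x)^{r+1}\bigr)+\tfrac{1}{p+1}\bigl(w(x)^{p+1}-\underline v(x)^{p+1}\bigr).
$$

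Adding these and rearranging, I expect to obtain the key pointwise identity
$$
\bar F(\lambda,x,w(x))-F_+(\lambda,w(x))=\tfrac{\lambda r}{r+1}\underline v(x)^{r+1}+\tfrac{p}{p+1}\underline v(x)^{p+1},
$$
whose right-hand side does not involve $w$. Integrating over $\Omega$ and noting that $\Psi(\mu,\Delta w)$ cancels yields
$$
J_{\mu,\lambda}(w)-\bar J_{\mu,\lambda}(w)=\tfrac{\lambda r}{r+1}\|\underline v\|_{L^{r+1}(\Omega)}^{r+1}+\tfrac{p}{p+1}\|\underline v\|_{L^{p+1}(\Omega)}^{p+1},
$$
which rearranges to \eqref{eqn:J_barJ}. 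There is no genuine obstacle here; the only thing to be careful about is the bookkeeping in the splitting of the integral and the use of the positivity of $\underline v$ to replace $F_+$ by its explicit positive formula on the relevant range.
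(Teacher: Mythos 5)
Your proof is correct and follows essentially the same route as the paper: both split the integral defining $\bar F(\lambda,x,\cdot)$ at $\underline v(x)$, use that $\bar f(\lambda,x,\cdot)$ coincides with $f_+(\lambda,\cdot)$ on $[\underline v(x),w(x)]$, and evaluate the remaining explicit difference on $[0,\underline v(x)]$, which yields the constant $\tfrac{\lambda r}{r+1}\underline v^{r+1}+\tfrac{p}{p+1}\underline v^{p+1}$ pointwise before integrating. The only cosmetic difference is that the paper works directly with the pointwise difference $F_+(\lambda,w)-\bar F(\lambda,x,w)$ rather than computing $\bar F$ in full, but the bookkeeping and the conclusion are identical.
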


\begin{proof} Observe that for any $w\in W$,
$$
\bar{J}_{\mu,\lambda}(w)=J_{\mu,\lambda}(w) + \int_{\Omega}I_{\lambda,w}(x)dx,
$$
where 
$$
I_{\lambda,w}(x):=F_+(\lambda,w)-\bar{F}(\lambda,x,w) \quad \hbox{for a.e. }x\in \Omega. 
$$

If $w\in W$ with $\underline{v}\leq w\leq \overline{v}$ a.e in $\Omega$,
$$
\begin{aligned}
I_{\lambda,w}(x)&=\int_{0}^{w(x)}\big(f_+(\lambda,\zeta) -\bar{f}(\lambda,x,\zeta)\big)d\zeta\\
&=\int_{0}^{\underline{v}(x)}\cdots \,d\zeta + \underbrace{\int_{\underline{v}(x)}^{w(x)}\cdots \,d\zeta}_{=0}\\
&= \int_{0}^{\underline{v}(x)}\Big(f_+(\lambda,\zeta) -f_+(\lambda,\underline{v}(x))\Big)d\zeta\\
&=-\frac{\lambda\,r}{r+1}\underline{v}^{r+1}(x)-\frac{p}{p+1}\underline{v}^{p+1}(x).
\end{aligned}
$$

Integrating over $\Omega$,  equality \eqref{eqn:J_barJ} follows and this proves the lemma. \end{proof}

\begin{lemma}\label{lemma:theo1.4_V}
Let $\lambda \in (\underline{\lambda},\overline{\lambda})$ and $\mu\in [0,\overline{\mu}]$ and let $w_0\in C_0^2(\overline{\Omega})$ be a global minimum in $W$ of $\bar{J}_{\mu,\lambda}$. Then, $w_0$ is a local minimum in $W$ of ${J}_{\mu,\lambda}$ .
\end{lemma}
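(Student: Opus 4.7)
The plan is to reduce the statement to Theorem \ref{theo:local_min} by first showing that $w_0$ is a local minimum of $J_{\mu,\lambda}$ in the $C^2_0(\overline{\Omega})$-topology, and then upgrading this to a local minimum in $W$.

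First I would establish a strict separation property. Since $\lambda\in(\underline{\lambda},\overline{\lambda})$ and $\mu\in[0,\overline{\mu}]$, Lemma \ref{lemma:theo1.4_IV} yields $w_0\in(\underline{v},\overline{v})$, i.e., $\underline{v}<w_0<\overline{v}$ pointwise in $\Omega$. Since $w_0$, $\underline{v}$ and $\overline{v}$ all belong to $C^2_0(\overline{\Omega})$ and satisfy appropriate elliptic equations (for $w_0$ one uses the first-order system \eqref{eqn:system_trunc_bd}, together with \eqref{eqn:aux_system_sublinear} for $\underline{v}$ and \eqref{HS1} for $\overline{v}$), an application of the strong maximum principle and Hopf's lemma to the differences $w_0-\underline{v}$ and $\overline{v}-w_0$ (together with their counterparts $u_0-\underline{u}$ and $\overline{u}-u_0$ as in \eqref{eqn:theo1.4_strictinq}) gives strict outward normal derivative inequalities $\partial_\nu(w_0-\underline{v})<0$ and $\partial_\nu(\overline{v}-w_0)<0$ on $\partial\Omega$. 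Combining this with the $C^2$-regularity up to $\partial\Omega$, I would conclude that there exists $\delta>0$ such that
\[
\|w-w_0\|_{C^2(\overline{\Omega})}<\delta \quad \Longrightarrow \quad \underline{v}\leq w \leq \overline{v} \ \hbox{in }\Omega,
\]
for every $w\in C^2_0(\overline{\Omega})$.

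Next I would invoke Lemma \ref{lemma:theo1.4_III}: on the $\delta$-ball just constructed one has $J_{\mu,\lambda}(w)=\bar{J}_{\mu,\lambda}(w)+K$, with $K$ a constant independent of $w$. Since $w_0$ is a global minimum of $\bar{J}_{\mu,\lambda}$ in $W$, and the embedding $C^2_0(\overline{\Omega})\hookrightarrow W$ is continuous, $w_0$ is also a local minimum of $\bar{J}_{\mu,\lambda}$ in the $C^2_0(\overline{\Omega})$-topology. Combined with the constant-shift equality, this makes $w_0$ a local minimum of $J_{\mu,\lambda}$ in $C^2_0(\overline{\Omega})$. Finally, Theorem \ref{theo:local_min} (which applies under hypotheses {\rm (A1)} and {\rm (A2)}, already assumed) promotes this to local minimality of $J_{\mu,\lambda}$ in $W$, which is the claim.

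The main obstacle is the strict separation step: the inequalities $\underline{v}<w_0<\overline{v}$ in $\Omega$ by themselves are not enough to force $\underline{v}\leq w\leq \overline{v}$ for $C^2$-close $w$ near the boundary, because all three functions vanish on $\partial\Omega$. One really needs the Hopf lemma to produce the strict ordering of the normal derivatives, and for this one has to verify that the PDEs satisfied by $w_0-\underline{v}$ and $\overline{v}-w_0$ admit a suitable linear sub-/super-solution formulation to which the strong maximum principle applies. Once this analytical point is settled, the remaining steps are essentially bookkeeping.
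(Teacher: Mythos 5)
Your proposal is correct and follows essentially the same route as the paper: use Lemma \ref{lemma:theo1.4_IV} to place $w_0$ strictly between $\underline{v}$ and $\overline{v}$, use Hopf's Lemma (the differences being superharmonic by the ordering of the right-hand sides) to get a $C^2_0(\overline{\Omega})$-ball around $w_0$ contained in the order interval, apply the constant-shift identity of Lemma \ref{lemma:theo1.4_III} to transfer the global minimality of $\bar{J}_{\mu,\lambda}$ to local minimality of $J_{\mu,\lambda}$ in $C^2_0(\overline{\Omega})$, and conclude with Theorem \ref{theo:local_min}. The extra detail you give on the boundary separation via normal derivatives is precisely the content the paper compresses into its appeal to Hopf's Lemma.
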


\begin{proof}
First notice that $w_0\in (\underline{v},\overline{v})$. Write $u_0=-\psi(\mu,\Delta w_0)$ and $v_0=w_0$. Observe that $u_0,v_0 \in C^2_0(\overline{\Omega})$ and the pair $(u_0,v_0)$ is a classical solution of \eqref{eqn:system_trunc_bd} with $u_0,v_0>0$ in $\Omega$. 

\vskip 3pt
Using Hopf's Lemma we find $\delta>0$ small such that 
$$
B_{\delta}^{C^2_0}(w_0):= \big\{\varphi\in C^2_0(\overline{\Omega})\,:\, \|w_0-\varphi\|_{C^2(\overline{\Omega})}<\delta\big\}
\subset (\underline{v},\overline{v}).
$$

From \eqref{eqn:J_barJ}, $w_0$ is a local minimum of ${J}_{\mu,\lambda}$ in  $B_{\delta}^{C^2_0}(w_0)$. The conclusion of the lemma follows from Theorem \ref{theo:local_min}.
\end{proof}

Next, we show the existence of a second nontrivial critical point of $J_{\mu,\lambda}$. In this part, we assume $\mu\in [0,\overline{\mu}]$ and $\lambda\in (\underline{\lambda},\overline{\lambda})$.

\vskip 3pt
Consider the truncated energy $\hat{J}_{\mu,\lambda}:W\to \R$ defined by
$$
\hat{J}_{\mu,\lambda}(w):=\int_{\Omega}\Psi(\mu,\Delta w)dx -\int_{\Omega}\hat{F}(\lambda,x,w)dx \quad \hbox{for }w\in W,
$$
where $\hat{F}(\lambda,x,\zeta):=\int_{0}^\zeta\hat{f}(\lambda,x,t)dt$ and
$$
\hat{f}(\lambda,x,\zeta):=\left\{
\begin{aligned}
f_+(\lambda,\underline{v}(x)),& \quad \hbox{if} \quad  \zeta \leq \underline{v}(x)\\
f_+(\lambda,\zeta),& \quad \hbox{if} \quad  \underline{v}(x)\leq \zeta .
\end{aligned}
\right.
$$

Proceeding as we did above for the functional $\bar{J}_{\mu,\lambda}$, the following properties are verified:
\begin{itemize}
\item[I.] $\hat{J}_{\mu,\lambda}\in C^1(W)$ with 
$$
D\hat{J}_{\mu,\lambda}(w)\varphi=\int_{\Omega}\Big(\psi(\mu,\Delta w)\Delta \varphi - \hat{f}(\lambda,x,w)\varphi\Big)dx \quad \hbox{for}\quad \varphi \in W.
$$ 

\item[II.] If $w\in W$ is such that $D\hat{J}_{\mu,\lambda}(w)\equiv0$ in $W^*$, then $w,\psi(\mu,\Delta w)\in C^2_0(\overline{\Omega})$ and $w\geq \underline{v}$ in $\Omega$. In particular, $D{J}_{\mu,\lambda}(w)\equiv 0$ in $W^*$. 

\item[III.] For any $w\in W$ with $w \geq \underline{v}$ a.e. in $\Omega$,
$$
\hat{J}_{\mu,\lambda}(w)=J_{\mu,\lambda}(w) \underbrace{-\frac{\lambda\,r}{r+1}\|\underline{v}\|^{r+1}_{L^{r+1}(\Omega)}-\frac{p}{p+1}\|\underline{v}\|^{p+1}_{L^{p+1}(\Omega)}}_{\hbox{constant}}.
$$

\item[IV.] Since $\lambda>\underline{\lambda}>0$, then $w_0\in C^2_0(\overline{\Omega})$ (established in Lemmas \ref{lemma:theo1.4_IV}-\ref{lemma:theo1.4_V})  is a also local minimum in $W$ of $\hat{J}_{\mu,\lambda}$.

\vskip 3pt
\item[V.] Since $\underline{v}\in C^2_0(\overline{\Omega})$ and $\underline{v}>0$ in $\Omega$,  
$$
\lim \limits_{t\to \infty} \hat{J}_{\mu,\lambda}(t \underline{v})=-\infty.
$$
\end{itemize}

\begin{lemma}\label{lemma:theo1.4_VI}
Let $\lambda \in (\underline{\lambda},\overline{\lambda})$ and $\mu\in [0,\overline{\mu}]$. There exists $w_1\in C^2_0(\overline{\Omega})\setminus\{w_0\}$ a nontrivial critical point in $W$ of $\hat{J}_{\mu,\lambda}$, which is also a critical point in $W$ of $J_{\mu,\lambda}$.
\end{lemma}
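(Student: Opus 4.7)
The strategy is to apply the Mountain Pass Theorem to the truncated energy $\hat{J}_{\mu,\lambda}$, exploiting properties I--V listed above. From property IV, $w_0$ is a local minimum of $\hat{J}_{\mu,\lambda}$ in $W$, and from property V, $\hat{J}_{\mu,\lambda}(t\underline{v}) \to -\infty$ as $t\to+\infty$. Hence I can select $T>0$ large enough so that $\hat{J}_{\mu,\lambda}(T\underline{v}) < \hat{J}_{\mu,\lambda}(w_0)$ and $\|T\underline{v}-w_0\|_W$ exceeds the radius of local minimality of $w_0$. Defining
\begin{equation*}
\hat{\sf c}_{\mu,\lambda} := \inf_{\gamma \in \hat{\Gamma}} \max_{t\in[0,1]} \hat{J}_{\mu,\lambda}(\gamma(t)), \qquad \hat{\Gamma} := \{\gamma \in C([0,1],W) : \gamma(0)=w_0,\ \gamma(1)=T\underline{v}\},
\end{equation*}
this will provide a candidate min-max critical value.

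The first step is to verify the $(PS)$-condition for $\hat{J}_{\mu,\lambda}$. Since $\hat{f}(\lambda,x,\zeta)$ coincides with $f_+(\lambda,\zeta)$ for $\zeta \geq \underline{v}(x)$ and is bounded (by $f_+(\lambda,\underline{v}(x))$) for $\zeta\leq \underline{v}(x)$, the function $\hat{f}$ satisfies the same super-linear growth at infinity as $f_+$ plus a uniformly bounded perturbation. Thus the boundedness argument of Lemma \ref{PSbounded} and the strong convergence proof in Section \ref{cond:PS} carry over with only cosmetic changes (the extra bounded terms contribute $o(1)$ under weak convergence), yielding $(PS)$ for $\hat{J}_{\mu,\lambda}$.

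The second step is to apply the Mountain Pass Theorem. If $w_0$ is a \emph{strict} local minimum of $\hat{J}_{\mu,\lambda}$, then for small $\eta>0$ we have $\inf_{\|w-w_0\|_W=\eta}\hat{J}_{\mu,\lambda}(w) > \hat{J}_{\mu,\lambda}(w_0)$, and Theorem 2.1 in \cite{AMBROSETTIRABINOWITZ1973} produces a critical point $w_1 \in W$ with $\hat{J}_{\mu,\lambda}(w_1) = \hat{\sf c}_{\mu,\lambda} > \hat{J}_{\mu,\lambda}(w_0)$, so that $w_1 \neq w_0$. If $w_0$ is not a strict local minimum, then in every $W$-ball around $w_0$ there exists $\tilde{w}\neq w_0$ with $\hat{J}_{\mu,\lambda}(\tilde{w}) = \hat{J}_{\mu,\lambda}(w_0)$; by a standard variant (e.g.\ Ghoussoub--Preiss or the argument in Brezis--Nirenberg \cite{BREZISNIRENBERG1993}), this already yields a second critical point distinct from $w_0$ at the same energy level. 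In either case, a critical point $w_1 \neq w_0$ of $\hat{J}_{\mu,\lambda}$ is produced.

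Finally, by property II the critical point $w_1$ satisfies $w_1 \geq \underline{v}$ in $\Omega$, lies in $C^2_0(\overline{\Omega})$, and is also a critical point of $J_{\mu,\lambda}$ in $W$. The distinctness $w_1 \neq w_0$ is preserved. \emph{The main obstacle I anticipate} is the case where $w_0$ is a degenerate local minimum of $\hat{J}_{\mu,\lambda}$: here one cannot directly conclude $\hat{\sf c}_{\mu,\lambda}>\hat{J}_{\mu,\lambda}(w_0)$ and must invoke the refined min-max/deformation argument mentioned above to separate $w_1$ from $w_0$; alternatively one can perturb the class $\hat{\Gamma}$ slightly and use Ekeland's variational principle on level sets to secure a second critical point.
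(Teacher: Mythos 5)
Your proposal is correct and follows essentially the paper's route: the paper likewise relies on (PS), on property V for a point of lower energy, and on a mountain-pass argument around the (possibly degenerate) local minimum $w_0$, handling degeneracy by distinguishing whether $w_0$ is an isolated critical point and invoking Proposition 5.42 in \cite{MotrMotrPapa2014} (see also \cite{PUCCISERRIN1984}), and then notes nontriviality via $D\hat{J}_{\mu,\lambda}(0)\neq 0$ (in your write-up this also follows from $w_1\geq \underline{v}>0$ by property II). One small caveat: strict local minimality alone does not give $\inf_{\|w-w_0\|_W=\eta}\hat{J}_{\mu,\lambda}(w)>\hat{J}_{\mu,\lambda}(w_0)$ in infinite dimensions; that sphere condition requires the (PS) condition through an Ekeland/Pucci--Serrin type argument, which is precisely what the paper's citations (and your fallback references) supply, so the gap is only one of attribution, not of substance.
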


\begin{proof}
By virtue of hypothesis (A1), $\hat{J}$ satisfies the $(PS)-$condition. On the other hand, using IV., if $w_0$ is not an isolated critical point of $\hat{J}_{\mu,\lambda}$, then there are infinitely many critical points and a nontrivial second one can be selected. Otherwise, $w_0$ is an isolated critical point of $\hat{J}_{\mu,\lambda,}$ and Proposition 5.42 in \cite{MotrMotrPapa2014} (see also {Theorem 1 in \cite{PUCCISERRIN1984}}) yields the existence of  another critical point $w_1\in W$ with $\hat{J}_{\mu,\lambda}(w_1)>\hat{J}_{\mu,\lambda}(w_0)$.

\vskip 3pt
Since, 
$$
D\hat{J}_{\mu,\lambda}(0)\varphi = -\int_{\Omega}f_+(\lambda,\underline{v}(x))\varphi dx \quad \hbox{for }\varphi\in W,
$$
then $w_1\neq 0$ in $W$. From II., $w_1\in C^2_0(\overline{\Omega})$ is a critical point of $J_{\mu,\lambda}$. This concludes the proof.
\end{proof}

\begin{lemma}\label{lemma:theo1.4_VII}
Let $\mu\geq 0$ and $ \widehat{\lambda}>0$. Assume that $\{\widehat{\lambda}_k\}_{k}\subset (0,\infty)$ is a strictly increasing sequence such that for any $k\in \mathbb{N}$, $(\widehat{\lambda}_k,\mu)\in S$ and $\widehat{\lambda}_k\to \widehat{\lambda}$. Then, $(\widehat{\lambda},\mu)\in S$. 
\end{lemma}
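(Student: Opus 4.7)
The plan is to produce a solution at $(\widehat{\lambda},\mu)$ as the limit of (carefully chosen) solutions at $(\widehat{\lambda}_k,\mu)$, using the compactness machinery of Section~\ref{cond:PS}. For each $k$, let $\widetilde{w}_k\in W$ be a weak solution of \eqref{4thorderBVP}-\eqref{eqn:navier_bd_cond} at $(\widehat{\lambda}_k,\mu)$, which exists by hypothesis; by Theorem~\ref{theo:regularity_4thorder}, the pair $(\widetilde{u}_k,\widetilde{v}_k):=(-\psi(\mu,\Delta \widetilde{w}_k),\widetilde{w}_k)$ is a positive classical solution of \eqref{HS1}. Rather than passing to the limit directly with $\widetilde{w}_k$, I would replace it by the local-minimum solution $w_k\in C^2_0(\overline{\Omega})$ of Lemma~\ref{lemma:theo1.4_IV} applied to the pair $(\widehat{\lambda}_k,\mu)$, using as sub-solution the fixed function $\underline{v}$ from \eqref{eqn:aux_system_sublinear} (for some $\underline{\lambda}\in(0,\widehat{\lambda}_1)$) and as super-solution the already available $\widetilde{w}_k$. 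Then $\underline{v}\leq w_k\leq \widetilde{w}_k$ in $\Omega$ and $w_k$ is a critical point of $J_{\mu,\widehat{\lambda}_k}$.

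The main obstacle is to show that $\{w_k\}$ is uniformly bounded in $W$. The construction yields $\bar{J}_{\mu,\widehat{\lambda}_k}(w_k)\leq \bar{J}_{\mu,\widehat{\lambda}_k}(\underline{v})$, and by \eqref{eqn:J_barJ} this translates into a uniform upper bound on $J_{\mu,\widehat{\lambda}_k}(w_k)$, since $\underline{v}$ is fixed and $\{\widehat{\lambda}_k\}$ is bounded by $\widehat{\lambda}$. Because $DJ_{\mu,\widehat{\lambda}_k}(w_k)=0$, the remark following Lemma~\ref{PSbounded} applies: the whole argument of Lemma~\ref{PSbounded} goes through with $\lambda_n$ replaced by $\widehat{\lambda}_k$, and the crucial exponent comparison $r+1<\tfrac{q+1}{q}$ given by (A3) yields $\|w_k\|_W\leq C$, uniformly in $k$.

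With the uniform bound at hand, reflexivity of $W$ gives a subsequence $w_k\rightharpoonup w$ weakly in $W$; under (A1), the compact embeddings $W\hookrightarrow L^{r+1}(\Omega)$ and $W\hookrightarrow L^{p+1}(\Omega)$ upgrade this to strong convergence in both Lebesgue spaces. Following exactly the scheme of the Palais--Smale analysis in Section~\ref{cond:PS} — testing the weak formulations against $w_k-w$, using the strong convergence in $L^{p+1}$ to handle the right-hand side, and invoking the monotonicity inequality \eqref{monotonicitypsi} — one deduces $\Delta w_k\to\Delta w$ in $L^{(q+1)/q}(\Omega)$. Passing to the limit in
$$\int_{\Omega}\psi(\mu,\Delta w_k)\Delta\varphi\,dx=\widehat{\lambda}_k\int_{\Omega}(w_k)_+^r\varphi\,dx+\int_{\Omega}(w_k)_+^p\varphi\,dx,\qquad \varphi\in W,$$
together with $\widehat{\lambda}_k\to\widehat{\lambda}$, identifies $w\in W$ as a weak solution of \eqref{4thorderBVP}-\eqref{eqn:navier_bd_cond} at $(\widehat{\lambda},\mu)$.

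Non-triviality is immediate from Lemma~\ref{lemma:theo1.4_I}: since $w_k=\widetilde{v}_k\geq\underline{v}>0$ in $\Omega$ for every $k$ (by strict comparison, $\widehat{\lambda}_k\geq\widehat{\lambda}_1>\underline{\lambda}$), passing to the limit a.e. yields $w\geq\underline{v}>0$ in $\Omega$. Theorem~\ref{theo:regularity_4thorder} then promotes $w$ to a classical pair $(u,v):=(-\psi(\mu,\Delta w),w)$ solving \eqref{HS1} at $(\widehat{\lambda},\mu)$, so $(\widehat{\lambda},\mu)\in S$. The delicate ingredient throughout is the variational selection of $w_k$ via the truncation of Lemma~\ref{lemma:theo1.4_IV}, which is what makes the energy — and hence the $W$-norm — uniformly controllable along the sequence.
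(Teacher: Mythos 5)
Your proposal is correct and follows essentially the same strategy as the paper: select, for each parameter value, the global minimizer of a truncated functional sandwiched between the fixed sub-solution $\underline{v}$ and a known solution, use the resulting uniform energy upper bound together with the critical-point identity (the argument of Lemma \ref{PSbounded} with varying $\lambda$) to bound the sequence in $W$, invoke the compactness of Section \ref{cond:PS} to pass to the limit as $\widehat{\lambda}_k\to\widehat{\lambda}$, and conclude nontriviality from the lower bound $w_k\geq \underline{v}>0$. The only difference is bookkeeping: you truncate directly at $\lambda=\widehat{\lambda}_k$ with the solution at that same parameter as super-solution, whereas the paper interlaces intermediate values $\widehat{\lambda}_{2m}\in(\widehat{\lambda}_{2m-1},\widehat{\lambda}_{2m+1})$ and uses the solution at $\widehat{\lambda}_{2m+1}$ as super-solution; both variants are covered by Lemmas \ref{lemma:theo1.4_I}--\ref{lemma:theo1.4_V}.
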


\begin{proof}
Let $\mu\geq 0$ and for the sake of convenience we index the aforementioned sequence as $\{\widehat{\lambda}_{2m-1}\}_{m\in \mathbb{N}}$ with  $\widehat{\lambda}_{2m-1}<\widehat{\lambda}_{2m+1}$ and $\widehat{\lambda}_{2m-1}\to \widehat{\lambda}$ as $m\to \infty$.

\vskip 3pt
For each $m\in \mathbb{N}$, let $w_{2m-1}\in C^2_0(\overline{\Omega})$ be a weak solution to \eqref{4thorderBVP}-\eqref{eqn:navier_bd_cond} with $\lambda=\widehat{\lambda}_{2m-1}$ and let also $\omega\in C^2_0(\overline{\Omega})$ be the weak solution of \eqref{4thorderBVP_sublinear}.  

\vskip 3pt
For each $m\in \mathbb{N}$, select $\widehat{\lambda}_{2m}\in (\widehat{\lambda}_{2m-1},\widehat{\lambda}_{2m+1})$ and set 
$$
\underline{\lambda}:=
\widehat{\lambda}_{2m-1}, \qquad \lambda=\widehat{\lambda}_{2m} \qquad \hbox{and} \qquad \overline{\lambda}:=
\widehat{\lambda}_{2m+1}. 
$$  

Write also 
$$
\underline{v}(x):=
\underline{\lambda}^{\frac{q}{1-qr}}\omega(x)\qquad \hbox{and}\qquad \overline{v}(x):=w_{2m+1}(x).
$$

Using that $\underline{\lambda} <\lambda<\overline{\lambda}$ and invoking Lemma \ref{lemma:theo1.4_I}, we find that $$
\underline{v}<\overline{v} \quad \hbox{in} \quad \Omega.
$$

From Lemmas \ref{lemma:theo1.4_II}, \ref{lemma:theo1.4_IV},   \ref{lemma:theo1.4_III} and \ref{lemma:theo1.4_V}, there exists $w_{2m}\in C^2_0(\overline{\Omega})$ a global minimum of $\bar{J}_{\mu,\widehat{\lambda}_{2m}}$ in $W$ with 
$$
\widehat{\lambda}^{\frac{q}{1-qr}}_{2m-1}\omega< w_{2m}
<w_{2m+1}\quad \hbox{in} \quad \Omega.
$$ 
 
Using \eqref{eqn:J_barJ}, we estimate
\begin{equation}\label{ineq:bounded_seqI}
\begin{aligned}
J_{\mu,\widehat{\lambda}_{2m}}(w_{2m})&= \bar{J}_{\mu,\widehat{\lambda}_{2m}}(w_{2m})+\frac{\widehat{\lambda}_{2m}\,r}{r+1}\|\underline{v}\|^{r+1}_{L^{r+1}(\Omega)}+\frac{p}{p+1}\|\underline{v}\|^{p+1}_{L^{p+1}(\Omega)}\\
&\leq \bar{J}_{\mu,\widehat{\lambda}_{2m}}(\widehat{\lambda}^{\frac{q}{1-qr}}_{1}\omega)+\frac{\widehat{\lambda}_{2m}\,r}{r+1}\|\underline{v}\|^{r+1}_{L^{r+1}(\Omega)}+\frac{p}{p+1}\|\underline{v}\|^{p+1}_{L^{p+1}(\Omega)}\\
&\leq \bar{J}_{\mu,\widehat{\lambda}_{1}}(\widehat{\lambda}^{\frac{q}{1-qr}}_{1}\omega)+\frac{\widehat{\lambda}_{2m}\,r}{r+1}\|\underline{v}\|^{r+1}_{L^{r+1}(\Omega)}+\frac{p}{p+1}\|\underline{v}\|^{p+1}_{L^{p+1}(\Omega)}
\end{aligned}
\end{equation}
and consequently, $\{J_{\mu,\widehat{\lambda}_{2m}}(w_{2m})\}_{m}$ is bounded from above. 

\vskip 3pt
On the other hand, using that $w_{2m}$ is a weak solution of \eqref{4thorderBVP}-\eqref{eqn:navier_bd_cond},

\begin{equation}\label{ineq:bounded_seqII}
\begin{aligned}
J_{\mu,\widehat{\lambda}_{2m}}(w_{2m}) &=\int_{\Omega}\Psi(\mu,\Delta w_{2m})dx -\int_{\Omega}F_+(\widehat{\lambda}_{2m},w_{2m})dx\\
&= \int_{\Omega}\Big(\Psi(\mu,\Delta w_{2m})-\frac{1}{p+1}\psi(\mu,\Delta w_{2m})\Delta w_{2m}\Big)dx \\
& \qquad \qquad \qquad -\frac{\widehat{\lambda}_{2m}(p-r)}{(p+1)(r+1)}\int_{\Omega}(w_{2m})^{r+1}_+dx.
\end{aligned}
\end{equation}

Using part {\it (iv)} from Lemma \ref{asymptoticpsizeroinfinity} and  {\it (v)} from Lemma \ref{asymptoticsPSI}, and proceeding as in the proof of Lemma \ref{PSbounded}, for some constants $\mathcal{K}_{q,p},\mathcal{K}>0$ independent of $m$,
\begin{equation}\label{ineq:bounded_seqIII}
J_{\mu,\widehat{\lambda}_{2m}}(w_{2m}) \geq {\mathcal{K}_{q,p}}\|w_{2m}\|_W^{\frac{q+1}{q}} -\frac{\widehat{\lambda}_{2m}S_{q,r}(p-r)}{(p+1)(r+1)}\|w_{2m}\|_W^{r+1} -\mathcal{K},
\end{equation}
where for some $\delta>0$ small
$$
\mathcal{K}_{q,p}=\frac{qp-1}{(q+1)(p+1)} - \delta\frac{qp+p+q+1}{(q+1)(p+1)}.
$$

Since $\widehat{\lambda}_{2m} \to \widehat{\lambda}$, it follows from \eqref{ineq:bounded_seqI}, \eqref{ineq:bounded_seqII} and \eqref{ineq:bounded_seqIII} that the sequence $\{w_{2m}\}_{m}$ is bounded in $W$. The above procedure yields that $\{w_{2m}\}_{m\in\mathbb{N}}$ is a bounded $PS-$sequence in $W$ for $J_{\mu,\widehat{\lambda}}$. 

\vskip 3pt
The results from Section 5 imply that, passing to a subsequence if necessary, $w_{2m}\to w_*$ strongly in $W$ and $w_*$ is a weak solution of \eqref{4thorderBVP}-\eqref{eqn:navier_bd_cond} for $\mu$ and $\widehat{\lambda}$. By virtue of (A1), $w_{2m}(x)\to w_*(x)$ for $x\in \Omega$. So that $w_*(x)\geq\widehat{\lambda}^{\frac{q}{1-qr}}\omega(x)>0$ for $x\in \Omega$. This completes the proof of the lemma.
\end{proof}

\begin{remark}
    At this point, we would like to summarize part of the content of the previous lemmas. If we draw the $\lambda \mu $-plane and locate the point $(\overline{\lambda}, \overline{\mu}) \in S$ in it,  then the rectangle $(0,\overline{\lambda}]\times [0,\overline{\mu}]$ is completely contained in $S$. This fact will be extensively used in the proof of our main theorem, which we now present.
\end{remark}

\underline{\it Proof of Theorem \ref{theo:bifur_diagram}}. From Propositions \ref{slnminproblemJmulambda} and \ref{2ndnonnegativesln},  $(0,\lambda_0')\times[0,\mu_0]\subset S$. Given $\mu \geq 0$ define
$\Lambda_\mu:=\big\{\lambda \in (0,\infty)\,:\, (\lambda,\mu)\in S \big\}$ and define
$$
\lambda_*(\mu):=\left\{ \begin{aligned}
\sup \Lambda_\mu &, \quad \hbox{if }\Lambda_{\mu}\neq \phi\\
0\quad &,\quad \hbox{if } \Lambda_{\mu}=\phi.
\end{aligned}
\right.
$$

Observe that $\lambda_*(\mu)=0$ if and only if $\Lambda_{\mu}=\phi$. From Proposition \ref{slnminproblemJmulambda} (or Theorem 1.3 part III in \cite{dosSantos2009-2}), $\lambda_*(0)>0$. 

\vskip 3pt 
First, we prove part {\it (i)}. Let $\overline{\mu} \geq 0$ and assume that $\lambda_{*}(\overline{\mu})\in (0,\infty]$. For any $\lambda\in (0, \lambda_*(\overline{\mu}))$ we may select $\overline{\lambda}$ such that 
$$
0<\lambda<\overline{\lambda}<\lambda_*(\overline{\mu}) \quad \hbox{and} \quad (\overline{\lambda},\overline{\mu})\in S.
$$

From Lemmas \ref{lemma:theo1.4_II}, \ref{lemma:theo1.4_IV},   \ref{lemma:theo1.4_III}, \ref{lemma:theo1.4_V} and \ref{lemma:theo1.4_VI} for any $\mu\in [0,\overline{\mu}]$, \eqref{HS1} has two nontrivial solutions. Besides, since $\lambda$ and $\mu$ are arbitrary, we conclude also  that $(0,\lambda_*(\overline{\mu}))\times [0,\overline{\mu}]\subset S$. This concludes the proof of part {\it i.}.

Moreover, the above considerations together with Lemma \ref{lemma:theo1.4_VII} show that given any $(\overline{\lambda},\overline{\mu})\in S$, then $(0,\overline{\lambda}]\times [0,\overline{\mu}]\subset S$. This shows that $S$ is path-connected.\\

Next, let $\mu_1$ and $\mu_2$ with $0\leq \mu_1<\mu_2<\infty$. We show that $\lambda_*(\mu_1)\geq \lambda_*(\mu_2)$. Arguing by contradiction, assume that $\lambda_*(\mu_2)>\lambda_*(\mu_1)$. Thus, for some $\lambda\in (\lambda_*(\mu_1), \lambda_*(\mu_2))$, we find that $(0,\lambda]\times [0,\mu_1]\subset S$, which contradicts the definition of $\lambda_*(\mu_1)$. This proves the claim and shows that $\lambda_*$ is decreasing.

Part {\it (ii)} in Theorem \ref{theo:bifur_diagram} follows directly from the definition of $\lambda_*(\mu)$ and from Lemma \ref{lemma:theo1.4_VII}. Similarly, part {\it (iii)} follows directly from the definition of $\lambda_*(\mu)$. Part {\it (v)} of Theorem \ref{theo:bifur_diagram} follows directly, for $\mu>0$, from the definition of $\lambda_*(\mu)$ and \eqref{ineq:mulambdahyperb}. The case $\mu=0$ follows from Theorem 1.3 part V in \cite{dosSantos2009-2}.\\
{To prove {\it iv.}, and in view of the non-increasing character of $\lambda_*$, it is enough to prove that on any open interval of the set $\{\mu\in (0,\infty)\,/\,\lambda_*(\mu)\in (0,\infty)\}$, $\lambda_*$ is not constant. We start by recalling the inequality 
\begin{equation}\label{ineq:concv-convex}
|x^{m}-y^{m}|\leq C_m|x-y|^m
\end{equation}
for $m\in (0,\infty)$ and $x,y\geq 0$.} 

{Arguing by contradiction let $0<\mu_1<\mu_2$ be such that for any $\mu\in [\mu_1,\mu_2]$, $\lambda_*(\mu)=\lambda_*\in (0,\infty)$. Let also $\mu\in (\mu_1,\mu_2)$ be arbitrary and consider $u_*,v_*\in C^2_0(\overline{\Omega})$ such that $(u_*,v_*)$ solve the system \eqref{HS1} for the pair \red{$(\lambda_*,\mu)$}. Consider also $\varphi\in C^2_0(\overline{\Omega})$ such that $-\Delta \varphi=v_*^r$ and $\varphi>0$ in $\Omega$.}  

{Let $\epsilon, \delta>0$ be arbitrary and such that $(\mu-\delta,\mu+\delta )\subset [\mu_1,\mu_2]$, $\epsilon/\delta^{\frac{1}{s}}$ is sufficiently small, and \begin{equation}\label{ineq:supersln_u}
u_*\geq \frac{\epsilon}{\delta^{\frac{1}{s}}} \varphi\Big[\mu C_s + C_q\epsilon^{q-s}\|\varphi\|^{q-s}_{L^{\infty}(\Omega)}\Big]^{\frac{1}{s}} \quad \hbox{in} \quad \Omega,
\end{equation}
where $C_s$ and $C_q$ are the constants from the corresponding  inequalities in \eqref{ineq:concv-convex} for $m=s$ and $m=q$, respectively. Observe also that the fact that $q>s$ and Hopf's Lemma applied to $u_*$ and $\varphi$ yield an open set in $(0,\infty)\times (0,\infty)$ of pairs $(\delta,\epsilon)$ satisfying the previous assumptions and this open set depends only on the choice of on $u_*$ and $v_*$.}  

{Set $u=u_*+\epsilon \varphi$ and $v=v_*$. We show next that $(u,v)$ is a supersolution of the system \eqref{HS1} for the pair of parameters $(\mu-\delta,\lambda_*+\epsilon)$. We calculate first,
$$
\begin{aligned}
    -\Delta u  =&-\Delta u_* - \epsilon\Delta \varphi\\
    =& \lambda_*+v_*^r +v_*^p + \epsilon v_*^r\\
    \geq&(\lambda_* + \epsilon)v^r +v^p\quad \hbox{in} \quad \Omega.
\end{aligned}
$$}

{On the other hand,
$$
\begin{aligned}
    -\Delta v =&-\Delta v_*\\
    =& \mu u_*^s +u_*^q\\
    =&(\mu -\delta )(u_* + \epsilon\varphi)^s + (u_* + \epsilon\varphi)^q+\\
    &\underbrace{+\delta(u_*+\epsilon\varphi)^s+\mu[u_*^s - (u_*+\epsilon\varphi)^s]+[u_*^q - (u_*+\epsilon\varphi)^q]}_{A}.
\end{aligned}
$$}

{The claim will follow once we show that $A\geq 0$. To prove this, we use \eqref{ineq:concv-convex} for $m=s$ and $m=q$ to find that
$$
\begin{aligned}
    A \geq & \delta(u_*+\epsilon\varphi)^s+\mu[u_*^s - (u_*+\epsilon\varphi)^s]+[u_*^q - (u_*+\epsilon\varphi)^q]\\
    \geq & \delta(u_*+\epsilon \varphi)^s -\mu C_s \epsilon^s\varphi^s - C_q\epsilon^q\varphi^q\\
    \geq & \delta u_*^s -\mu C_s \epsilon^s\varphi^s - C_q\epsilon^q\varphi^q\\
    \geq & \delta u_*^s -\epsilon^s \varphi^s\Big[\mu C_s + C_q\epsilon^{q-s}\|\varphi\|^{q-s}_{L^{\infty}(\Omega)}\Big].
\end{aligned}
$$}

{In view of \eqref{ineq:supersln_u}, $A\geq 0$ and this proves the claim.\\}

{We may now use the $(u,v)$ as supersolution for the pair \red{$(\lambda_*+\epsilon,\mu-\delta)$}. As subsolution we may use the pair $(\underline{u},\underline{v})$ described in \eqref{eqn:aux_system_sublinear} for the parameter $\lambda+\epsilon$. From Lemma \ref{lemma:theo1.4_I}, $\underline{u}\leq u$ and $\underline{v}\leq v$ in $\Omega$. Thus, proceeding as we did in Lemmas \ref{lemma:theo1.4_II}, \ref{lemma:theo1.4_III}, \ref{lemma:theo1.4_IV} and \ref{lemma:theo1.4_V}, or using Lemma 2.1 in (\cite{AGUDELORUFVELEZ2023}, pg 2908), we obtain the existence of a nontrivial solution of the system \eqref{HS1} for the pair \red{$(\lambda_*+\epsilon,\mu-\delta)$}, i.e., \red{$(\lambda_*+\epsilon,\mu-\delta)\in S$}. It also follows that $\lambda_*=\lambda_*(\mu-\delta)\geq\lambda_*+\epsilon$, which is a contradiction and this proves the claim. We have then proven that $\lambda_*$ is strictly decreasing.}

{A remark is in order. In the previous argument we used the assumption $q>s$, but the hypothesis $qr<1$ is not essential, once the existence of $u_*$ and $v_*$ has been stablished. Hence, a symmetric argument can be developed when switching the two equations.\\}

{Finally, we show the continuity of $\lambda_*$. The proof of the continuity of $\lambda_*$ from the left follows closely the lines of the proof of Lemma 7.7 and we leave the details to the reader. It is less obvious to show that $\lambda_*$ is continuous from the right in $(0,\infty)$. The proof of this statement is analogous to the proof of part {\it iv.} above. Let $\{\mu_n\}_n\subset (0,\infty)$ and $\mu>0$ be such  that 
$$
\hbox{for any }n\in \mathbb{N},\quad \mu\leq \mu_{n+1}< \mu_n \quad \hbox{and}\quad \mu_n\to \mu.
$$}  

{For $n\in \mathbb{N}$, set $\lambda_n:=\lambda_*(\mu_n)$ and $\lambda_*=\lambda_*(\mu)$. Observe that $\lambda_n< \lambda_{n+1}\leq \lambda_*$ so we may write $\lambda=\lim \limits_{n\to \infty}\lambda_n$. Clearly, $\lambda\leq \lambda_*$.}

{Now we prove that $\lambda=\lambda_*$. Using the above notations, let $u_*,v_*\in C^2_0(\overline{\Omega})$ be such that the pair $(u_*,v_*)$ is a solution of system \eqref{HS1} for the pair $(\mu,\lambda_*)$. Fix $\psi\in C^2_0(\overline{\Omega})$ such that $-\Delta \psi=u_*^s$ and $\psi>0$ in $\Omega$. Proceeding as we did to prove {\it iv.}, we find $\epsilon_0>0$ such that for any $\epsilon\in (0,\epsilon_0]$, there exists $\delta_{\epsilon}>0$ small such that for any $\delta\in (0,\delta_{\epsilon}]$, $u=u_*$ and $v=v_*+\delta \psi$ are such that $(u,v)$ is a super solution of the system \eqref{HS1}. This yields \red{$(\lambda_*-\epsilon,\mu+\delta)\in S$}.}

{Next, let $\epsilon\in (0,\epsilon_0)$ be arbitrary. If $n\in \mathbb{N}$ is large enough, then $\delta_n=\mu_n-\mu$ is such that $\delta_n\in (0,\delta_0)$ and thus \red{$(\lambda_*-\epsilon,\mu_n)=(\lambda_*-\epsilon,\mu+\delta_n)\in S$}. This in turn, implies that $\lambda_*(\mu_n)\geq \lambda_*-\epsilon$ and taking the limit as $n\to \infty$ we find that $\lambda\geq \lambda_*-\epsilon$. Since $\epsilon$ is arbitrary, we conclude that $\lambda=\lambda_*$ and this concludes the proof of the continuity of $\lambda_*$ from the right and concludes also the proof of the Theorem.} 
\QEDA

\medskip
{\underline{\it Proof of Theorem \ref{theo:Esp-caseMainTheo}}.
We only show that $\lambda_*(\mu)\to 0^+$ when $\mu\to \overline{\mu}$, since all the other properties of $\lambda_*$ follow directly from Theorem \ref{theo:bifur_diagram}. The fact that $\overline{\mu}\in (0,\infty)$ follows from Theorems 1.2 and 1.3 in \cite{AGUDELORUFVELEZ2023}. In any case, the following argument is based on the fact that $\overline{\mu}\in (0,+\infty)$, and that for $\lambda_*=\lambda_*(\overline{\mu})$,
$$
\overline{\mu}=\sup\{\mu\in (0,\infty)\,/\, (\lambda_*,\mu)\in S\}.
$$}

{Thus, let $\overline{\mu}\in (0,+\infty)$ and assume by contradiction that $\lambda_*=\lambda_*(\overline{\mu})\in (0,\infty)$.} 

{Using the hypotheses $qr<1$ and $ps<1$, symmetrical arguments to those developed throughout this work, allows us to consider for $\lambda \geq 0$, 
$M_{\lambda}:=\big\{\mu \in (0,\infty)\,:\, \red{(\lambda,\mu)}\in S \big\}$ and define
$$
\mu_*(\lambda):=\left\{ \begin{aligned}
\sup M_\lambda &, \quad \hbox{if }M_\lambda\neq \phi\\
0\quad &,\quad \hbox{if } M_\lambda=\phi.
\end{aligned}
\right.
$$}

{It holds that \red{$(\lambda_*,\mu_*(\lambda_*))\in S$} and $\overline{\mu}\leq \mu_*$. Hence, $\overline{\mu}=\mu_*(\lambda_*)$. Also, the fact that $\lambda_*>0$ implies that $\mu_*$ is constant on the interval $(0,\lambda_*]$ for $\lambda$. Using a symmetric argument as the one developed in the proof of Theorem \ref{theo:bifur_diagram}, to prove that $\lambda_*$ can no be constants on open intervals of $\mu$, yields a contradiction. We finish by commenting that from the definition of the sets $\Lambda_\mu$ and $M_{\lambda}$ it is verified that $\mu_*$ is the inverse function of $\lambda_*$. This completes the proof of the Theorem. 
\QEDA}

\bigskip
{\bf Acknowledgments.} The authors would like to express their gratitude to Professor Thomas Bartsch for enlightening conversations with the first author. These conversations motivated some of the developments herein presented. \\

The research of the first  author was supported by the Grant
CR 22-18261S of the Grant Agency of the Czech Republic. The research of the
third author was supported by Universidad Nacional de Colombia, Sede Medell\'{i}n,
Facultad de Ciencias, Departamento de Matem\'aticas, Grupo de Investigaci\'on en Matem\'aticas de la Universidad Nacional de Colombia Sede Medell\'in, Proyecto ``An\'alisis no lineal aplicado a problems mixtos en ecuaciones diferenciales parciales'', Fondo de Investigaci\'on de la Facultad de Ciencias, Hermes project code 60827.

\section{Appendix}

In this section we provide detailed proofs of some technical results used to set up the functional analytic framework and the variational structure of the BVP \eqref{4thorderBVP}-\eqref{eqn:navier_bd_cond}.

\vskip 3pt

{\it Proof of Lemma \ref{asymptoticpsizeroinfinity}.}
Clearly, given $\mu \in [0,\infty)$, $\psi(\mu,\cdot)$ is odd and strictly increasing with $\psi(\mu,0)=0$ and hence the inverse function theorem yields that $\psi(\mu,\cdot)\in C^1(\R-\{0\})$.

\vskip 3pt
For any $\theta \neq 0$, set $
\zeta = \psi(\mu, \theta)$, i.e., $\theta=g(\mu,\zeta)$. If either $\mu>0$ or $q<1$, we compute
$$
\partial_{\theta}\psi(\mu, 0)=\lim \limits_{\theta \to 0}\frac{\psi(\mu, \theta) - \psi(\mu,0)}{\theta} = \lim \limits_{\zeta \to 0}\frac{\zeta}{g(\mu,\zeta) - g(\mu,0)} = 0
$$ 
and notice that
$$
\partial_{\theta}\psi(\mu, \theta)= \left[\partial_{ \zeta}g(\mu, \psi(\mu,\theta))\right]^{-1} \quad \hbox{for} \quad \theta \neq 0.
$$

Therefore, $\psi(\mu, \cdot)\in C^{1}(\R)$, proving $(i)$ and $(ii)$.

To prove $(iii)$, it suffices to notice that
$$
\zeta g(\mu,\zeta)= \mu |\zeta|^{s+1}+ |\zeta|^{q+1}>0 \quad \hbox{for} \quad \zeta \neq 0.
$$

To prove $(iv)$, {for $\mu>0$} we first compute
$$
\begin{aligned}
\lim \limits_{|\theta|\to 0} \frac{\psi(\mu,\theta)}{|\theta|^{\frac{1}{s}-1}\theta}=& \lim \limits_{|\zeta|\to 0} \frac{\zeta}{|g(\mu,\zeta)|^{\frac{1}{s}-1}g(\mu,\zeta)}\\
=&\lim \limits_{|\zeta|\to 0} \frac{|\zeta|}{\left(\mu |\zeta|^{s}+ |\zeta|^{q}\right)^{\frac{1}{s}}}\\
=&\mu^{-\frac{1}{s}}.
\end{aligned}
$$

Proceeding similarly, using that $q>s$, we prove the {first} limit in $(iv)$.

\vskip 3pt
Finally, noticing that $\partial_{\zeta} g(\mu, \cdot)>0$ for $\zeta \neq 0$, a direct application of the {\it Implicit Function Theorem} to the function 
$$
\R\times \R \times \R \ni (\mu,\theta,\zeta) \mapsto g(\mu,\zeta) - \theta
$$ 
and its zero level set yields that $\psi\in C^{1}\left({(}0,\infty)\times \R\right)$. 
\QEDB

\vskip 3pt
{\it Proof of Lemma \ref{asymptoticsPSI}.} From Lemma \ref{asymptoticpsizeroinfinity} we readily verify $(i), (ii)$ and $(iii)$. Next, we prove  $(iv)$ and $(v)$. Write $\theta=\theta_++\theta_-$, where 
$$
\theta_+:=\max(0,\theta) \quad \hbox{and} \quad \theta_-:=\min(0,\theta).
$$

{Using that $\Psi$ is an even function, we find that $\Psi(\mu,|\theta|)=\Psi(\mu,\theta)$ for any $\theta \in \R$, thus proving $(iv)$.}

\vskip 3pt
Setting $\zeta =\psi(\mu,\theta)$, we find from the {\it Change of Variables Theorem} and integration by parts that
$$
\begin{aligned}
\Psi(\mu,\theta)=& \int^{\theta}_0 \psi(\mu,\hat{\theta})d \hat{\theta}= \int_{0}^{\zeta} \hat{\zeta} \partial_{\hat{\zeta}}g(\mu,\hat{\zeta})d\hat{\zeta}\\
=& \zeta g(\mu,\zeta) - \int_{0}^{\zeta}g(\mu,\hat{\zeta})d \hat{\zeta}.
\end{aligned}
$$

Therefore, for any $\theta \in \R$,
\begin{equation}\label{relatPsiG}
\Psi(\mu,\theta) + G(\mu, \psi(\mu,\theta)) =\psi(\mu,\theta)\theta.
\end{equation}

Using $(iv)$ from Lemma \ref{asymptoticpsizeroinfinity}, the identity \eqref{relatPsiG} and that $0<s<q$, {we have for $\mu>0$,}
$$
\begin{aligned}
\lim \limits_{|\theta| \to 0} \frac{\Psi(\mu, \theta)}{|\theta|^{\frac{s+1}{s}}}=& \lim \limits_{|\theta| \to 0} \frac{\psi(\mu,\theta)\theta}{|\theta|^{\frac{s+1}{s}}} - \lim \limits_{|\theta| \to 0} \frac{G(\mu,\psi(\mu,\theta))}{|\theta|^{\frac{s+1}{s}}}\\
=& \mu^{-\frac{1}{s}} - \lim \limits_{|\zeta| \to 0} \frac{\frac{\mu}{s+1}|\zeta|^{s+1} + \frac{1}{q+1}|\zeta|^{q+1}}{\left(\mu|\zeta|^s + |\zeta|^q\right)^{\frac{s+1}{s}}}\\
=&\frac{s}{s+1}\mu^{-\frac{1}{s}}.
\end{aligned}
$$

Proceeding in the same fashion, we compute 
$$
\begin{aligned}
\lim \limits_{|\theta| \to \infty} \frac{\Psi(\mu, \theta)}{|\theta|^{\frac{q+1}{q}}}=& \lim \limits_{|\theta| \to \infty} \frac{\psi(\mu,\theta)\theta}{|\theta|^{\frac{q+1}{q}}} - \lim \limits_{|\theta| \to \infty} \frac{G(\mu,\psi(\mu,\theta))}{|\theta|^{\frac{q+1}{q}}}\\
=& 1 - \lim \limits_{|\zeta| \to \infty} \frac{\frac{\mu}{s+1}|\zeta|^{s+1} + \frac{1}{q+1}|\zeta|^{q+1}}{\left(\mu|\zeta|^s + |\zeta|^q\right)^{\frac{q+1}{q}}}\\
=&\frac{q}{q+1}
\end{aligned}
$$
and this completes the proof of $(v)$.

\vskip 3pt
A direct calculation using \eqref{relatPsiG}, shows that
$$
\frac{\partial}{\partial \theta}\left[\frac{\Psi(\mu,\theta)}{\theta}\right]= \frac{\theta \psi(\mu,\theta)- \Psi(\mu,\theta)}{\theta^2}= \frac{G(\mu,\psi(\mu,\theta))}{\theta^2} >0
$$
for any $\theta \neq 0$, which proves $(vi)$.

\QEDB

\vskip 3pt
{\it  Proof of Lemma \ref{inequalitiespsi}.} Observe that
$$
\frac{1}{q+1}\zeta g(\mu,\zeta)  \leq G(\mu,\zeta) \leq \frac{1}{s+1}\zeta g(\mu,\zeta) \quad \hbox{for} \quad \zeta \in \R,
$$
which together with \eqref{relatPsiG}, taking $\zeta=\psi(\mu,\theta)$, yields \eqref{ComparisonInequality}.

Next, setting $\tilde{C}_{q,s}:= 1 + \frac{q(q-s)}{s(1-s)}$ and $\tilde{c}_{q,s}:= 1 + \frac{s(1-s)}{q(q-s)}$, we find that
$$
|\zeta|^{q+1}\leq g(\mu, \zeta)\zeta \leq \left\{
\begin{aligned}
\tilde{C}_{q,s}|\zeta|^{q+1},  &\quad \hbox{for} & |\zeta| \geq \zeta_\mu\\
\mu \tilde{c}_{q,s}|\zeta|^{s+1}, & \quad \hbox{for} & |\zeta| \leq \zeta_\mu.
\end{aligned}
\right.
$$

Thus, \eqref{growthpsi} follows with $\hat{C}_{q,s}:=\tilde{C}_{q,s}^{-\frac{1}{q}}$ and $\hat{c}_{q,s}:= \tilde{c}_{q,s}^{-\frac{1}{s}}$. 

\QEDB

\vskip 3pt
{\it Proof of Lemma \ref{technicalPSineq}.} Set $\zeta_i = \psi(\mu,\theta_i)$ for $i=1,2$ and assume w.l.o.g. that $\zeta_1 > \zeta_2$. Assume first that {$q> 1>s$}, from Young's inequality
\begin{equation}\label{youngineq1}
|\zeta_1|^{q-1}|\zeta_2|^{1-s}\leq \frac{q-1}{q-s}|\zeta_1|^{q-s} + \frac{1-s}{q-s}|\zeta_2|^{q-s}
\end{equation}
with a similar estimate for the term $|\zeta_1|^{1-s}|\zeta_2|^{q-1}$. Thus, we estimate  
$$
\begin{aligned}
|g(\mu,\zeta_1) - g(\mu,\zeta_2)|\leq & \mu \Bigl||\zeta_1|^{s-1}\zeta_1 - |\zeta_2|^{s-1}\zeta_2\Bigr|+\Bigl||\zeta_1|^{q-1}\zeta_1 - |\zeta_2|^{q-1}\zeta_2\Bigr|\\
\leq & C_s\mu|\zeta_1 - \zeta_2|^{s} + q\max(|\zeta_1|^{q-1}; |\zeta_2|^{q-1})|\zeta_1 - \zeta_2|
\end{aligned}
$$ 
and using \eqref{youngineq1},
\begin{equation}\label{ineq:hold_g_s}
|g(\mu,\zeta_1)-g(\mu,\zeta_2)|\leq C_{s,q} \left(\mu + |\zeta_1|^{q-s}+ |\zeta_2|^{q-s}\right)|\zeta_1 -\zeta_2|^s.
\end{equation}

Assume now that $0<s<q\leq 1$. It is readily checked that also in this case \eqref{ineq:hold_g_s} holds true.

\vskip 3pt
Since $g$ is strictly increasing, we conclude from the latter inequality that
$$
\begin{aligned}
(\zeta_1 - \zeta_2)(g(\mu,\zeta_1)-g(\mu,\zeta_2)) =& |\zeta_1 - \zeta_2||g(\mu,\zeta_1)-g(\mu,\zeta_2)|\\
\geq &c_{s,q}\frac{|g(\mu,\zeta_1) - g(\mu,\zeta_2)|^{\frac{s+1}{s}}}{\left(\mu + |\zeta_1|^{q-s}+ |\zeta_2|^{q-s}\right)^{\frac{1}{s}}}\\
\geq &\tilde{c}_{s,q}\frac{|g(\mu,\zeta_1) - g(\mu,\zeta_2)|^{\frac{s+1}{s}}}{\left(\mu^{\frac{1}{s}} + |\zeta_1|^{\frac{q-s}{s}}+ |\zeta_2|^{\frac{q-s}{s}}\right)}
\end{aligned}
$$
and \eqref{monotonicitypsi} follows. This completes the proof.

\QEDB

\bigskip

\end{document}